\newtheorem{thm}{Theorem}[section]
\newtheorem{lemma}[thm]{Lemma}
\newtheorem{prop}[thm]{Proposition}
\newtheorem{cor}[thm]{Corollary}
\newtheorem{defn}[thm]{Definition}
\newtheorem{rmk}[thm]{Remark}
\newcounter{EQNR}
\numberwithin{equation}{section} 
\def\Empty{}
\newcommand\oplabel[1]{
  \def\OpArg{#1} \ifx \OpArg\Empty {} \else
    \label{#1}
  \fi}
\long\def\realfig#1#2#3#4{
\begin{figure}[tp]
\centerline{\psfig{figure=#2,width=#4}} \caption[#1]{#3}
\oplabel{#1}
\end{figure}}
\begin{document}

\title{Heat kernel asymptotics on sequences of elliptically degenerating Riemann surfaces}
\author{D. Garbin and J. Jorgenson
%\footnote{
%The first author acknowledges support from a PSC-CUNY
%grant. The second author acknowledges support from grants from the
%NSF and PSC-CUNY. }
}
\date{22 Feb 2016}
\maketitle

\begin{flushright}
\emph{Dedicated to Dennis Hejhal,\\ on the occasion of his 67th belated happy birthday}\\
\end{flushright}

\begin{abstract}\noindent
This is the first of two articles in which we define an elliptically degenerating 
family of hyperbolic Riemann surfaces and study the asymptotic behavior of the
associated spectral theory.   Our study is motivated by a result from \cite{He 83}, 
which Hejhal attributes to Selberg, proving spectral accumulation for the family of 
Hecke triangle groups.  In this article, we prove various results regarding
the asymptotic behavior of heat kernels and traces of heat kernels for both real and complex
time.  In \cite{GJ 16}, we will use the results from this article and 
study the asymptotic behavior of numerous spectral functions through elliptic degeneration, 
including spectral counting functions, Selberg's zeta function, Hurwitz-type zeta functions,
determinants of the Laplacian, wave kernels, spectral projections, small eigenfunctions,
and small eigenvalues.  The method of proof we employ follows the template set in previous
articles which study spectral theory on degenerating families of finite volume Riemann 
surfaces (\cite{HJL 95}, \cite{HJL 97}, \cite{JoLu 97a}, and \cite{JoLu 97b}) and on degenerating
families of finite volume hyperbolic three manifolds (\cite{DJ 98}).  Although the types of results
developed here and in \cite{GJ 16} are similar to those in existing articles, it is necessary to
thoroughly present all details in the setting of elliptic degeneration in order to uncover all
nuances in this setting.  
\end{abstract}

\tableofcontents

\newpage

\section{Introduction}
\label{Introduction}
\hskip 0.2in
The last result stated by Hejhal in his monumental work \cite{He 83} is a qualitative
theorem, which he attributes to Selberg, concerning spectral accumulation at the bottom
of the range of the continuous spectrum for the family of Hecke triangle groups.  In brief, the present
article is the first in a series of two papers in which we further generalize and quantify the
Hejhal-Selberg theorem.  Before describing our work, let us discuss the theorem
from page 579 of \cite{He 83} which serves as motivation for our study.

\vskip .10in
\hskip 0.2in
For any integer $N \geq 3$, consider the two matrices
\begin{align*}
\left(\begin{matrix}0 & -1 \\ 1 & 0 \end{matrix}\right)
\,\,\,\,\,\textrm{and}\,\,\,\,\,
\left(\begin{matrix} 1 & 2 \cos(\pi/N) \\ 0 & 1 \end{matrix}\right).
\end{align*}
These matrices generate a discrete subgroup of $\text{\rm PSL}(2,\mathbb R)$, denoted
by $G_{N}$, and called the Hecke triangle group.  The group $G_{N}$ acts on
the upper half plane $\mathbb H$, and through elementary considerations one can show that
the set
\begin{align*}
\{z \in \mathbb H \,\,:\,\, \vert z \vert > 1 \,\,\,\,\,\textrm{and}\,\,\,\,\,|\textrm{Re}(z)| < \cos(\pi/N)\}
\end{align*}
is a fundamental domain for $G_{N} \backslash \mathbb H$. 

\vskip .10in
\hskip 0.2in
Let $\Delta$ denote the Laplacian associated to the hyperbolic metric on $\mathbb H$.  We will assume
knowledge of the spectral theory of the Laplacian acting on smooth functions on $G_{N} \backslash \mathbb H$,
referring the reader to \cite{He 83} and references therein for background material.  In particular, we
use that for each $N$, Weyl's law associated to the Laplacian consists of counting discrete eigenvalues 
of the Laplacian as well as poles of the determinant $\varphi_{N}(s)$ of the scattering matrix which is computed 
from the constant terms in the Fourier expansion of a non-holomorphic (parabolic) Eisenstein series. With this 
albeit brief description of background material, we can now state the Hejhal-Selberg result.

\vskip .10in
\hskip 0.2in
\emph{For any $t_{0} \in \mathbb R$ and $\delta > 0$, there is an $N_{0}$ such that if $N > N_{0}$, the
set $[1/2-\delta, 1/2) \times [t_{0}-\delta, t_{0} + \delta]$ will contain a pole of $\varphi_{N}$.  
}

\vskip .10in
\noindent
In words, the poles of $\varphi_{N}$ are densely accumulating along the line $\textrm{Re}(s) = 1/2$.  

\vskip .10in
\hskip 0.2in
At this time, there is a number of natural questions which one can pose.  
Firstly, can one further quantify the Hejhal-Selberg result beyond the assertion
regarding accumulation of poles of $\varphi_{N}$?  Observe that the above
stated fundamental domains converge, as $N$ tends to infinity, to a region which is a fundamental 
domain for the discrete group generated by
\begin{align*}
\left(\begin{matrix}0 & -1 \\ 1 & 0 \end{matrix}\right)
\,\,\,\,\,\textrm{and}\,\,\,\,\,
\left(\begin{matrix} 1 & 2  \\ 0 & 1 \end{matrix}\right).
\end{align*}
Does the asymptotic behavior of the spectral theory of $G_{N}\backslash \mathbb H$, 
in whatever form, converge to the spectral theory of the limiting fundamental domain?  Finally,
is the Hejhal-Selberg theorem indicative of a general phenomena which exists for other sequences
of geometric objects?  

\vskip .10in
\hskip 0.2in
The purpose of this article and the subsequent paper \cite{GJ 16} is to explore in detail 
these questions.  We begin, in this article, by defining an elliptically degenerating family
of hyperbolic Riemann surfaces, one example of which is the sequence of Hecke triangle groups.
In effect, an elliptically degenerating sequence is obtained by choosing a fixed hyperbolic Riemann
surface $M$ with a finite set of points $P_{1}, \cdots, P_{q}$, and then we change the local
coordinate at each point $P_{j}$ from a complex variable $z$ to $z^{1/n_{j}}$, and then we let each
$n_{j}$ approach infinity.  

\vskip .10in
\hskip 0.2in
Having established the general setting which we will study, we then focus our attention to 
the heat kernel associated to the hyperbolic Laplacian $\Delta$ which acts on smooth functions on
the underlying surface.  In the present article, we prove a number of results corresponding to the
asymptotic behavior of the heat kernel, and traces of the heat kernel, on an elliptically degenerating family
of Riemann surfaces.  In addition to pointwise convergence results, we study the asymptotic
behavior of the trace of the heat kernel for small time, large time, and complex time, and in
each case we establish asymptotic expansions in the degenerating parameters with attention paid to uniformity
issues.  In the present article, we slightly deviate from the heat kernel itself by proving convergence
of the small eigenvalues and small eigenvalues, and we do so in order to strengthen the convergence
results we derive.

\vskip .10in
\hskip 0.2in
In the subsequent article \cite{GJ 16}, we use the heat kernel convergence results proved here to
study the asymptotic behavior of numerous spectral functions through elliptic degeneration,
including spectral counting functions, Selberg's zeta function, Hurwitz-type zeta functions,
determinants of the Laplacian, wave kernels, spectral projections, small eigenfunctions,
and small eigenvalues.  At that time, a corollary of our more general theorem will be a quantitative
version, with error term, of the Hejhal-Selberg result, thus answering one of the questions posed above.

\vskip .10in
\hskip 0.2in
The method of proof we employ follows the template set in previous
articles which study spectral theory on degenerating families of finite volume Riemann
surfaces (\cite{HJL 95}, \cite{HJL 97}, \cite{JoLu 97a}, and \cite{JoLu 97b}) and on degenerating
families of finite volume hyperbolic three manifolds (\cite{DJ 98}).  Although the types of results
developed here and in \cite{GJ 16} are similar to those in existing articles, it is necessary to
thoroughly present all details in the setting of elliptic degeneration in order to uncover all
nuances in this setting.  Additionally, we did not believe it would be ``mathematically honest'' to
simply assert that the methodology of these articles applies in the setting of elliptically degenerating
surfaces, so in addition to proving a translation of the method to the setting of elliptically degenerating
surfaces, we felt it necessary to provide verification of details.

\vskip .10in
\hskip 0.2in
Aside from its interest within the field of spectral analysis, the problem of studying elliptically 
degenerating Riemann surfaces has manifested itself elsewhere.  In \cite{vP 10}, von Pippich defined and
studied elliptic Eisenstein series, ultimately proving analogues of the classical theorems, namely differential
equation, meromorphic continuation, and Kronecker limit formula.  In \cite{GvP 09}, the authors studied
elliptic Eisenstein series through elliptic degeneration.  One of their main result was to prove that 
certain elliptic Eisenstein series, when rescaled, converge to parabolic Eisenstein series through elliptic
degeneration.  We refer to \cite{vP 10} and \cite{GvP 09} for more precise statements and proofs.  

\vskip .10in
\hskip 0.2in
Recently, Freixas i Montplet and von Pippich have undertaken a fascinating project involving elliptic degeneration.  Let $\mathcal{M}_{g,n}$ denote the moduli space of genus $g$ hyperbolic Riemann surfaces with $n$ marked points. 
The Takhtajan--Zograf form $\omega_{\mathrm{TZ}}$ on $\mathcal{M}_{g,n}$ is constructed using (parabolic)
Eisenstein series associated to the marked points. The relation of $\omega_{\mathrm{TZ}}$ to the Weil--Petersson 
form $\omega_{\mathrm{WP}}$ is given by a local index theorem which can be derived from an arithmetic
Riemann--Roch isometry.  In \cite{FvP 11}, the authors are studying  an analogue of $\omega_{\mathrm{TZ}}$ on the moduli space of Riemann surfaces with $n$ marked weighted points using  elliptic Eisenstein series. Initially, 
they are striving towards an arithmetic Riemann--Roch isometry, the computation of the relative determinant of the
Laplacian on an hyperbolic cone.  Having established the isometry, in the sense of Arakelov theory, the authors
plan to work toward showing a ``consistency'' with their work and previously established isometries by studying
their identity through elliptic degeneration.  Consequently, the significance of the analysis in the present
paper goes beyond the applications we develop in \cite{GJ 16}.  

\vskip .10in
\hskip 0.2in
The project of defining and studying hyperbolic spectral theory through elliptic degeneration was initiated in the early 1990's by
the second named author (JJ) when he was collaborating with R. Lundelius.  Indeed, the project was referred
to as ``in preparation'' in several previous publications.  Unfortunately, the distinction of ``in preparation''
was very premature.  By 1995, Lundelius left academic mathematics, and by 1997, Jorgenson had completed all aspects of papers which were published as jointly written with Lundelius.  Approximately ten years later, 
the first named author (DG) developed the question of elliptic degeneration as part of his own investigations.
The present article as well as \cite{GJ 16} are the product of the consequent collaboration of the two authors
of the articles.

\section{Geometry of elliptic degeneration}
\label{Geometry of elliptic degeneration}

\hskip 0.2in In this section we will present the notion of elliptic degeneration of hyperbolic Riemann surfaces. We will consider finite volume surfaces (compact or non-compact), having elliptic fixed points of finite order, i.e. surfaces with conical ends. Elliptic degeneration occurs when the orders of such elliptic fixed points are increasing without a bound: As these orders are running off to infinity, their corresponding cone angles approach zero. Alternatively, as elliptic elements of the fundamental group become parabolic, their corresponding conical ends turn into cusps.  

\hskip 0.2in Let $M$ be a connected hyperbolic Riemann surface of finite volume, either compact or non-compact. For simplicity, let us assume that $M$ is connected, so then $M$ can be realized as the quotient manifold $\Gamma \backslash \mathbb H$, where $\mathbb H$ is the hyperbolic upper half space and $\Gamma$ is a discrete subgroup of $\textrm{SL}(2,\mathbb R) \slash \{ \pm1 \}$.

\hskip 0.2in Aside from the identity, the elements of $\Gamma$ can be classified into three classes, according to the type of their fixed points when viewed as fractional linear transformations or equivalently to the value of their absolute trace if viewed as matrices.  An element $\gamma \in \Gamma$ is called hyperbolic, parabolic, or elliptic,  if $\gamma$ is conjugated in $\textrm{SL}(2,\mathbb R)$ to a dilation, horizontal translation,  or rotation respectively. This is analogous to $|\textrm{Tr}(\gamma)|$ being greater than, equal, or less than 2, respectively.  Furthermore, an element $\gamma$ is called primitive, if it is not a power other than $\pm1$ of any other element of the group. With this in mind, a primitive hyperbolic element $\gamma$ is conjugated to $\left(\begin{array}{cc}
e^{\ell_{\gamma}/2} & 0 \\ 0&e^{-\ell_{\gamma}/2} \end{array}\right)$, where $\ell_{\gamma}$ is the length of the simple closed geodesic on the surface $M$ in the homotopy class of $\gamma$. 
A primitive parabolic element $\gamma$ is conjugated to  $\left(\begin{array}{cc} 1 & w_{\gamma} \\ 0& 1 \end{array}\right)$, where $w_{\gamma}$ denotes the width of the cusp fixed by $\gamma$, while a primitive elliptic element  $\gamma$ is conjugated to 
$\left(\begin{array}{cc}
\cos(\pi/q_{\gamma}) & \sin(\pi/q_{\gamma}) \\ -\sin(\pi/q_{\gamma})&\cos(\pi/q_{\gamma}) \end{array}\right)$, where $2\pi/q_{\gamma}$ is the angle of the conical point fixed by $\gamma$. The positive integer $q_{\gamma}$ is the order of the centralizer subgroup of the elliptic element $\gamma$. We will say that the corresponding elliptic fixed point has order $q_{\gamma}.$

%%%%%%%%%%%%%%%%%%%%%%%%%%%%%%%% Figure 1
\realfig{singlecone}{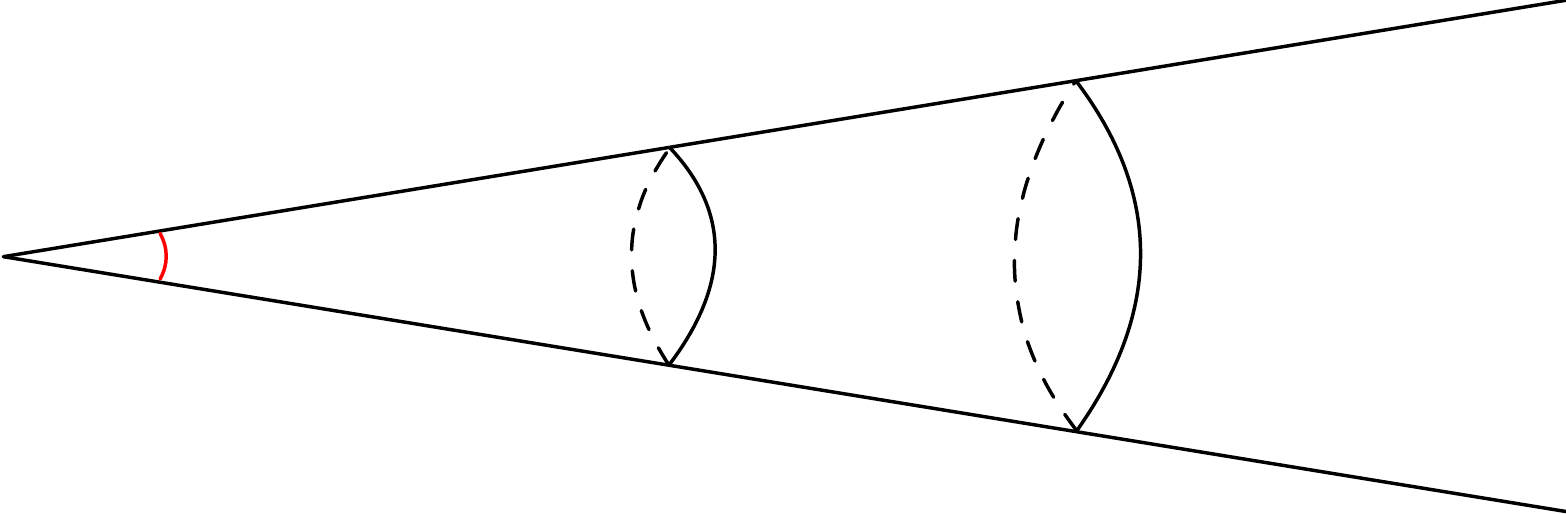}{The geometry of the hyperbolic cone $C_q$ 
$$ {
 \at{-8\pscm}{2.8\pscm}{$\alpha = \frac{2\pi}{q}$}
 \at{-4.8\pscm}{3.3\pscm}{$\rho_1$} 
 \at{-2.5\pscm}{3.7\pscm}{$\rho_2$} 
}
$$}{9cm}
%%%%%%%%%%%%%%%%%%%%%%%%%%%%%%%%

\hskip .20in For a given positive integer $q$, let $C_q$ denote
the infinite hyperbolic cone of angle $2\pi/q$. One can realize $C_q$ 
as a half-infinite cylinder
\begin{equation}\label{hyperbolicpolarcoordinates}
C_{q} = \{ (\rho, \theta): \rho > 0, \theta \in [0,2\pi)\}\,\,.
\end{equation}
equipped with a smooth metric such that the length of meridians of constant $\rho$, which we denote here by $\ell(\rho)$, goes to zero as $\rho$ approaches zero (see Fig. \ref{singlecone}). Consider the Riemannian metric given by
\begin{equation}
ds^2 = d\rho^2 + q^{-2} \sinh^2 (\rho) d\theta^2\,\,.
\end{equation}
It easily follows that $\ell(\rho) = \cfrac{2\pi}{q}\sinh(\rho)\,\,.$ The notion of angle is defined as the rate of change in the length of the circle with respect to the change in distance near $\rho=0$, namely 
\begin{align*}
\lim_{\rho_1 \to 0} \left[\lim_{\rho_2 \to \rho_1} \frac{\ell(\rho_2)-\ell(\rho_1)}{d(\rho_2,\rho_1)}\right]\,\,,
\end{align*}
provided that the limit exists. That $d(\rho_2,\rho_1) = |\rho_2-\rho_1|$, implies that $C_{q}$ is an infinite hyperbolic cone of angle $2\pi/q$ with apex at $\rho=0.$
It is elementary to show that the volume form of the manifold $C_{q}$ is given by 
\begin{equation}
d\mu = q^{-1} \sinh (\rho) d\rho d\theta.
\end{equation}

\hskip 0.2in A fundamental domain for $C_{q}$ in the hyperbolic unit disc model is
provided by a sector with vertex at the origin and with angle
$2\pi/q$. In coordinates, we write $\{ \alpha \exp(i\phi): 0 \le
\alpha < 1, 0 \le \phi < 2\pi/q\}$. The hyperbolic metric on $C_{q}$
is the metric induced onto the fundamental domain viewed as a subset of
the unit disc endowed with its complete hyperbolic metric.  The
isotropy subgroup which corresponds to this fundamental domain
consists of the numbers $\exp(2\pi i k/q) $ for $k = 1,2, \cdots ,q$
acting by multiplication. A direct computation shows if $z = (\rho,
\theta)$ and $\gamma_{q}$ denotes a generator of the fundamental group
of $C_{q}$, then

\begin{equation}\label{beardon}
\cosh d(z,\gamma_{q}^n z) = 1+ 2 \sin^2(\pi n /q)\sinh^2 (\rho).
\end{equation}

\hskip .20in Let $C_{q,\varepsilon}$ denote the submanifold of
$C_{q}$ obtained by restricting the first coordinate of $(\rho,
\theta)$ to $0 \le  \rho < \cosh^{-1}(1+\varepsilon q/2\pi)$. A
fundamental domain for $C_{q,\varepsilon}$ in the unit disc model is
obtained by adding the restriction that
$\alpha < (\varepsilon q/(4\pi+\varepsilon q))^{1/2}.$ An elementary
calculation shows that the volume of this manifold
$\textrm{vol}(C_{q,\varepsilon}) = \varepsilon$, and the length of
the boundary of $C_{q,\varepsilon}$ is $(4\pi \varepsilon/q +
\varepsilon^2)^{1/2}$. For $\varepsilon_1 < \varepsilon_2$ one can
show that the length between the boundaries of the two nested cones
$C_{q,\varepsilon_1}$ and $C_{q,\varepsilon_2}$ is
\begin{equation*}
d_{\mathbb{H}}(\partial C_{q,\varepsilon_1}, \partial
C_{q,\varepsilon_2}) = \log \Bigg(\frac{\varepsilon_2 q + 2\pi +
\sqrt{\varepsilon_2 q(4\pi+\varepsilon_2q)}} {\varepsilon_1 q + 2\pi
+ \sqrt{\varepsilon_1 q(4\pi+\varepsilon_1q)}}\Bigg).
\end{equation*}

\hskip .20in Let $C_{\infty}$ denote an infinite cusp. A
fundamental domain for $C_{\infty}$ in the upper half plane is given
by the set $\{x+iy : y > 0, 0 < x < 1\}$. A fundamental domain for $C_{\infty}$
in the upper half plane is obtained
by identifying the boundary points $iy$
with $1+iy$. The isotropy subgroup that corresponds to the above
fundamental domain consists of $\mathbb{Z}$ acting by addition.
As before, let $C_{\infty,\varepsilon}$ denote the submanifold of
$C_{\infty}$ obtained by restricting the $y$ coordinate of the
fundamental domain given above to $y > 2\varepsilon$. Easy
computations show that $\textrm{vol} (C_{\infty,\varepsilon}) =
\varepsilon/2$, and the length of the boundary of
$C_{\infty,\varepsilon}$ is also $\varepsilon/2$.

%%%%%%%%%%%%%%%%%%%%%%%%%%%%%%%% Figure 2
\realfig{doublecones}{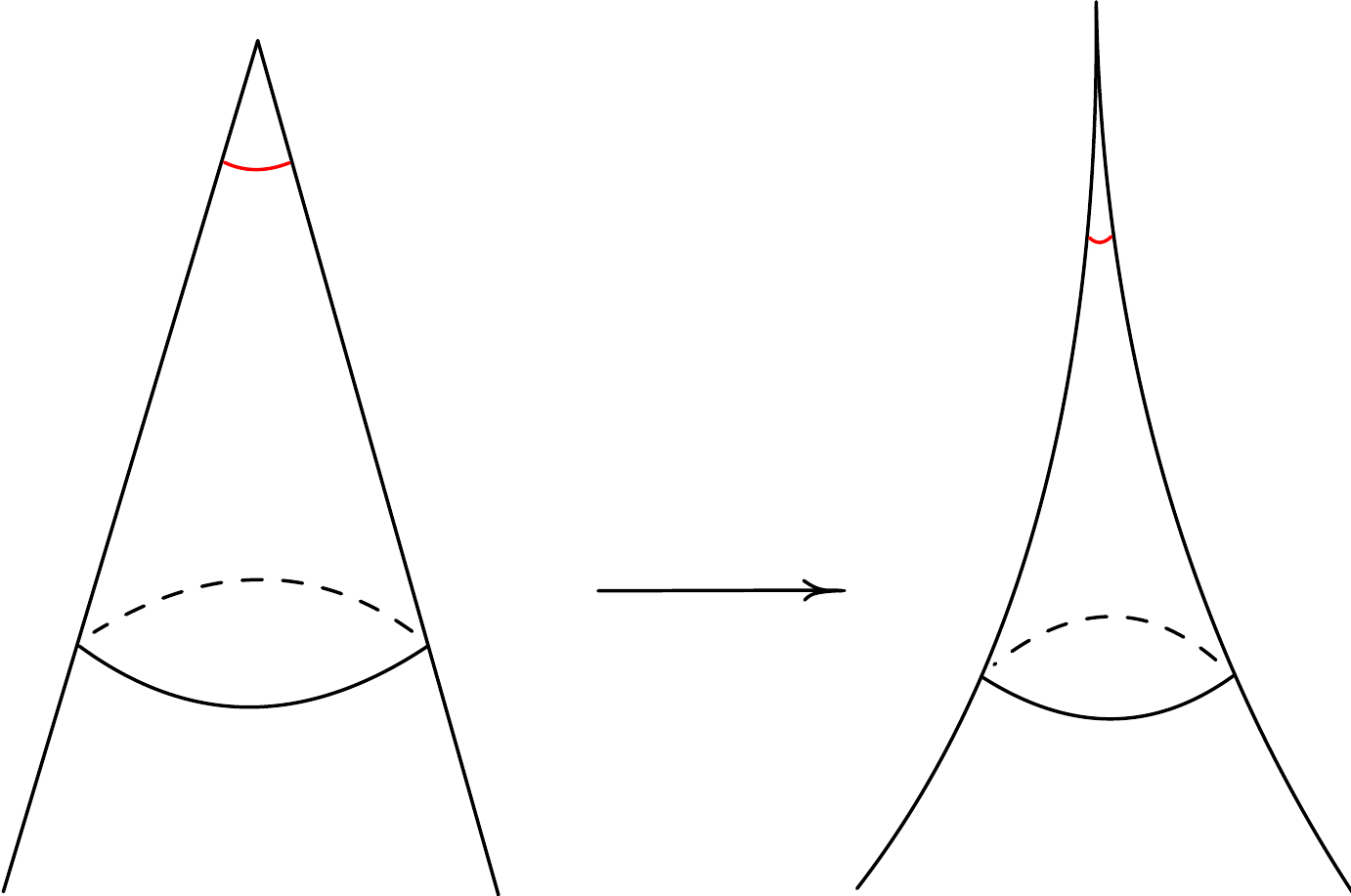}{The elliptic degeneration of a hyperbolic cone into a cusp 
$$ {
 \at{-7.3\pscm}{5.6\pscm}{$\alpha = \frac{2\pi}{q}$}
 \at{-3.5\pscm}{5.6\pscm}{$\alpha = 0$} 
 \at{-5.6\pscm}{3.3\pscm}{$q \to \infty$} 
 \at{-9.4\pscm}{4.2\pscm}{$C_q$} 
 \at{-2.2\pscm}{4.2\pscm}{$C_{\infty}$} 
}
$$}{9cm}
%%%%%%%%%%%%%%%%%%%%%%%%%%%%%%%%

\hskip 0.2in Following Section 2 of \cite{Ju 98}, we are now ready to define the concept of elliptic degeneration. In its quintessential form, elliptic degeneration turns a cone of finite order $q$ into a cone of infinite order, i.e. a cusp (see Fig. \ref{doublecones}). To view this, we realize the positive angle cone $C_q$ as the half-infinite cylinder $\{ (x,y) : x\in [0,1), y\in (0,\infty)\}$, by changing the $(\rho,\theta)$ coordinates in (\ref{hyperbolicpolarcoordinates}) as $\theta = 2\pi x$ and $\rho = 2 \tanh^{-1}(e^{-\alpha y})$, where $\alpha = 2\pi /q.$ In $(x,y)$ coordinates, $C_q$ is a cone of angle $\alpha = 2\pi/q$ with apex at $y=\infty$, equipped with the Riemannian metric 
\begin{align*}
ds_{q}^{2} = \frac{dx^2+dy^2}{\alpha^{-2}\sinh^2(\alpha y)}\,\,.
\end{align*}
As the order $q$ goes to infinity, or equivalently as the angle $\alpha$ goes to zero, the cone $C_q$ turns into the cusp $C_{\infty}$ with metric given by 
\begin{align*}
ds_{\infty}^{2} = \frac{dx^2+dy^2}{y^2}\,\,.
\end{align*}

\hskip .20in To turn several cones into cusps, we proceed as follows. Let $q = (q_1,q_2, \cdots, q_m)$ be a
vector of the orders of elliptic fixed points. In this case we
define $C_{q} = \cup_{k=1}^{m} C_{q_k}$. We similarly define $C_{q,
\varepsilon}$ as a union over the components of
$q$. We say that the vector $q$ approaches infinity if and
only if each of its components approach infinity. 
%It will be
%convenient to define
%\begin{equation*}
%|q| = \min_{1 \le k \le m} q_k
%\end{equation*}
Consequently, the Riemannian manifold $C_{q}$ (with $m$ connected
components) converges to $m$ copies of the limit Riemannian manifold
$C_{\infty}$ as $q \to \infty$. Similarly, $C_{q,
\varepsilon}$ converges to $m$ copies of $C_{\infty,\varepsilon}$.
We shall write these limits as $m \times C_{\infty}$ and $m \times
C_{\infty,\varepsilon}$.

\hskip .20in With these in mind, let us make the following definition.
\begin{defn}
A family of finite volume hyperbolic surfaces
$M_{q}$ parametrized by the $m$-vector $q$ will be called
an elliptically degenerating surface if it has the following
properties (see Fig. \ref{ed4}):
\begin{enumerate}[a)]
\item For any $\varepsilon < 1/2$, the surface $C_{q, \varepsilon}$ (with $m$ components) embeds
isometrically into $M_{q}$.
\item As $q \to \infty$, $M_{q}$ converges to a complete, hyperbolic surface
$M_{\infty}$ in the following sense. The surface $M_{\infty}$ contains $m$
embedded copies of $C_{\infty,\varepsilon}$ which is the limit of
$C_{q, \varepsilon} \subset M_{q}$. The geometry of
$M_{q} \backslash C_{q, \varepsilon}$ converges to the
geometry of $M_{\infty} \backslash (m \times
C_{\infty,\varepsilon})$.
\end{enumerate}
\end{defn}

\begin{rmk} 
\emph{In the above definition, $m \times
C_{\infty,\varepsilon}$ refers to the ``new'' cusps of $M_{\infty}$,
that is, the cusps which developed from degeneration. In particular,
for every $q$, it is possible to identify points $x(q)$ and $y(q)$ on
$M_{q} \backslash C_{q, \varepsilon}$ such that
$\lim_{q \to \infty} d_{q}(x(q), y(q)) =
d_{\infty}(x(\infty), y(\infty))$. Henceforth, we shall suppress the
$q$ dependence of points which are identified during
degeneration and simply write $x$ and $y$. The volume forms induced
by the converging metrics also converge uniformly on $M_{q}
\backslash C_{q, \varepsilon}$, and all such measures are
absolutely continuous with respect to each other. In general, the
hyperbolic volume form occurring in an integral will be denoted by
$d\mu$ with an appropriate subscript when needed (for example,
$d\mu_{q}$). Length measure will be denoted by $d\rho$.
}
\end{rmk}

\hskip .20in The description of the degeneration of $M_{q}$ to
the limit surface $M_{\infty}$ also applies to the degeneration of
$C_{q}$ and $C_{q, \delta}$ (with $\varepsilon < \delta$)
to their limit surfaces, $m \times C_{\infty}$ and $m \times
C_{\infty, \delta}$ respectively.

%\hskip .20in To formulate statements which apply to both
%$M_{q}$ and $C_{q}$ we shall use the notation $R_{q}$
%to denote either sequence of surfaces.

%%%%%%%%%%%%%%%%%%%%%%%%%%%%%%%% Figure 3
\realfig{ed4}{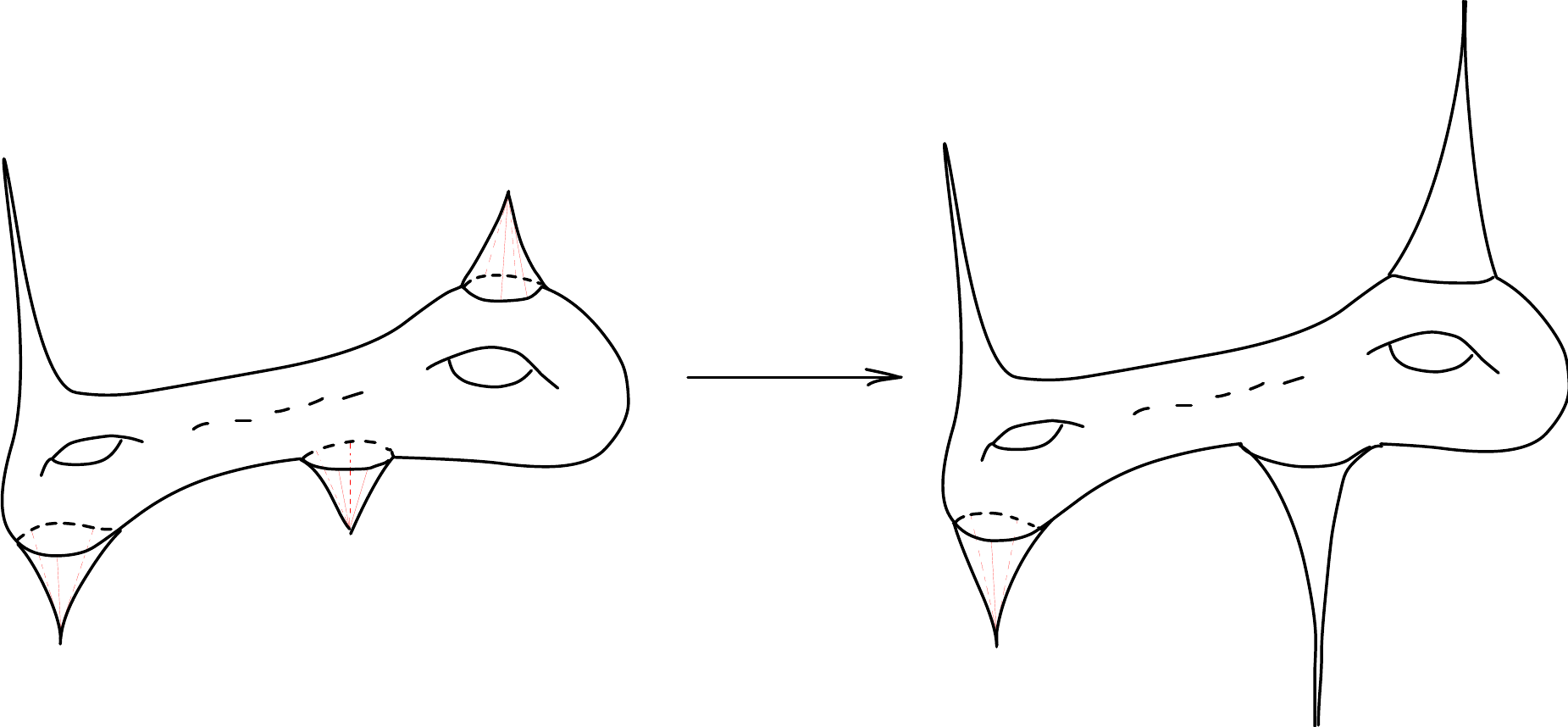}{Elliptic degeneration of
$q_1$ and $q_2$ $$ {
 \at{-5.83\pscm}{1.5\pscm}{$\frac{2\pi}{q_1}$}
 \at{-5.1\pscm}{4.2\pscm}{$\frac{2\pi}{q_2}$}
 \at{-7.73\pscm}{0.8\pscm}{$\frac{2\pi}{q_3}$}
 \at{-4.15\pscm}{3\pscm}{$q \to \infty$}
 \at{-8.8\pscm}{3\pscm}{$M_{q}$}
 \at{1\pscm}{3\pscm}{$M_{\infty}$}
}
$$}{9cm}
%%%%%%%%%%%%%%%%%%%%%%%%%%%%%%%%

\hskip 0.2in The next result is the main theorem of \cite{Ju 98}. 

\begin{prop}\label{Judge}
Let $M$ be a surface with Euler characteristic $\chi(M)$, having one conical neighborhood $E$. Let $[g]$ be a pointwise conformal class of hyperbolic metrics on $M$. For each number $\alpha \in \big[0,2\pi(1-\chi(M))\big)$, let $g_{\alpha}$ be the unique hyperbolic metric on $M$ such that $(E,g_{\alpha})$ is a hyperbolic cone of angle $\alpha.$ Define the conformal factor $w_{\alpha}:M \mapsto \mathbb{R}$ where $g_{\alpha}=e^{w_{\alpha}}g_0.$ Then the map $\alpha \mapsto w_{\alpha} \in C^{\infty}(M)$ is real analytic.
\end{prop}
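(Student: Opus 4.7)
The plan is to realize $w_\alpha$ as the solution of a nonlinear elliptic PDE depending real-analytically on the parameter $\alpha$, and then to invoke the real-analytic implicit function theorem on an appropriate Banach space. The starting point is the Liouville-type equation. Requiring $g_\alpha = e^{w_\alpha} g_0$ to have constant Gaussian curvature $-1$ on $M \setminus \{p\}$, where $p$ denotes the apex of $E$, amounts (in the positive convention for $\Delta_{g_0}$) to
\begin{equation*}
\Delta_{g_0} w_\alpha + 2 K_{g_0} + 2\, e^{w_\alpha} \;=\; 0 \quad \text{on } M \setminus \{p\},
\end{equation*}
subject to the asymptotic condition that, in a conformal coordinate $z$ centered at $p$, one has $w_\alpha(z) = c(\alpha) \log |z| + O(1)$, where $c(\alpha)$ is an affine function of $\alpha$ determined by matching the cone angle of $g_\alpha$ against that of $g_0$.

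First I would peel off the model singularity. Choose a smooth cutoff $\chi$ supported in a conformal coordinate disk around $p$ and equal to $1$ on a smaller disk, and set $\psi_\alpha(z) := c(\alpha)\, \chi(z) \log|z|$. Since $c(\alpha)$ is affine in $\alpha$, the map $\alpha \mapsto \psi_\alpha$ is manifestly real-analytic. Writing $w_\alpha = \psi_\alpha + u_\alpha$ and substituting reduces the problem to finding, on all of $M$, a function $u_\alpha$ satisfying
\begin{equation*}
F(\alpha, u_\alpha) \;:=\; \Delta_{g_0} u_\alpha + \Delta_{g_0}\psi_\alpha + 2 K_{g_0} + 2\, e^{\psi_\alpha + u_\alpha} \;=\; 0.
\end{equation*}
In a weighted H\"older or $b$-type Sobolev space adapted to the cone at $p$ (in the sense of Mazzeo--Melrose, as used in \cite{Ju 98}), $F$ defines a real-analytic map of $(\alpha, u)$; the delicate term is $e^{\psi_\alpha} = |z|^{c(\alpha) \chi(z)}$, which is locally integrable for every admissible $\alpha$ and depends analytically on $\alpha$ in the relevant weighted norm.

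Next I would apply the real-analytic implicit function theorem at an arbitrary $\alpha_0 \in [0, 2\pi(1-\chi(M)))$, where a solution $w_{\alpha_0}$ is furnished by hypothesis. The linearization of $F$ in $u$ at this base solution,
\begin{equation*}
D_u F \big|_{(\alpha_0, u_{\alpha_0})} h \;=\; \Delta_{g_0} h \;+\; 2\, e^{w_{\alpha_0}}\, h,
\end{equation*}
is a formally self-adjoint elliptic operator on $M$ with strictly positive zeroth-order coefficient. Injectivity follows immediately from the maximum principle (equivalently, from strict positivity of its quadratic form), and combined with elliptic Fredholm theory in the chosen weighted class on the compact surface $M$ this forces the operator to be an isomorphism. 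The real-analytic implicit function theorem then yields a real-analytic local family $\alpha \mapsto u_\alpha$ around $\alpha_0$, and real-analyticity of $\alpha \mapsto w_\alpha = \psi_\alpha + u_\alpha$ follows since $\psi_\alpha$ is itself real-analytic in $\alpha$. As $\alpha_0$ was arbitrary, real-analyticity holds throughout the full interval.

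The main technical obstacle is the choice of function space that simultaneously (i) accommodates the logarithmic singularity of $w_\alpha$ at the cone point, (ii) renders the exponential nonlinearity $u \mapsto e^{\psi_\alpha + u}$ an analytic map between Banach spaces uniformly in $\alpha$, and (iii) makes the linearized Liouville operator Fredholm with bounded inverse. Weighted H\"older or $b$-Sobolev spaces tailored to the conical structure, as developed by Mazzeo--Melrose and employed by Judge in \cite{Ju 98}, are the natural framework in which these three requirements can be reconciled; the a priori subtraction of the explicit model $\psi_\alpha$ is what converts the singular analysis, once the logarithm is removed, into ordinary elliptic analysis on the compact surface $M$.
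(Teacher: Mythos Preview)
Your proposal is correct and follows essentially the same approach as the paper's sketch (which in turn summarizes Judge's argument in \cite{Ju 98}): both set up the curvature equation as a nonlinear elliptic PDE for the conformal factor, check that the Fr\'echet derivative of the associated operator between suitable Banach spaces is invertible, and conclude via the real-analytic implicit function theorem. You supply more detail than the paper's brief ``Idea of proof''---notably the explicit Liouville equation, the subtraction of the model singularity $\psi_\alpha$, and the identification of the linearization as $\Delta_{g_0} + 2e^{w_{\alpha_0}}$ together with its invertibility via positivity---but the skeleton of the argument is the same.
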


\emph{Idea of proof.} 
Consider hyperbolic metrics $g$ on $M$ for which $(E,g)$ embeds into $(\mathbb{S}^1 \times (0,\infty),m_{\alpha})$, where  the Riemannian metric is given by
\begin{align*}
m_{\alpha} = \frac{d\theta^2+dy^2}{\alpha^{-2}\sinh^2(\alpha y)}\,\,,
\end{align*}
that is an infinite volume hyperbolic cone of angle $\alpha.$ 

\hskip 0.2in By the Gauss-Bonnet formula and the negativity of the curvature, it follows that $\alpha -2\pi +2\pi\chi(M) <0\,\,.$ In \cite{McO 88}, the author shows that this inequality implies the existence of such a metric $g$ which is determined by its conformal structure and the angle $\alpha$. This gives a bijection between the interval $\big[0,2\pi(1-\chi(M))\big)$ and the set of conical hyperbolic metrics in a given conformal class, by sending $\alpha$ to $g_{\alpha}\,\,.$

\hskip 0.2in Fix an angle $\alpha_0$ in the above interval. Consider the conformal factor $w_{\alpha}:M \mapsto \mathbb{R}\,\,,$ satisfying $g_{\alpha} = e^{2w_{\alpha}} g_{\alpha_0}\,\,.$ The map $w_{\alpha}$ is then part of the zero set of a carefully chosen elliptic second order differential operator between Banach spaces whose Frechet derivative has bounded inverse. The implicit function theorem between Banach spaces is the used to show that the map $\alpha \mapsto w_{\alpha}$ is real analytic.  
\mbox{}\qed

\begin{rmk}
\emph{
The above proposition states that given a finite volume hyperbolic surface $M_{\infty}$ with $p$ cusps, there exists a family of hyperbolic surfaces $\{M_{q}\}$, with $p - m$ cusps indexed by the $m$-tuple
$q$ such that
\begin{equation*}
\lim_{q \to \infty } M_{q} = M_{\infty}
\end{equation*}}
\end{rmk}

\section{Regularized heat traces}
\label{Regularized heat traces}

\hskip 0.20in In this section we establish integral representations
for the hyperbolic and elliptic heat traces for complex valued time.
We then define what we call a regularized trace of the heat kernel.
If the hyperbolic Riemann surface is compact, then the regularized
trace must agree with the trace of the heat kernel. The non-compact
case comes with parabolic elements whose contribution to the trace
is unbounded. In such case, we need to subtract the contribution of
the parabolic elements, i.e. to regularize the trace of the heat
kernel. The section ends with two remarks. The first remark relates
our expression for the elliptic heat trace with the expression that
is most common in the literature (\cite{Ku 73}, \cite{He 76}.) The second remark presents in brief how the results in this
section lead to the Selberg trace formula. The staples of this
section are the periodization of the heat kernel on a hyperbolic
Riemann surface of finite volume together with the integral
expression for the heat kernel in the upper half plane.

\hskip 0.2in
Let $\Delta_M$ denote the Laplace operator on the surface $M$. Consider the heat operator $\Delta_M + \partial_t$ acting on functions $f : M\times {\mathbb R}^{+} \mapsto \mathbb R$ which are $C^2(M)$ and $C^1({\mathbb R}^{+})$. Then the heat kernel associated to $M$ is the minimal integral kernel which inverts the heat operator. Namely, the heat kernel is a function $K_M: \mathbb{R} \times M \times M \mapsto \mathbb R$ satisfying the following conditions. For any bounded function $f\in C^2(M)$ consider the integral transform
\begin{align*}
u(t,x) = \int_M K_M(t,x,y)f(y)d\mu_M(y)\,\,.
\end{align*}
Then the following differential and initial time conditions are met
\begin{align*}
\Delta_x u+ \partial_t u = 0
\,\,\,\,\,\text{\rm and} \,\,\,\,\,
f(x) = \lim_{t\to 0^+} u(t,x)\,\,.
\end{align*}

\hskip 0.2in
In this paper we consider hyperbolic surfaces having conical singularities (see \cite{Ju 95}), surfaces realized as the action of discrete groups $\Gamma$ of $\mathrm{PSL}(2,\mathbb{R})$ acting on $\mathbb{H}$. The conical singularities are present once the group $\Gamma$ contains elements (other than the identity) having fixed points. Such is the case with the full modular group $\mathrm{PSL}(2,\mathbb{Z})\,\,.$
In particular, let $M$ be a compact hyperbolic surface, having  $n$ marked points $\{c_i\}_{i=1}^{n}.$ The Riemannian metric $g$ on $M$ is called conically singular hyperbolic metric if and only if for every $i = 1,..,n$ there exists a chart $(U_i,\mu_i)$ about the point $c_i$ isometrically mapping $U_i$ to a hyperbolic cone model with associated angle $\alpha_i$. The metric $g$ induces a compatible complex structure on $M\backslash \{c_i\}_{i=1}^n$ and together with the charts  $\{(U_i,\mu_i)\}_{i=1}^{n}$, provide a complex structure on $M$. Given such structure, there exist a unique complete hyperbolic metric on $M\backslash \{c_i\}_{i=1}^n$ for which each $c_i$ is a cusp. 

\hskip 0.2in
As the surfaces in consideration have conical points, there is a way to extend the domain on which the Laplace operator acts so that it is self-adjoint. The Friedrichs procedure is one such possible extension and we use it here for our spectral purposes. Namely, the domain of the extension is the closure in $L^2(M)$ of the space
\begin{align*}
D = \left\{ f \in L^2(M) \,\,: \,\, \int_M (<\textrm{grad} f, \textrm{grad} f > + f^2)d\mu < \infty\ \textrm{ and } \int_{\partial \textrm{cusp}} f d\mu= 0 \right\}
\end{align*} 
where $d\mu$ denotes the hypebolic volume form and the domain of integration for the second integral is a horocycle.
For details of the above construction we refer the reader to \cite{LP 76}, \cite{CdV 83}, \cite{Ju 95}, and \cite{Ji 94}. Throughout this paper, we will refer to the pseudo-Laplacian above as simply the Laplace operator.

\hskip 0.2in If $M$ is compact, then the spectrum of the (non-negative) Laplace operator is discrete, consisting of eigenvalues $0=\lambda_0 < \lambda_1 \le \lambda_2 \le \,\,\, \to \infty$ counted with multiplicity. Associated to these eigenvalues there is complete system $\{\phi_n(x)\}_{n=0}^{\infty}$
of orthonormal eigenfunction of the Laplace operator on $M.$  For $t>0$ and $x,y \in M$, the heat kernel has the following realization \begin{align}\label{heatkernelexpansioncompact}
K_M(t,x,y) = \sum_{n=0}^{\infty} e^{-\lambda_n t} \phi_n(x) \phi_n(y)\,\,,
\end{align}
and the sum converges uniformly on $[t_0,\infty)\times M \times M$ for fixed $t_0>0$ (see for instance \cite{Ch 84}.)

\hskip 0.2in If $M$ is not compact, the spectrum has a discrete part as well as a continuous part in the real interval $[1/4,\infty]$. The continuos spectrum comes from the parabolic Eisenstein series $E_{\text{par};M,P}(s,z)$ associated to the each cusp $P$ of $M$. In such case, the spectral expansion has the following form (coming from \cite{He 83})
\begin{align}\label{heatkernelexpansionnoncompact}
\notag K_M(t,x,y) =& \sum_{\text{discrete}} e^{-\lambda_n t} \phi_n(x) \phi_n(y)\\
&+ \frac{1}{2\pi} \sum_{\text{cusps }P} \int_{0}^{\infty} e^{-(1/4+r^2)t} E_{\text{par};M,P}(1/2+ir,x) \overline{E_{\text{par};M,P}(1/2+ir,y)} dr\,\,.
\end{align}

\hskip .20in  Let $K_{\mathbb{H}}(t, \tilde{x}, \tilde{y})$
denote the heat kernel on the upper half plane. Recall that
$K_{\mathbb{H}}(t, \tilde{x}, \tilde{y})$ is a function of $t$ and
the hyperbolic distance $d = d_{\mathbb{H}}(\tilde{x}, \tilde{y})$
between $\tilde{x}$ and $\tilde{y}$, so
\begin{equation*}
K_{\mathbb{H}}(t, \tilde{x}, \tilde{y}) = K_{\mathbb{H}}(t,
d_{\mathbb{H}}(\tilde{x}, \tilde{y})).
\end{equation*}
Quoting from page 246 of \cite{Ch 84}, we have for $d>0$
\begin{equation}\label{heatkernelonH1}
K_{\mathbb{H}}(t, \rho) = \frac{\sqrt{2}e^{-t/4}}{(4\pi t)^{3/2}}
\int_{\rho}^{\infty} \frac{u e^{-u^2/4t} du}{\sqrt{\cosh u - \cosh \rho}}
\end{equation}
with
\begin{equation}\label{heatkernelonH0}
K_{\mathbb{H}}(t, 0) = \frac{1}{2\pi } \int_{0}^{\infty}
e^{-(1/4+r^2)t} \tanh(\pi r)rdr.
\end{equation}

\begin{rmk}\label{complexrealtime}
\emph{It is possible to extend the heat kernel from real valued time
to complex valued time. For the complex valued time $z=t+is$ with
$t>0$ we have
\begin{equation*}
K_{\mathbb{H}}(z, d) = \frac{\sqrt{2}e^{-z/4}}{(4\pi z)^{3/2}}
\int_{d}^{\infty} \frac{u e^{-u^2/4z}du}{\sqrt{\cosh u - \cosh d}}
\end{equation*}
To see that this makes sense set $\tau=|z|^2/t$ which is clearly
positive. Then we have the bound
\begin{align*}
|K_{\mathbb{H}}(z,d)| &\le
\frac{\sqrt{2}e^{-t/4}}{(4\pi)^{3/2}(t^2+s^2)^{3/4}}
\int_{d}^{\infty} \frac{ue^{-u^2/4\tau}du}{\sqrt{\cosh u - \cosh
d}}\\
&\le e^{s^2/4t}t^{-3/2}(t^2+s^2)^{3/4} K_{\mathbb{H}}(\tau,d).
\end{align*}
}
\end{rmk}

\hskip 0.2in For any hyperbolic Riemann surface $M \simeq \Gamma \backslash \mathbb H$, one can express
the heat kernel as a periodization of the heat kernel of the
hyperbolic plane. Let $x$ and $y$ denote points on $M$ with lifts
$\tilde{x}$ and $\tilde{y}$ to $\mathbb{H}$. Then we can write the
heat kernel on $M$ as
\begin{equation}\label{periodization}
K_M(t,x,y) = \sum_{\gamma \in \Gamma} K_{\mathbb{H}}(t,
d_{\mathbb{H}}(\tilde{x}, \gamma \tilde{y})).
\end{equation}

Denote by $H(\Gamma), P(\Gamma)$, and $E(\Gamma)$ complete sets of
$\Gamma$-inconjugate primitive hyperbolic, parabolic, and elliptic elements, respectively, of the group  $\Gamma$. If $M$ is compact, then $P(\Gamma)$ is
empty. Let $\Gamma_{\gamma}$ denote the centralizer of $\gamma \in
\Gamma$. If $\gamma$ is a hyperbolic or a parabolic element then
$\Gamma_{\gamma}$ is isomorphic to the infinite cyclic group. If
$\gamma$ is elliptic then its centralizer is isomorphic
to the finite cyclic group of order $q_{\gamma}$. In each instance, the centralizer is generated by a primitive element. We can use
elementary theory of Fuchsian groups (see for instance \cite{McK 72}) and decompose the group $\Gamma$ into conjugacy classes as follows. Given any hyperbolic or parabolic element $\eta$, there exist a primitive element $\gamma$ in $H(\Gamma)$ or $P(\Gamma)$ respectively, and a unique positive integer $n$, such that  $\eta$ belongs to the conjugacy class  $\{\kappa^{-1}\gamma^n\kappa : \kappa \in \Gamma_{\gamma} \backslash \Gamma\}$. For a given elliptic element $\eta$, there exist a primitive element $\gamma \in E(\Gamma)$  and a unique integer $1\le n < q_{\gamma}$, such that  $\eta$ belongs to the conjugacy class  $\{\kappa^{-1}\gamma^n \kappa : \kappa \in \Gamma_{\gamma} \backslash \Gamma\}$. With these in mind, we can write the periodization (\ref{periodization}) as 
\begin{align*}
K_M(t,x,y) = K_{\mathbb{H}}(t,\tilde{x},\tilde{y}) &+
 \sum_{\gamma \in P(\Gamma)} \sum_{n=1}^{\infty}\sum_{\kappa \in
\Gamma_{\gamma} \backslash \Gamma}
K_{\mathbb{H}}(t,\tilde{x},\kappa^{-1} \gamma^n \kappa
\tilde{y})\\
&+  \sum_{\gamma \in H(\Gamma)} \sum_{n=1}^{\infty}\sum_{\kappa \in
\Gamma_{\gamma} \backslash \Gamma}
K_{\mathbb{H}}(t,\tilde{x},\kappa^{-1} \gamma^n \kappa
\tilde{y})\\
&+  \sum_{\gamma \in E(\Gamma)} \sum_{n=1}^{q_{\gamma}-1}
\sum_{\kappa \in \Gamma_{\gamma} \backslash \Gamma}
K_{\mathbb{H}}(t,\tilde{x},\kappa^{-1} \gamma^n \kappa \tilde{y}).
\end{align*}
Using the above decomposition we define the parabolic contribution
(i.e. the contribution coming from the parabolic elements) to the
trace of the heat kernel by
\begin{align*}
PK_M(t,x) = \sum_{\gamma \in P(\Gamma)} \sum_{n=1}^{\infty} 
\sum_{\kappa \in \Gamma_{\gamma} \backslash \Gamma}
K_{\mathbb{H}}(t,\tilde{x},\kappa^{-1} \gamma^n \kappa \tilde{x})
\end{align*}
and in a similar manner we define the hyperbolic contribution and
elliptic contribution which we denote by $HK_M(t,x)$ and $EK_M(t,x)$
respectively.

\begin{thm}\label{integralrepresentation}
Let $M$ be a connected, hyperbolic Riemann surfaces of finite volume
with $p$ cusps and $m$ elliptic fixed points. \mbox{}
\begin{enumerate}[(a)]
\item
For each $t>0$, the sum
\begin{align*}
HK_M(t,x) = \sum_{\gamma \in H(\Gamma)} \sum_{n=1}^{\infty} 
\sum_{\kappa \in \Gamma_{\gamma} \backslash \Gamma}
K_{\mathbb{H}}(t,\tilde{x},\kappa^{-1} \gamma^n \kappa \tilde{x})
\end{align*}
is a well-defined function of $x \in M$.
\item
For $\gamma \in H(\Gamma)$
denote by $C_{\gamma}$ the infinite volume hyperbolic cylinder
which is realized as $\Gamma_{\gamma}\backslash \mathbb{H}$. Then we
have the equality
\begin{align*}
\text{\rm HTr}K_M(t) &= \int_M HK_M(t,x) d\mu(x)\\
&= \frac{1}{2} \sum_{\gamma \in H(\Gamma)}
\int_{C_{\gamma}}(K_{C_{\gamma}}-K_{\mathbb{H}})(t,x,x)d\mu(x).
\end{align*}
\item
For each $t>0$, the sum
\begin{align*}
EK_M(t,x) = \sum_{\gamma \in E(\Gamma)} \sum_{n=1}^{q_{\gamma}-1}
\sum_{\kappa \in \Gamma_{\gamma} \backslash \Gamma}
K_{\mathbb{H}}(t,\tilde{x},\kappa^{-1} \gamma^n \kappa \tilde{x})
\end{align*}
is a well-defined function of $x \in M$.
\item
For $\gamma \in E(\Gamma)$
denote by $C_{\gamma}$ the infinite volume hyperbolic cone which
is realized as $\Gamma_{\gamma}\backslash \mathbb{H}$. Then we have
the equality
\begin{align*}
\text{\rm ETr}K_M(t) &= \int_M EK_M(t,x) d\mu(x)\\
&= \sum_{\gamma \in E(\Gamma)}
\int_{C_{\gamma}}(K_{C_{\gamma}}-K_{\mathbb{H}})(t,x,x)d\mu(x).
\end{align*}
\end{enumerate}
\end{thm}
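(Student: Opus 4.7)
The plan is to combine two ingredients: a standard unfolding argument that trades the integral of a sum over cosets of $\Gamma_\gamma\backslash\Gamma$ for an integral over a fundamental domain of $\Gamma_\gamma$, together with the fact that the heat kernel on the quotient $C_\gamma = \Gamma_\gamma\backslash\mathbb H$ is the $\Gamma_\gamma$-periodization of $K_{\mathbb H}$. Once the absolute convergence asserted in (a) and (c) is in hand, the identities in (b) and (d) follow from algebraic manipulation, with the factor $1/2$ in (b) (but not in (d)) accounted for by the symmetry $\gamma^n\leftrightarrow\gamma^{-n}$ of the infinite cyclic centralizer in the hyperbolic case.

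For (a), I would reindex the triple sum as a single sum over the set of hyperbolic elements of $\Gamma$, using the uniqueness of the parametrization $\eta=\kappa^{-1}\gamma^n\kappa$ with $\gamma\in H(\Gamma)$, $n\ge 1$, $\kappa\in\Gamma_\gamma\backslash\Gamma$ already recorded in the text. From (\ref{heatkernelonH1}) one extracts a bound of the shape $K_{\mathbb H}(t,d)\le C(t)e^{-d^2/(8t)}$ for $d$ large, and this combined with the classical Fuchsian lattice point estimate that $\#\{\eta\in\Gamma : d_{\mathbb H}(\tilde x,\eta\tilde x)\le R\}=O(e^R)$ dominates the tail by $\int_1^\infty e^{-d^2/(8t)+d}\,dd<\infty$, giving locally uniform absolute convergence in $x$. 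Part (c) is the same argument, but simpler: only finitely many primitive elliptic classes contribute near any given $x$, and the $n$-sum is finite of length $q_\gamma-1$.

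For (b), fix $\gamma\in H(\Gamma)$ and $n\ge 1$, let $F\subset\mathbb H$ be a fundamental domain for $\Gamma$, and observe that $F_\gamma=\bigsqcup_{\kappa\in\Gamma_\gamma\backslash\Gamma}\kappa F$ is a fundamental domain for $\Gamma_\gamma$, identifiable with $C_\gamma$. Changing variables $\tilde y=\kappa\tilde x$ and using the $\mathrm{PSL}(2,\mathbb R)$-invariance of $K_{\mathbb H}$,
\begin{align*}
\int_F\sum_{\kappa\in\Gamma_\gamma\backslash\Gamma}K_{\mathbb H}(t,\tilde x,\kappa^{-1}\gamma^n\kappa\tilde x)\,d\mu(x)=\int_{C_\gamma}K_{\mathbb H}(t,\tilde x,\gamma^n\tilde x)\,d\mu(x).
\end{align*}
Next, the heat kernel on $C_\gamma$ is $K_{C_\gamma}(t,x,y)=\sum_{n\in\mathbb Z}K_{\mathbb H}(t,\tilde x,\gamma^n\tilde y)$, so on the diagonal
\begin{align*}
K_{C_\gamma}(t,x,x)-K_{\mathbb H}(t,x,x)=2\sum_{n=1}^\infty K_{\mathbb H}(t,\tilde x,\gamma^n\tilde x),
\end{align*}
the factor $2$ arising from the pairing $n\leftrightarrow -n$ via $d_{\mathbb H}(\tilde x,\gamma^n\tilde x)=d_{\mathbb H}(\tilde x,\gamma^{-n}\tilde x)$. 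Summing over $\gamma\in H(\Gamma)$ and $n\ge 1$ and dividing by $2$ yields (b). Part (d) is parallel: the cone periodization is the finite sum $K_{C_\gamma}(t,x,y)=\sum_{n=0}^{q_\gamma-1}K_{\mathbb H}(t,\tilde x,\gamma^n\tilde y)$, so subtracting the $n=0$ term produces exactly $\sum_{n=1}^{q_\gamma-1}K_{\mathbb H}(t,\tilde x,\gamma^n\tilde x)$, with no pairing and therefore no factor of $1/2$.

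The principal obstacle is the convergence estimate in (a): because the lattice $\Gamma$ has exponential growth, one must extract the genuine Gaussian $e^{-d^2/4t}$ decay from the integral formula (\ref{heatkernelonH1}), rather than just a polynomial-exponential envelope, in order to overwhelm the lattice count. Once this estimate is pinned down with constants uniform on compact subsets of $M$, all of the subsequent unfolding and periodization steps are routine.
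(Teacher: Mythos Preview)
Your proposal is correct and is exactly the standard unfolding-plus-periodization argument that the paper has in mind: the paper itself gives no details, referring parts (a) and (b) to Theorem~1.1 of \cite{JoLu 97b} and stating that (c) and (d) follow the same pattern. Your explanation of the factor $1/2$ in (b) versus its absence in (d)---the $n\leftrightarrow -n$ pairing in the infinite cyclic centralizer $\langle\gamma\rangle$ versus the fact that in the finite cyclic case the sum $\sum_{n=1}^{q_\gamma-1}$ already runs over all nontrivial group elements---is the right one, and the Gaussian-versus-exponential lattice estimate you sketch for (a) is precisely the mechanism used in \cite{JoLu 97b}.
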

\begin{proof}
Note that parts (c) and (d) are the ``elliptic version'' of the
first two parts and follow the same proof pattern. For details for
parts (a) and (b) see Theorem 1.1 of \cite{JoLu 97b}.
\end{proof}

\begin{defn}\label{standardheattrace}
Let us define the regularized or standard heat trace for the
connected hyperbolic surface $M$ as
\begin{align*}
\text{\rm STr}K_M(t) = \text{\rm HTr}K_M(t) + \text{\rm ETr}K_M(t) +
\mathrm{vol}(M)K_{\mathbb{H}}(t,0).
\end{align*}
If $M$ is a hyperbolic Riemann surface of finite volume, but not
connected, let $M_1, \cdots, M_n$ denote the connected components,
and define
\begin{align*}
\text{\rm HTr}K_M(t) &= \sum_{j=1}^n \text{\rm HTr}K_{M_j}(t) \quad \text{\rm ETr}K_M(t) =
\sum_{j=1}^n \text{\rm ETr}K_{M_j}(t)\\
\text{\rm STr}K_M(t) &= \sum_{j=1}^n \text{\rm STr}K_{M_j}(t)
\end{align*}
\end{defn}

\hskip 0.2in The following result due to Selberg \cite{Se 56}
evaluates the integral representation stated in Theorem
\ref{integralrepresentation} part (b).
\begin{thm}\label{hyperbolicheattrace}
Let $M$ be a connected, hyperbolic Riemann surface of finite volume
with $p$ cusps and $m$ elliptic fixed points. Let $\ell_{\gamma}$ be
the length of the geodesic in the homotopy class determined by
$\gamma \in H(\Gamma)$. Then the hyperbolic trace of the heat kernel
is given by
\begin{equation*}
\text{\rm HTr}K_M(t) = \frac{e^{-t/4}}{\sqrt{16\pi t}}
\sum_{\gamma \in H(\Gamma)}\sum_{n=1}^{\infty}
\frac{\ell_{\gamma}}{\sinh(n
\ell_{\gamma}/2)}e^{-(n\ell_{\gamma})^2/4t}.
\end{equation*}
\end{thm}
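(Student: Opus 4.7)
The plan is to start from the integral representation given in Theorem \ref{integralrepresentation}(b), namely
\[
\text{HTr}K_M(t) = \tfrac{1}{2}\sum_{\gamma\in H(\Gamma)}\int_{C_\gamma}\bigl(K_{C_\gamma}-K_{\mathbb{H}}\bigr)(t,x,x)\,d\mu(x),
\]
and to evaluate each summand via direct calculation on the infinite cylinder $C_\gamma=\Gamma_\gamma\backslash\mathbb{H}$. Since $\Gamma_\gamma$ is infinite cyclic, the periodization formula \eqref{periodization} gives $K_{C_\gamma}(t,x,x)=\sum_{n\in\mathbb{Z}}K_{\mathbb{H}}(t,d(\tilde x,\gamma^n\tilde x))$, so the difference $(K_{C_\gamma}-K_{\mathbb{H}})(t,x,x)$ equals $\sum_{n\neq 0}K_{\mathbb{H}}(t,d(\tilde x,\gamma^n\tilde x))$. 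The map $\gamma\leftrightarrow\gamma^{-1}$ doubles each positive-$n$ term, absorbing the factor $1/2$, so the task reduces to computing $I_n(\gamma):=\int_{C_\gamma}K_{\mathbb{H}}\bigl(t,d(\tilde x,\gamma^n\tilde x)\bigr)\,d\mu(x)$ for each $n\ge 1$ and each primitive hyperbolic class.

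Next I would conjugate so that $\gamma=\mathrm{diag}(e^{\ell_\gamma/2},e^{-\ell_\gamma/2})$, take the annular fundamental domain $\{re^{i\theta}:1\le r<e^{\ell_\gamma},\ 0<\theta<\pi\}$ for $\Gamma_\gamma$ in $\mathbb{H}$, and use the distance formula to obtain
\[
\cosh d(z,\gamma^n z) = 1 + \frac{2|z|^2\sinh^2(n\ell_\gamma/2)}{(\mathrm{Im}\,z)^2} = 1 + 2\csc^2\theta\,\sinh^2(n\ell_\gamma/2),
\]
which depends only on $\theta$. With the hyperbolic area element $d\mu=\frac{dr\,d\theta}{r\sin^2\theta}$, the $r$–integral is $\int_1^{e^{\ell_\gamma}}\frac{dr}{r}=\ell_\gamma$, so
\[
I_n(\gamma) = \ell_\gamma\int_0^\pi\frac{K_{\mathbb{H}}(t,d(\theta))}{\sin^2\theta}\,d\theta = \ell_\gamma\int_{-\infty}^{\infty}K_{\mathbb{H}}\bigl(t,d(u)\bigr)\,du,
\]
after the substitution $u=\cot\theta$ which gives $\sinh^2(d/2)=(1+u^2)\sinh^2(n\ell_\gamma/2)$.

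The main computational step is then to evaluate this line integral. Using the integral representation \eqref{heatkernelonH1} for $K_{\mathbb{H}}(t,\rho)$ and interchanging the order of integration, the integration in $u$ is restricted to the range where $\cosh r\ge\cosh d(u)$, i.e.\ $r\ge n\ell_\gamma$ and $u^2\le(\cosh r-\cosh(n\ell_\gamma))/(2\sinh^2(n\ell_\gamma/2))$. The inner $u$-integral reduces, after the standard substitution $w=u\sqrt{2}\sinh(n\ell_\gamma/2)/\sqrt{\cosh r-\cosh(n\ell_\gamma)}$, to $\int_{-1}^{1}(1-w^2)^{-1/2}dw=\pi$ divided by $\sqrt{2}\sinh(n\ell_\gamma/2)$. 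The remaining $r$-integral $\int_{n\ell_\gamma}^{\infty}r e^{-r^2/4t}dr=2t\,e^{-(n\ell_\gamma)^2/4t}$ is elementary, and combining all constants gives
\[
I_n(\gamma) = \frac{e^{-t/4}}{\sqrt{16\pi t}}\cdot\frac{\ell_\gamma}{\sinh(n\ell_\gamma/2)}\,e^{-(n\ell_\gamma)^2/4t}.
\]
Summing over $\gamma\in H(\Gamma)$ and $n\ge 1$ yields the claimed identity.

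The main obstacle is the exchange of integration orders in the last step, which has to be justified by absolute convergence; this is routine given the Gaussian decay of $K_{\mathbb{H}}(t,\rho)$ in $\rho$ and the fact that the measure of the region of integration grows only like $\sqrt{\cosh r}$. A subsidiary point is verifying that $I_n(\gamma)$ is independent of the chosen representative within its $\Gamma$-conjugacy class and of the choice of generator of $\Gamma_\gamma$, which follows from the invariance of $K_{\mathbb{H}}$ under isometries of $\mathbb{H}$ and the fact that replacing $\gamma$ by $\gamma^{-1}$ just reverses the sign of $n$.
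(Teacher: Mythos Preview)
Your proof is correct and follows the standard approach: it is precisely the hyperbolic analogue of the computation the paper carries out in full for Theorem~\ref{ellipticheattrace} (unfold to the model cylinder, write the distance formula, plug in \eqref{heatkernelonH1}, swap the order of integration, and evaluate the elementary inner integrals). The paper itself does not give a proof here but simply cites Theorem~1.3 of \cite{JoLu 97b}; your argument is essentially that proof, so there is nothing to compare.
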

\begin{proof}
For details see Theorem 1.3 of \cite{JoLu 97b}.
\end{proof}

\hskip 0.2in The following result evaluates the integral
representation stated in Theorem \ref{integralrepresentation} part
(d).
\begin{thm}\label{ellipticheattrace}
Let $M$ be a connected, hyperbolic Riemann surface of finite volume
with $p$ cusps and $m$ elliptic fixed points. For each $\gamma \in
E(\Gamma)$ denote by $q_{\gamma}$ the order of the finite cyclic
group $\Gamma_{\gamma}$ generated by $\gamma$. Denote by
$C_{q_{\gamma}}$ the infinite hyperbolic cone associated to $\gamma$
which is realized as $\Gamma_{\gamma}\backslash \mathbb{H}$. Then
the elliptic trace of the heat kernel is given by
\begin{equation*}
\text{\rm ETr}K_M(t) = \frac{e^{-t/4}}{\sqrt{16\pi t}}\sum_{\gamma\in
E(\Gamma)}\sum_{n=1}^{q_{\gamma}-1} \frac{1}{q_{\gamma}} 
\int_{0}^{\infty}
\frac{e^{-u^2/4t}\cosh(u/2)}{\sinh^2(u/2)+\sin^2(n\pi/q_{\gamma})}du.
\end{equation*}
\end{thm}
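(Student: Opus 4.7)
The plan is to reduce the problem via Theorem \ref{integralrepresentation}(d) to evaluating, for each $\gamma \in E(\Gamma)$, the integral $\int_{C_\gamma}(K_{C_\gamma} - K_{\mathbb{H}})(t,x,x)\,d\mu(x)$. Since $C_\gamma = \Gamma_\gamma\backslash\mathbb{H}$ and $\Gamma_\gamma$ is cyclic of finite order $q_\gamma$, the periodization (\ref{periodization}) converts this into a \emph{finite} sum: the $\kappa = \mathrm{id}$ term contributes $K_{\mathbb{H}}(t,x,x)$, which cancels the subtraction, leaving
\begin{align*}
(K_{C_\gamma} - K_{\mathbb{H}})(t,x,x) = \sum_{n=1}^{q_\gamma-1} K_{\mathbb{H}}\bigl(t, d_{\mathbb{H}}(\tilde{x}, \gamma^n\tilde{x})\bigr).
\end{align*}
Thus the problem reduces to evaluating $\int_{C_\gamma} K_{\mathbb{H}}(t, d_n(x))\,d\mu(x)$ for each $1 \le n \le q_\gamma - 1$, where $d_n(x) := d_{\mathbb{H}}(\tilde{x}, \gamma^n\tilde{x})$.

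Next, I would use the polar description (\ref{hyperbolicpolarcoordinates}) of the cone, for which $d\mu = q_\gamma^{-1}\sinh\rho\, d\rho\, d\theta$ with $\theta\in[0,2\pi)$, together with (\ref{beardon}), giving $\cosh d_n(x) = 1 + 2\sin^2(n\pi/q_\gamma)\sinh^2\rho$. Since the integrand depends only on $\rho$, the $\theta$-integration contributes a factor $2\pi/q_\gamma$. After inserting the integral representation (\ref{heatkernelonH1}) for $K_{\mathbb{H}}(t,\rho)$ and switching the order of integration by Fubini (justified by nonnegativity), the inner $\rho$-integral becomes elementary: the substitution $w = \cosh\rho$ and the identity $\cosh u - \cosh d_n = 2\sin^2(n\pi/q_\gamma)(w_{\max}^2 - w^2)$ with $w_{\max} = \sqrt{1 + \sinh^2(u/2)/\sin^2(n\pi/q_\gamma)}$ reduce it to an arcsine integral evaluating in closed form to
\begin{align*}
\frac{1}{\sqrt{2}\sin(n\pi/q_\gamma)}\,\arccos\!\left(\frac{\sin(n\pi/q_\gamma)}{\sqrt{\sin^2(n\pi/q_\gamma)+\sinh^2(u/2)}}\right).
\end{align*}

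The final step, which I expect to be the most delicate in terms of bookkeeping, is integration by parts in $u$ to convert the arccosine expression into the stated quotient. Using $\int u e^{-u^2/4t}\,du = -2t\, e^{-u^2/4t}$, the boundary terms vanish since the arccosine equals $0$ at $u=0$ and is bounded as $u\to\infty$. A direct computation shows
\begin{align*}
\frac{d}{du}\arccos\!\left(\frac{\sin(n\pi/q_\gamma)}{\sqrt{\sin^2(n\pi/q_\gamma)+\sinh^2(u/2)}}\right) = \frac{\sin(n\pi/q_\gamma)\cosh(u/2)}{2\bigl(\sin^2(n\pi/q_\gamma)+\sinh^2(u/2)\bigr)},
\end{align*}
so the $\sin(n\pi/q_\gamma)$ factors cancel and the desired integrand appears. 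Multiplying the constants $\sqrt{2}\,e^{-t/4}/(4\pi t)^{3/2}$ from (\ref{heatkernelonH1}), $2\pi/q_\gamma$ from the angular integration, $(\sqrt{2}\sin(n\pi/q_\gamma))^{-1}$ from the arccosine evaluation, $2t$ from the integration by parts, and $\sin(n\pi/q_\gamma)/2$ from the derivative collapses to $e^{-t/4}/(q_\gamma\sqrt{16\pi t})$, completing the derivation. The main obstacle is purely computational: correctly evaluating the arccosine derivative and tracking the four or five sources of constants through the argument without losing a factor.
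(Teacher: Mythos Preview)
Your proposal is correct and follows essentially the same route as the paper's proof: both unfold to the integral $\int_{C_{q_\gamma}} K_{\mathbb{H}}(t,d(z,\gamma^n z))\,d\mu(z)$, pass to polar coordinates using (\ref{beardon}), insert the representation (\ref{heatkernelonH1}), substitute $w=\cosh\rho$, interchange the order of integration, evaluate the inner integral via the arccosine formula, and then integrate by parts in $u$ to produce the stated integrand; your constant tracking and derivative computation match the paper's exactly.
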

\begin{proof}
We start by unfolding the integral representing the elliptic heat
trace, so that the integration will take place over each cone
$C_{q_{\gamma}}$.  That is, we write
\begin{align*}
\text{\rm ETr}K_M(t) =& \int_{M} \sum_{\gamma \in E(\Gamma)} \sum_{n=1}^{q_{\gamma}-1} \sum_{\kappa \in
\Gamma_{\gamma} \backslash \Gamma}
 K_{\mathbb{H}}(t,z,\kappa^{-1} \gamma^n \kappa z) d\mu(z)\\
=&\sum_{\gamma \in E(\Gamma)} \sum_{n=1}^{q_{\gamma}-1} \sum_{\kappa
\in \Gamma_{\gamma} \backslash \Gamma}
\int_{\Gamma \backslash \mathbb{H}} K_{\mathbb{H}}(t,\kappa z,\gamma^n \kappa z) d\mu(z)\\
=&\sum_{\gamma \in E(\Gamma)}\sum_{n=1}^{q_{\gamma}-1}
\int_{C_{q_{\gamma}}}K_{\mathbb{H}}(t,z,\gamma^n z) d\mu(z).
\end{align*}
We know proceed by computing the inner integral. In doing so, it
will be convenient to write $q$ instead of $q_{\gamma}$ and call the
inner integral $I$. Using the hyperbolic polar coordinates as
described by equation (\ref{hyperbolicpolarcoordinates}), we can
write
\begin{align*}
I &= \int_{C_q}K_{\mathbb{H}}(t,d(z,\gamma^n z)) d\mu (z)\\
&= \int_{0}^{2\pi}\int_{0}^{\infty} K_{\mathbb{H}}(t,d(z,\gamma^n
z))q^{-1}\sinh(\rho)d\rho d\theta.
\end{align*}
Using the representation for the heat kernel on the upper half plane
as in equation (\ref{heatkernelonH1}), we can further write
\begin{align*}
I &= \int_{0}^{2\pi}\int_{0}^{\infty} \frac{\sqrt{2}e^{-t/4}}{(4\pi
t)^{3/2}} \int_{d(z,\gamma^n z)}^{\infty}
\frac{ue^{-u^2/4t}du}{\sqrt{\cosh u - \cosh d(z,\gamma^n z)}}
q^{-1}\sinh(\rho)d\rho d\theta\\
&=\frac{\sqrt{2}e^{-t/4}}{(4\pi t)^{3/2}} \frac{2\pi}{q}
\int_{0}^{\infty} \int_{d(z,\gamma^n z)}^{\infty}
\frac{ue^{-u^2/4t}du}{\sqrt{\cosh u - \cosh d(z,\gamma^n z)}}
\sinh(\rho)d\rho
\end{align*}
where the last equality follows from integrating with respect to the
$\theta$ variable.

Referring to equation (\ref{beardon}), let $a(n,q,\rho)=d(z,\gamma^n
z)=\cosh^{-1}(1+2\sin^2(n\pi/q)\sinh^2(\rho))$. Then we have
\begin{align*}
I =\frac{\sqrt{2}e^{-t/4}}{(4\pi t)^{3/2}} \frac{2\pi}{q}
\int_{0}^{\infty} \int_{a(n,q,\rho)}^{\infty}
\frac{ue^{-u^2/4t}du}{\sqrt{\cosh u -
1-2\sin^2(n\pi/q)\sinh^2(\rho)}} \sinh(\rho)d\rho.
\end{align*}
Now make the following change of variables $x=\cosh(\rho)$, so then
$dx=\sinh(\rho)d\rho$ and the limits of integration change to 1 and
$\infty$ respectively. Using that $\sinh^2 (\rho) = x^2-1$, we can
further write
\begin{align*}
I =\frac{\sqrt{2}e^{-t/4}}{(4\pi t)^{3/2}} \frac{2\pi}{q}
\int_{1}^{\infty} \int_{b(n,q,x)}^{\infty}
\frac{ue^{-u^2/4t}dudx}{\sqrt{\cosh u - 1-2\sin^2(n\pi/q)(x^2-1)}}
\end{align*}
where $b(n,q,x)=\cosh^{-1}(1+2\sin^2(n\pi/q)(x^2-1))$.

We proceed by interchanging the limits of integration. As such, note
that the variable $u$ will range from 0 to $\infty$ and the variable
$x$ from 1 to $c(n,q,u)$, the latter being defined by
\begin{align*}
c(n,q,u) = \sqrt{1+\frac{\cosh u - 1}{2\sin^2(n\pi/q)}}
\end{align*}
which comes from solving $\cosh u - 1-2\sin^2(n\pi/q)(x^2-1)=0$ in
terms of $x$. Thus, we have that
\begin{align*}
I =\frac{\sqrt{2}e^{-t/4}}{(4\pi t)^{3/2}} \frac{2\pi}{q}
\int_{0}^{\infty} \int_{1}^{c(n,q,u)}
\frac{ue^{-u^2/4t}dxdu}{\sqrt{\cosh u - 1-2\sin^2(n\pi/q)(x^2-1)}}.
\end{align*}
We can now compute the integral with respect to $x$. In doing so we
will use the integration formula
\begin{align*}
\int_{a}^{b} \frac{dx}{\sqrt{\alpha -\beta x^2}} =
-\frac{1}{\sqrt{\beta}}
\cos^{-1}\Bigg(x\sqrt{\frac{\beta}{\alpha}}\Bigg)\Bigg|_{x=a}^{x=b}.
\end{align*}
With $\alpha = \cosh u - 1+2\sin^2(n\pi/q)$ and
$\beta=2\sin^2(n\pi/q)$ we get that the inner integral equals
\begin{align*}
-\frac{1}{\sqrt{2}\sin(n\pi/q)}\Bigg[ &\cos^{-1}\Bigg(
\sqrt{1+\frac{\cosh u -
1}{2\sin^2(n\pi/q)}}\sqrt{\frac{2\sin^2(n\pi/q)}{{\cosh u -
1+2\sin^2(n\pi/q)}}}\Bigg)\\
&- \cos^{-1}\Bigg( \sqrt{\frac{2\sin^2(n\pi/q)}{{\cosh u -
1+2\sin^2(n\pi/q)}}}\Bigg)\Bigg]\\
&= \frac{1}{\sqrt{2}\sin(n\pi/q)}\cos^{-1}\Bigg(
\sqrt{\frac{2\sin^2(n\pi/q)}{{\cosh u - 1+2\sin^2(n\pi/q)}}}\Bigg)
\end{align*}
since the first term in the brackets equals $\cos^{-1}(1)=0$. Then
we can write
\begin{align*}
I =\frac{e^{-t/4}}{(4\pi t)^{3/2}} \frac{2\pi}{q\sin(n\pi/q)}
\int_{0}^{\infty} ue^{-u^2/4t} \cos^{-1}\Bigg(
\sqrt{\frac{2\sin^2(n\pi/q)}{{\cosh u - 1+2\sin^2(n\pi/q)}}}\Bigg)
du.
\end{align*}

Noting that $-2t\partial_u (e^{-u^2/4t})=ue^{-u^2/4t}$ we proceed by
integrating by parts
\begin{align*}
I =&\frac{e^{-t/4}}{(4\pi t)^{3/2}} \frac{2\pi (-2t)}{q\sin(n\pi/q)}
\int_{0}^{\infty} d(e^{-u^2/4t}) \cos^{-1}\Bigg(
\sqrt{\frac{2\sin^2(n\pi/q)}{{\cosh u - 1+2\sin^2(n\pi/q)}}}\Bigg)
du\\
=&-\frac{e^{-t/4}}{(4\pi t)^{1/2}} \frac{1}{q\sin(n\pi/q)} \Bigg[
e^{-u^2/4t} \cos^{-1}\Bigg( \sqrt{\frac{2\sin^2(n\pi/q)}{{\cosh u -
1+2\sin^2(n\pi/q)}}}\Bigg)\Bigg|_{u=0}^{u=\infty}\\
&- \int_{0}^{\infty} e^{-u^2/4t} \frac{d}{du}\cos^{-1}\Bigg(
\sqrt{\frac{2\sin^2(n\pi/q)}{{\cosh u - 1+2\sin^2(n\pi/q)}}}\Bigg)
du\Bigg].
\end{align*}
Note that the first term inside the brackets evaluates to 0 at both
limits of integration. This leaves us with
\begin{align*}
I =\frac{e^{-t/4}}{(4\pi t)^{1/2}} \frac{1}{q\sin(n\pi/q)}
\int_{0}^{\infty} e^{-u^2/4t} \frac{d}{du}\cos^{-1}\Bigg(
\sqrt{\frac{2\sin^2(n\pi/q)}{{\cosh u - 1+2\sin^2(n\pi/q)}}}\Bigg)
du.
\end{align*}
Using the formulas $\cosh u - 1=2\sinh^2(u/2)$ and $\sinh u =
2\sinh(u/2)\cosh(u/2)$ we get that
\begin{align*}
\frac{d}{du}\cos^{-1}\Bigg( \sqrt{\frac{2\sin^2(n\pi/q)}{{\cosh u -
1+2\sin^2(n\pi/q)}}}\Bigg) = \frac{1}{2}\cdot
\frac{\sin(n\pi/q)\cosh(u/2)}{\sinh^{2}(u/2)+\sin^2(n\pi/q)}
\end{align*}
which we then plug into the above integral to get
\begin{align*}
I &=\frac{e^{-t/4}}{2q\sqrt{4\pi t}} \int_{0}^{\infty}
\frac{e^{-u^2/4t}\cosh(u/2)}{\sinh^{2}(u/2)+\sin^2(n\pi/q)}du\\
&=\frac{e^{-t/4}}{q\sqrt{16\pi t}} \int_{0}^{\infty}
\frac{e^{-u^2/4t}\cosh(u/2)}{\sinh^{2}(u/2)+\sin^2(n\pi/q)}du.
\end{align*}
which completes the proof.
\end{proof}

\begin{rmk}\label{parseval}
\emph{The literature on the Selberg trace formula (see for example \cite{He 76} on page 351
or \cite{Ku 73} on pages 100-102), often gives the following expression for the elliptic heat trace
\begin{align}\label{Hejhalelliptictrace}
\text{\rm ETr}K_M(t) = \sum_{\gamma\in E(\Gamma)} 
\sum_{n=1}^{q_{\gamma}-1}\frac{e^{-t/4}}{2q_{\gamma}\sin(n\pi/q_{\gamma})}
            \int_{-\infty}^{\infty} \frac{e^{-2\pi nr / q_{\gamma} - tr^2}}{1+e^{-2\pi r}}dr
\end{align}
whereas our computations in Theorem \ref{ellipticheattrace} show
that
\begin{align*}
\text{\rm ETr}K_M(t) = \frac{e^{-t/4}}{\sqrt{16\pi t}}\sum_{\gamma\in
E(\Gamma)} \sum_{n=1}^{q_{\gamma}-1} \frac{1}{q_{\gamma}}
\int_{0}^{\infty}
\frac{e^{-u^2/4t}\cosh(u/2)}{\sinh^2(u/2)+\sin^2(n\pi/q_{\gamma})}du.
\end{align*}
\text{\hskip 0.2in}To see that the two different representations are equal we use
Parseval formula. Recall that the Fourier transform $\hat{f}(u)$  of a
function $f(r) \in L^1(\mathbb R)$ is given by
\begin{align*}
\hat{f}(u) = \frac{1}{2\pi}\int_{-\infty}^{\infty}
f(r)e^{-iur}dr\,\,,
\end{align*}
where the choice of the normalizing constant is consistent with the work of \cite{He 76} and \cite{He 83}.
Consider the integral in equation (\ref{Hejhalelliptictrace}) above
which we write as
\begin{align*}
I =\int_{-\infty}^{\infty} e^{-tr^2}\frac{e^{-2\pi n r / q
}}{1+e^{-2\pi r}}dr
\end{align*}
where for simplicity of notation we have placed $q$ in lieu of $q_{\gamma}$.
Set
\begin{align*}
f(r) = e^{-tr^2} \textrm{ and } g(r)=\frac{e^{-2\pi n r / q
}}{1+e^{-2\pi r}}\,\,.
\end{align*}
It immediately follows that
\begin{align*}
\hat{f}(u) = \frac{1}{\sqrt{4\pi t}} e^{-u^2/4t}\,\,.
\end{align*}
To compute the Fourier transform of $g(r)$, namely
\begin{align*}
\hat{g}(u) = \frac{1}{2\pi} \int_{-\infty}^{\infty} \frac{e^{-(2\pi n/q + iu)r}}{1+e^{-2\pi r}}dr\,\,,
\end{align*}
we set $h(z)=\cfrac{e^{-(2\pi n/q + iu)z}}{1+e^{-2\pi z}}$ and note that $h(z)$ has simple poles at $z=\cfrac{i}{2}(2m+1) \,\,, (m\in \mathbb{Z}),$ with residue $(2\pi)^{-1}\left(e^{-i(\pi n/q + iu/2)}\right)^{2m+1}$. By integrating $h(z)$ over the standard upper semicircle contour and using the residue theorem, we then obtain
\begin{align*}
\hat{g}(u) = \frac{i}{2\pi} \sum_{m=0}^{\infty} \left(e^{-i(\pi n/q + iu/2)}\right)^{2m+1}\,\,.
\end{align*}
Recall that for $|z|<1$ we have $\sum_{m=0}^{\infty} z^{2m+1} = \cfrac{z}{1-z^2} = \cfrac{1}{2}\cdot \cfrac{2}{z^{-1}-z}\,\,.$ Assuming initially that $u<0$, with $z=e^{-i(\pi n/q + iu/2)}$, we then arrive at
\begin{align*}
\hat{g}(u) = \frac{1}{4\pi} \frac{2i}{e^{i(\pi n/q + iu/2)}- e^{-i(\pi n/q + iu/2)}} = \frac{\csc(n\pi/q+iu/2)}{4\pi}\,\,.
\end{align*}
%where for the latter Fourier transform we made use of the formula (15) on page 120 of \cite{EMOT 54}.
\hskip 0.2in Using Parseval formula (see formula (13) on page 202 of \cite{Ru 74}), we have that 
\begin{align*}
I =& \int_{-\infty}^{\infty} f(r)\overline{g(r)}dr
= 2\pi \int_{-\infty}^{\infty} \hat{f}(u)\overline{\hat{g}(u)}du\\
=&\frac{1}{\sqrt{16\pi t}}\int_{-\infty}^{\infty} e^{-u^2/4t}
\overline{\csc(n\pi/q +iu/2)}du\,\,,
\end{align*}
where the $2\pi$ factor on the Fourier transform side is due to the choice of normalization in the definition of the Fourier transform.
Using that $\sin z =\sin (x+iy) = \sin x \cosh y + i \cos x \sinh y$
we can write
\begin{align*}
\csc(x+iy)&=\frac{1}{\sin x \cosh y + i \cos x \sinh y} \\
&=\frac{\sin x \cosh y}{\sinh^2 y +  \sin^2 x}-i\cdot\frac{\cos x
\sinh y}{\sinh^2 y + \sin^2 x}.
\end{align*}
With $x = n\pi/q$ and $y=u/2$ we can write
\begin{align*}
I =& \frac{1}{\sqrt{16\pi t}}\int_{-\infty}^{\infty} e^{-u^2/4t}
\frac{\sin(n\pi/q) \cosh (u/2)}{\sinh^2 (u/2) +  \sin^2 (n\pi/q)}
du\\
&+\frac{i}{\sqrt{16\pi t}}\int_{-\infty}^{\infty} e^{-u^2/4t}
\frac{\cos(n\pi/q) \sinh (u/2)}{\sinh^2 (u/2) +  \sin^2 (n\pi/q)}
du.
\end{align*}
Note that the first integrand in the right hand side above is an
even function in the variable of integration whereas the second
integrand is odd. As such, the second integral equals to zero and we
can write
\begin{align*}
I= \frac{2\sin(n\pi/q)}{\sqrt{16\pi t}}\int_{0}^{\infty}
\frac{e^{-u^2/4t} \cosh (u/2)}{\sinh^2 (u/2) +  \sin^2 (n\pi/q)} du.
\end{align*}
Finally, substituting the above in equation (\ref{Hejhalelliptictrace}) yields the integral representation of $\text{\rm ETr}K_M(t)$ as it appears in Theorem \ref{ellipticheattrace}.
}
\end{rmk}

\begin{rmk}\label{Selbergtraceformula}
\emph{In the case $M$ is compact, the standard trace $\text{\rm STr}K_M(t)$ is simply
the trace of the heat kernel. One immediately obtains from (\ref{heatkernelexpansioncompact}) the spectral aspect of the standard trace,
\begin{align}\label{regularizedtracecompactspectral}
\text{\rm STr}K_M(t) = \int_{M} K_M(t,x,x)d\mu(x) = \sum_{n=0}^{\infty} e^{-\lambda_nt}\,\,.
\end{align}
On the other hand, the results in this section, namely Definition \ref{standardheattrace}, integral representations (\ref{heatkernelonH0}) and (\ref{Hejhalelliptictrace}), and Theorems \ref{hyperbolicheattrace} and \ref{ellipticheattrace}, give the geometric interpretation of the standard trace, namely
\begin{align}\label{regularizedtracegeometric}
\notag \text{\rm STr}K_M(t) =&
\frac{\textrm{vol}(M)}{4\pi}
\int_{-\infty}^{\infty}e^{-(r^2+1/4)t}\tanh(\pi r)rdr\\
\notag &+ \sum_{\gamma
\in H(\Gamma)} \sum_{n=1}^{\infty} \frac{\ell_{\gamma}}{\sinh(n
\ell_{\gamma}/2)} \frac{e^{-t/4}}{\sqrt{16\pi t}} e^{-(n\ell_{\gamma})^2/4t}\\
&+\sum_{\gamma\in E(\Gamma)} 
\sum_{n=1}^{q_{\gamma}-1}\frac{e^{-t/4}}{2q_{\gamma}\sin(n\pi/q_{\gamma})}
            \int_{-\infty}^{\infty} \frac{e^{-2\pi nr / q_{\gamma} - tr^2}}{1+e^{-2\pi r}}dr\,\, .
\end{align}
The combination of (\ref{regularizedtracecompactspectral}) and (\ref{regularizedtracegeometric}) represent an instance of the Selberg trace formula as applied to the function $f(r) = e^{-tr^2}$ and its Fourier transform  $\hat{f}(u) =
(4\pi t)^{-1/2}e^{-u^2/4t}$. \\
\text{\hskip 0.2in} One can use this special case to generalize the trace formula to a larger class of functions. First, denote by $r_n$ the solutions to $\lambda_n = 1/4+ r_n^2$. The non-negativity of the eigenvalues imply that for each $n$ there are two solutions $r_n$ which are either opposite real numbers or complex conjugate numbers in the segment $[-i/2,i/2]\,\,.$ \\
\text{\hskip 0.2in}To continue, let $h(t)$ be any measurable function for which $h(t)e^{(1/4+\varepsilon)t}$ is in $L^1(\mathbb R)\,$ for some $\varepsilon>0\,.$ Multiply the right-hand side of (\ref{regularizedtracecompactspectral}) and (\ref{regularizedtracegeometric}) by $h(t)e^{t/4}$ and integrate from 0 to $\infty$ with respect to $t.$ Set
\begin{align*}
H(r)=\int_{0}^{\infty} h(t) e^{-r^2t} dt\,\,.
\end{align*}
By rewriting the absolute integrand of $H(r)$ as $|h(t)e^{(1/4+\varepsilon)t})|\cdot |e^{-(r^2+1/4+\varepsilon)t})|$ and recalling the imposed conditions on $h(t)$, it easily follows that $H(r)$ is analytic inside the horizontal strip $|\text{Im}(r) | \le 1/2 + \varepsilon'$ for some $\varepsilon'>0$ depending on $\varepsilon\,\,.$ The Fourier transform of $H(r)$ has the form
\begin{align*}
\hat{H}(u)=\int_{0}^{\infty}  h(t) \frac{1}{\sqrt{4\pi t}}e^{-u^2/4t} dt\,\,.
\end{align*}
Putting these facts together yields the Selberg trace formula in the compact case, namely
\begin{align}\label{STFcompact}
\notag \sum_{r_n} H(r_n) =&
\frac{\textrm{vol}(M)}{4\pi}
\int_{-\infty}^{\infty}H(r) \tanh(\pi r)rdr\\
\notag &+ \sum_{\gamma
\in H(\Gamma)} \sum_{n=1}^{\infty} \frac{\ell_{\gamma}}{2\sinh(n
\ell_{\gamma}/2)}\hat{H}(n\ell_{\gamma})\\
&+\sum_{\gamma\in
E(\Gamma)}\sum_{n=1}^{q_{\gamma}-1} \frac{1}{2q_{\gamma}\sin(n\pi/q_{\gamma})}
            \int_{-\infty}^{\infty} H(r) \frac{e^{-2\pi nr / q_{\gamma}}}{1+e^{-2\pi r}}dr\,\, ,
\end{align}
where the sum on the left is taken over $r_n \in (0,\infty) \cup [0,i/2]\,\,.$ We note that (\ref{STFcompact}) above agrees with the formula in Theorem 5.1 of \cite{He 76}, with $\chi$ being the trivial character of the group $\Gamma$. 
\\
%One can multiply both sides of the equation by a measurable function $h(t)$ and
%integrate with respect to $t$ to get, at least formally, the Selberg trace formula.  We will not, at this time,
%discuss the precise conditions on $h$ such that the trace formula holds.
\\
\text{\hskip 0.2in} In the case $M$ is non-compact, the regularized trace equals the trace of the heat kernel minus the contribution of the parabolic conjugacy classes. While the geometric side of the regularized trace is precisely as in (\ref{regularizedtracegeometric}), the spectral side has the following presentation
\begin{align}\label{regularizedtracenoncompactspectral}
\notag \text{\rm STr}K_M(t) =& \sum_{C(M)} e^{-\lambda_n t}  - \frac{1}{4\pi} \int_{-\infty}^{\infty} e^{-(r^2+1/4)t} \frac{\phi'}{\phi}(1/2+ir)dr\\
\notag &+ \frac{p}{2\pi} \int_{-\infty}^{\infty} e^{-(r^2+1/4)t} \frac{\Gamma'}{\Gamma}(1+ir)dr\\
&- \frac{1}{4}\Big(p - \text{Tr }\Phi(1/2) \Big)e^{-t/4} + \frac{p \log 2}{\sqrt{4\pi t}} e^{-t/4}\,\,,
\end{align}
where   $C(M)$ denotes a set of eigenvalues associated to $L^2$ eigenfunctions on $M$, $\phi(s)$ the determinant of the scattering matrix $\Phi(s)$, $\Gamma(s)$ the Euler gamma function, while $p$ the number of cusps of $M$ (see page 313 of \cite{He 83}).\\
\text{\hskip 0.2in} One can use the same argument as in the compact case to obtain the formal Selberg trace formula in the non-compact case. While the geometric side doesn't change (see the right-hand side of (\ref{STFcompact}), the spectral side is as follows:
\begin{align}\label{STFnoncompact}
\notag \text{\rm spectral side} =& \sum_{r_n} H(r_n)  - \frac{1}{4\pi} \int_{-\infty}^{\infty} H(r) \frac{\phi'}{\phi}(1/2+ir)dr\\
\notag &+ \frac{p}{2\pi} \int_{-\infty}^{\infty} H(r)\frac{\Gamma'}{\Gamma}(1+ir)dr\\
&- \frac{1}{4}\Big(p - \text{Tr }\Phi(1/2) \Big)H(0) + p \log(2) \hat{H}(0)\,\,.
\end{align}
}
\end{rmk}

\section{Results from spectral theory and the heat equation}
\label{Results from spectral theory and the heat equation}

\hskip 0.2in
In this section we present  various bounds on the heat kernel
associated to the degenerating family. As we will see, these bounds turn out to be independent of the family parameter $q$ as well as the imaginary part of the time variable $z$.  In the process,
we will also define and analyze the Poisson kernel subject to Dirichlet condition on
the hyperbolic cone of finite volume $C_{q,\delta}$,  as this Poisson kernel can be realized as a normal derivative of the heat kernel on the finite volume cone.

\hskip 0.2in The first two propositions of this section yield bounds
which are independent of $q$.

\begin{prop}
Let $R_q$ denote either a finite volume degenerating hyperbolic surface $M_q$ or an infinite volume degenerating hyperbolic cone $C_q$. 
Then the spectral expansion for the heat kernel on $R_q$ converges in the topology of smooth functions on compact sets. That is, every derivative in the space variables converges uniformly on compact sets.
\end{prop}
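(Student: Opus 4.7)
The plan is to verify the Weierstrass $M$-test in every seminorm $\|\cdot\|_{C^k(A\times A)}$, where $A$ is an arbitrary compact subset of $R_q$ at positive distance from the conical points and cusps, for each fixed $t\geq t_0>0$. Since $k$ is arbitrary, this produces convergence in the Fr\'echet topology of $C^\infty(A\times A)$, which is exactly what the proposition asserts.

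First take $R_q=M_q$ and treat the discrete part $\sum_n e^{-\lambda_n t}\phi_n(x)\phi_n(y)$ of (\ref{heatkernelexpansioncompact})--(\ref{heatkernelexpansionnoncompact}). Away from the singular locus each $\phi_n$ satisfies the classical equation $\Delta\phi_n=\lambda_n\phi_n$, so iterated interior elliptic regularity on $A\Subset A'\Subset R_q\setminus\{\text{singular points}\}$ together with Sobolev embedding yields $\|\phi_n\|_{C^k(A)}\leq C_{A,A',k}(1+\lambda_n)^{k/2+1}$, using $\|\phi_n\|_{L^2}=1$. Combined with the Weyl-type asymptotics $\lambda_n\sim c\,n$ for the Friedrichs Laplacian on a surface with conical singularities (as in \cite{Ju 95}), the general term of the series is bounded in $C^k(A\times A)$ by $C(1+n)^{k+2}e^{-\lambda_n t}$, which is summable with a tail majorized uniformly for $t$ in compact subsets of $(0,\infty)$. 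For the continuous part in (\ref{heatkernelexpansionnoncompact}), the same interior estimate applied to $\Delta E_{\mathrm{par};M,P}(1/2+ir,\cdot)=(1/4+r^2)E_{\mathrm{par};M,P}(1/2+ir,\cdot)$ gives polynomial-in-$r$ bounds on every spatial derivative of the Eisenstein series on $A$; the Gaussian factor $e^{-(1/4+r^2)t}$ then dominates these polynomial factors and yields absolute convergence of the $r$-integral together with its term-by-term spatial derivatives.

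For $R_q=C_q$ with $q<\infty$ the periodization (\ref{periodization}) reduces to the finite sum $\sum_{k=0}^{q-1}K_{\mathbb{H}}(t,z,\gamma^k w)$ of smooth hyperbolic heat kernels and the $C^\infty$-convergence is immediate; alternatively, the same Sobolev/elliptic-regularity argument as above handles the generalized Bessel-type eigenfunction expansion obtained by separating variables in the polar coordinates (\ref{hyperbolicpolarcoordinates}), should one prefer to work with the spectral expansion directly.

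The central technical obstacle is supplying derivative bounds on eigenfunctions and Eisenstein series that beat the Weyl-law density of the spectrum. Interior elliptic regularity provides these cleanly once one restricts to compact subsets away from the conical and cuspidal loci, and the argument uses no input specific to the degenerating parameter $q$, which is consistent with the proposition being a fixed-$q$ statement (no uniformity in $q$ is claimed here).
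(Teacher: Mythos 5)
Your treatment of the discrete spectrum is essentially the content of Proposition 20.1 of \cite{Sh 87}, which is exactly what the paper cites for the compact case: interior elliptic regularity plus Sobolev embedding gives $\|\phi_n\|_{C^k(A)}\le C(1+\lambda_n)^{N(k)}$, and the factor $e^{-\lambda_n t}$ absorbs this polynomial growth together with the spectral density (note that in the non-compact case you only need the upper bound $\#\{n:\lambda_n\le T\}=O(T)$ for the discrete spectrum, not a Weyl asymptotic, which is just as well since Weyl's law there involves the scattering determinant). Where you genuinely diverge from the paper is the continuous spectrum: you estimate the Eisenstein contribution in \eqref{heatkernelexpansionnoncompact} directly, whereas the paper exhausts the non-compact surface by compact domains $\Omega_{m,q}$, runs the Shubin argument for the Dirichlet heat kernels (with the Sobolev embedding replaced by Corollary 7.11 of \cite{GT 83}), and invokes the smooth convergence $K^{D}_{\Omega_{m,q}}\to K_{M_q}$ from \cite{Ch 84}. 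The exhaustion supplies precisely the ingredient your route leaves open: bootstrapping from $\Delta E_{\mathrm{par};M,P}=(1/4+r^2)E_{\mathrm{par};M,P}$ via interior regularity only converts an $L^2(A')$ bound into $C^k(A)$ bounds, so you still must show that $\|E_{\mathrm{par};M,P}(1/2+ir,\cdot)\|_{L^2(A')}$ is polynomially bounded in $r$. This is true --- it follows, for instance, from Bessel's inequality applied to the spectral expansion of $K_M(t,x,x)$ for small $t$, or from the Maass--Selberg relation --- but it is not a consequence of elliptic regularity alone and needs to be stated and proved; as written, this is the one real gap in your argument.

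Two smaller points. For the infinite-volume cone $C_q$, the finite periodization $\sum_{k=0}^{q-1}K_{\mathbb H}(t,z,\gamma^k w)$ shows that $K_{C_q}$ is smooth, but the proposition asserts convergence of the \emph{spectral} expansion, which for $C_q$ is a continuous-spectrum integral; your separation-of-variables alternative is the right idea but would again need quantitative bounds, and the paper's exhaustion argument covers this case with no extra work. Finally, you restrict to compact sets bounded away from the conical points, while the statement allows compact sets containing them; near a cone point one should argue with the Friedrichs domain rather than classical interior regularity, a point the paper itself passes over lightly.
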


\begin{proof}
If the surface $R_q$ is compact, then the spectral expansion of the heat kernel has the following expression 
\begin{align*}
K_{R_q} (t,x,y) = \sum_{n=0}^{\infty} e^{-\lambda_{n,q}t} \phi_{n,q}(x)\phi_{n,q}(y)
\end{align*}
for an orthonormal basis of eigenfunctions $\{\phi_{n,q}\}$ with corresponding eigenvalues $\{\lambda_{n,q}\}$. Proposition 20.1 of \cite{Sh 87} (see also \cite{Ch 84} pp. 139 - 140) shows that the series above converges smoothly on compact subsets of $\mathbb{R}_{+} \times R_q\times R_q$. 

\hskip 0.2in In the non-compact case, let $\{\Omega_{m,q}\}$ be a compact exhaustion of $R_q$. The heat kernel subject to Dirichlet boundary conditions on $\Omega_{m,q}$ converges smoothly to the heat kernel on $M_q$ (see \cite{Ch 84} starting on p. 187).
Proposition 20.1 of \cite{Sh 87} makes use of Theorem 7.6 in \cite{Sh 87} (Sobolev imbedding theorem compact case). Replacing the latter result by Corollary 7.11 in \cite{GT 83}, allows to extend Proposition 20.1 of \cite{Sh 87} to the Dirichlet heat kernel on $\Omega_{m,q}$. This in turn implies that spectral expansion of the heat kernel subject to Dirichlet boundary conditions on $\Omega_{m,q}$ converges smoothly.        
\end{proof}

\begin{prop}\label{kerneluniformbounds}
Let $M_{q}$ be a compact degenerating surface. Let $\varepsilon
< \delta, x\in \partial C_{q,\varepsilon},$ and $\zeta \in
\partial C_{q,\delta}$. $\partial_{n,x}$ will denote the normal
derivative on $\partial C_{q,\varepsilon}$ with respect to the
variable $x$. For $t$ in a compact interval not containing 0, there
exists a number $C$ independent of $q$ and $t$ such that for
all $s \in \mathbb{R}$, we have the following bounds.
\begin{align}
2|K_{M_{q}}(t+is,x,\zeta)| &\le K_{M_{q}}(t,x,x) +
K_{M_{q}}(t,\zeta,\zeta) \le C\\
2|\partial_{n,x}K_{M_{q}}(t+is,x,\zeta)| &\le
\partial_{n,y}\partial_{n,z}
K_{M_{q}}(t,y,z)\Big|_{\substack{y=x \\z=x}}
+ K_{M_{q}}(t,\zeta,\zeta)\le C\\
2|K_{C_{q}}(t+is,x,\zeta)| &\le K_{C_{q}}(t,x,x) +
K_{C_{q}}(t,\zeta,\zeta) \le C\\
2|\partial_{n,x}K_{C_{q}}(t+is,x,\zeta)| &\le
\partial_{n,y}\partial_{n,z}
K_{C_{q}}(t,y,z)\Big|_{\substack{y=x \\z=x}} +
K_{C_{q}}(t,\zeta,\zeta)\le C
\end{align}
\end{prop}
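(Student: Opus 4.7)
The plan is to decompose each inequality chain into two separate claims: the middle inequality (controlling the complex-time kernel by the real-time on-diagonal kernels) and the final inequality (uniform bound by a constant $C$). The middle inequalities are elementary spectral-theoretic facts, while the uniform bound is the real work.

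For the middle inequalities in (1) and (2), I would use the smoothly-convergent spectral expansion (\ref{heatkernelexpansioncompact}) from the preceding proposition. For (1), writing
\begin{align*}
K_{M_q}(t+is,x,\zeta) = \sum_{n=0}^{\infty} e^{-\lambda_{n,q} t}\, e^{-i\lambda_{n,q} s}\, \phi_{n,q}(x)\phi_{n,q}(\zeta),
\end{align*}
using $|e^{-i\lambda_{n,q} s}|=1$ and the elementary inequality $2|ab|\le a^2+b^2$ term by term yields
\begin{align*}
2|K_{M_q}(t+is,x,\zeta)| \le \sum_{n}e^{-\lambda_{n,q} t}\bigl(\phi_{n,q}(x)^2+\phi_{n,q}(\zeta)^2\bigr)=K_{M_q}(t,x,x)+K_{M_q}(t,\zeta,\zeta).
\end{align*}
Inequality (2) follows identically: the preceding proposition allows termwise differentiation in the space variable, and applying the same AM–GM trick to $\partial_{n,x}\phi_{n,q}(x)$ and $\phi_{n,q}(\zeta)$ gives the claim. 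For the cone $C_q$ in (3)–(4), the Laplacian has both discrete and continuous spectrum, and I would use the analogous expansion (of the type (\ref{heatkernelexpansionnoncompact})) together with the pointwise AM–GM bound applied to the integrand of the continuous-spectral part.

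The harder task is the uniform upper bound by a constant $C$ independent of $q$ and of $t$ in a compact interval $[t_0,T]\subset(0,\infty)$. Here I would pass to the periodization (\ref{periodization})
\begin{align*}
K_{M_q}(t,x,x) = \sum_{\gamma\in\Gamma_q} K_{\mathbb{H}}\bigl(t,d_{\mathbb{H}}(\tilde x,\gamma\tilde x)\bigr),
\end{align*}
and split $\Gamma_q$ into the finite cyclic subgroup $\langle\gamma_q\rangle$ fixing the apex of the cone near $x$, and its complement. For $x\in\partial C_{q,\varepsilon}$ one has $\cosh\rho=1+\varepsilon q/(2\pi)$, hence $\sinh\rho\sim\varepsilon q/(2\pi)$. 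Plugging into Beardon's formula (\ref{beardon}) gives
\begin{align*}
\cosh d(\tilde x,\gamma_q^n\tilde x)=1+2\sin^2(\pi n/q)\sinh^2\rho,
\end{align*}
and the product $\sin^2(\pi n/q)\sinh^2\rho$ is a $q$-independent function of $n$ (asymptotically of order $\varepsilon^2 n^2$ for $n/q$ small), matching the pattern of horocyclic translates at a cusp. The explicit expression (\ref{heatkernelonH1}) for $K_{\mathbb{H}}$ then yields a sum over $n$ that is uniformly summable in $q$. The complementary sum over $\gamma\notin\langle\gamma_q\rangle$ is controlled by the geometric convergence $M_q\to M_\infty$ in the definition of elliptic degeneration: on $M_q\setminus C_{q,\varepsilon}$ the metrics converge, so these translates remain at a hyperbolic distance bounded below uniformly in $q$, and a standard lattice-point counting estimate on $\mathbb{H}$ handles their contribution. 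This argument applied directly to $C_q$ (where only the cyclic subgroup appears) gives (3), and the same technique applied to the derivatives of the Gaussian expression (\ref{heatkernelonH1}) in the periodization---which introduce only polynomial factors in $u$ and $d$---produces the normal-derivative bounds in (2) and (4).

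The main obstacle is the uniformity in $q$ of the periodization sum, which pivots on the cancellation $\sin^2(\pi n/q)\cdot\sinh^2\rho$ being $q$-independent: this is what makes the bound non-obvious, because naively both the order of the cyclic subgroup and the distance $\rho$ of the boundary curve blow up as $q\to\infty$. Packaging this estimate cleanly so that the same constant $C$ works simultaneously for $M_q$ and for $C_q$, and for both the kernel and its second normal derivative, is where the technical care must be concentrated.
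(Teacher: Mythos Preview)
Your plan is sound and almost certainly coincides with what the cited reference does; the paper itself gives no argument beyond pointing to Proposition~2.1 of \cite{JoLu 97a}. The spectral AM--GM step for the first inequalities is the standard mechanism, and your periodization-plus-counting argument for the uniform constant $C$ is exactly the technique the present paper deploys elsewhere (cf.\ the Stieltjes-integral bound in Lemma~\ref{constantboundlemma}).

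One small correction: for the infinite-volume cone $C_q$ the expansion is \emph{not} of the type~(\ref{heatkernelexpansionnoncompact}), since $C_q$ has no cusps and no $L^2$ eigenfunctions. You have two clean alternatives. Either invoke the abstract spectral theorem: writing $e^{-(t+is)\Delta}=e^{-t\Delta/2}e^{-is\Delta}e^{-t\Delta/2}$ with $\|e^{-is\Delta}\|=1$, Cauchy--Schwarz plus $2ab\le a^2+b^2$ gives the kernel inequality for any nonnegative self-adjoint $\Delta$, without needing an explicit eigenexpansion. Or note that the spectral measure $d\mu_{x,\zeta}$ satisfies $2|\mu_{x,\zeta}(A)|\le \mu_{x,x}(A)+\mu_{\zeta,\zeta}(A)$ for every Borel $A$ (projection + Cauchy--Schwarz), and integrate $e^{-\lambda t}$ against this. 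Either route replaces your appeal to~(\ref{heatkernelexpansionnoncompact}) and closes the argument for (3)--(4).

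Your identification of the key cancellation $\sin^2(\pi n/q)\sinh^2\rho\sim n^2\varepsilon^2/4$ as $q\to\infty$ is exactly right and is the heart of the uniformity; it is the elliptic analogue of the horocycle-translate estimate in \cite{JoLu 97a}, and the paper exploits the same phenomenon in Proposition~\ref{thmintegraloverdifferencecusps}.
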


\begin{proof}
See the proof of Proposition 2.1 of \cite{JoLu 97a}.
\end{proof}

\hskip 0.2in Our next task is to define the Poisson kernel on the
hyperbolic cone $C_{q,\delta}$ associated to an elliptic
representative of order $q$.

\begin{defn}
\emph{Let $K_{C_{q,\delta}}^{D}$ be the Dirichlet heat kernel
on the hyperbolic domain $C_{q,\delta}$. For any point $\zeta
\in
\partial C_{q,\delta}$, denote by $\partial_{n,\zeta}$ the inward normal derivative.
The Poisson kernel $P_{q,\delta}(t,x,\zeta)$ of the hyperbolic
domain $C_{q,\delta}$ is defined by
\begin{align*}
P_{q,\delta}(t,x,\zeta)=\partial_{n,\zeta}K_{C_{q,\delta}}^{D}(t,x,\zeta).
\end{align*}
}
\end{defn}

\begin{rmk}\label{Poissonkernelremark}
\emph{From the Theorem 5 on page 168 of \cite{Ch 84}, we have the
following description of the Poisson kernel. The function
$P_{q,\delta}(t,x,\zeta)$ is an integral kernel for $t>0$ with
$x\in C_{q,\delta}$ and $\zeta \in \partial C_{q,\delta}$,
which solves the following boundary value problem. Let $u=u(t,x)$
satisfy
\begin{align*}
(\Delta - \partial_t)u&=0\\
u(0,x)&=0 \\
u(t,\zeta)&=f(t,\zeta)
\end{align*}
for $\zeta\in \partial C_{q,\delta}$. Then
\begin{align*}
u(t,x) = \int_{0}^{t} \int_{\partial C_{q,\delta}}
P_{q,\delta}(t-\sigma,x,\zeta)f(\sigma,\zeta)d\rho(\zeta)d\sigma
\end{align*}
where $d\rho(\zeta)$ represents the line element on the boundary of
$C_{q,\delta}$.}
\end{rmk}

\hskip 0.2in The following lemma establishes various estimates for
the Poisson kernel which are independent of $q$.

\begin{prop}\label{poissonkernelproperties}
Let $C_{q}$ be a family of infinite volume hyperbolic cones.
For any $\delta > 0$, any $0<\varepsilon<\delta$, and any real
values $t_0,t_1>0$, the following results hold.
\begin{enumerate}[(a)]
\item For all $0<t \le t_1$, $x\in C_{q,\varepsilon}$, and $\zeta \in \partial
C_{q,\delta}$, there is a constant $M$ independent of $q$
such that
\begin{align*}
0\le P_{q,\delta}(t,x,\zeta)\le M.
\end{align*}
\item For all $t_0 \le t \le t_1$, $x\in C_{q,\delta}$, and $\zeta \in \partial
C_{q,\delta}$, there is a constant $M$ independent of $q$
such that
\begin{align*}
0\le P_{q,\delta}(t,x,\zeta)\le M.
\end{align*}
\item For fixed $s$, the $L^2$-norm $\parallel P_{q,\delta}(t+is, \cdot,
\zeta)\parallel_{2;C_{q,\delta}}$ is decreasing in $t$.
\end{enumerate}
\end{prop}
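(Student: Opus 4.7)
\emph{Proof strategy.} Parts (a) and (b) combine the maximum principle (for positivity) with a semigroup/Cauchy--Schwarz bound on $|P_{q,\delta}|$; part (c) is a direct consequence of the spectral expansion of $K^D_{C_{q,\delta}}$. For positivity, the Dirichlet heat kernel $K^D_{C_{q,\delta}}$ is non-negative on $C_{q,\delta}$ and vanishes on $\partial C_{q,\delta}$, so its inward normal derivative at a boundary point is non-negative, whence $P_{q,\delta}\ge 0$.

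For the upper bounds in (a) and (b), differentiate the semigroup composition $K^D(t,x,\zeta) = \int K^D(t/2,x,y) K^D(t/2,y,\zeta)\,d\mu(y)$ in $\zeta$ along the inward normal to obtain
\begin{align*}
P_{q,\delta}(t,x,\zeta) = \int_{C_{q,\delta}} K^D_{C_{q,\delta}}(t/2,x,y)\, P_{q,\delta}(t/2,y,\zeta)\, d\mu(y),
\end{align*}
and then apply Cauchy--Schwarz, recognising the second factor via the spectral expansion:
\begin{align*}
P_{q,\delta}(t,x,\zeta)^2 \le K^D_{C_{q,\delta}}(t,x,x)\,\cdot\,\partial_{n,\zeta_1}\partial_{n,\zeta_2} K^D_{C_{q,\delta}}(t,\zeta_1,\zeta_2)\Big|_{\zeta_1=\zeta_2=\zeta}.
\end{align*}
For part (b), $t\in[t_0,t_1]$ is bounded away from $0$; domain monotonicity $K^D_{C_{q,\delta}}\le K_{C_q}$ combined with Proposition \ref{kerneluniformbounds} bounds $K^D(t,x,x)$ uniformly in $q$, and an analogous comparison controls the mixed second normal derivative (see the obstacle below). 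For part (a), the same argument handles $t\in[t_0,t_1]$; for small $t\in(0,t_0]$ one exploits that $x\in C_{q,\varepsilon}$ is at positive hyperbolic distance from $\partial C_{q,\delta}$ (controlled, uniformly in $q$, by the explicit formula for $d_{\mathbb{H}}(\partial C_{q,\varepsilon},\partial C_{q,\delta})$ recorded in Section \ref{Geometry of elliptic degeneration}) together with off-diagonal Gaussian upper bounds on $K_{C_q}$ and parabolic regularity to control the boundary derivative.

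Part (c) follows cleanly from the spectral expansion $K^D_{C_{q,\delta}}(t,x,\zeta)=\sum_{n} e^{-\lambda_n t}\phi_n(x)\phi_n(\zeta)$, where $\{(\lambda_n,\phi_n)\}$ is the orthonormal system of Dirichlet eigenpairs on $C_{q,\delta}$. Termwise differentiation yields $P_{q,\delta}(t+is,x,\zeta)=\sum_n e^{-\lambda_n(t+is)}\phi_n(x)(\partial_n\phi_n)(\zeta)$, and orthonormality of $\{\phi_n\}$ in $L^2(C_{q,\delta})$ gives
\begin{align*}
\|P_{q,\delta}(t+is,\cdot,\zeta)\|_{2;C_{q,\delta}}^2 \,=\, \sum_{n} e^{-2\lambda_n t}\,|(\partial_n\phi_n)(\zeta)|^2,
\end{align*}
which is independent of $s$ and manifestly decreasing in $t$ since every $\lambda_n\ge 0$.

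The hard step is the uniform-in-$q$ bound on the mixed second normal derivative $\partial_{n,\zeta_1}\partial_{n,\zeta_2} K^D_{C_{q,\delta}}(t,\zeta,\zeta)$ at a boundary point. A pointwise inequality $K^D_{C_{q,\delta}}\le K_{C_q}$ does not automatically transfer to boundary derivatives, so I would either verify that the non-negative remainder $K_{C_q}-K^D_{C_{q,\delta}}$ has controlled boundary behaviour (via a Hopf-type argument applied in both variables) and then invoke Proposition \ref{kerneluniformbounds}, or directly adapt the spectral/maximum-principle argument from the proof of Proposition 2.1 of \cite{JoLu 97a} to the Dirichlet heat kernel on $C_{q,\delta}$.
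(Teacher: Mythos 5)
Your positivity argument agrees with the paper's, and your part (c) is correct by a genuinely different route: the paper computes $\partial_t\|P\|_2^2$ directly via Green's theorem on $u+iv$, whereas your spectral expansion $\|P_{q,\delta}(t+is,\cdot,\zeta)\|_2^2=\sum_n e^{-2\lambda_n t}|\partial_n\phi_n(\zeta)|^2$ (modulo justifying termwise boundary differentiation) gives monotonicity in $t$ and, as a bonus, the $s$-independence that the paper proves separately in Corollary \ref{constantinsnorm}. The genuine gap is in the upper bounds of (a) and (b), which are the substance of the proposition. Your Cauchy--Schwarz reduction leaves you needing a $q$-uniform bound on $\partial_{n,\zeta_1}\partial_{n,\zeta_2}K^D_{C_{q,\delta}}(t,\zeta_1,\zeta_2)\big|_{\zeta_1=\zeta_2=\zeta}$ for the \emph{Dirichlet} kernel at a \emph{boundary} point. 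Proposition \ref{kerneluniformbounds} does not supply this (it controls the free kernels $K_{M_{q}}$, $K_{C_{q}}$ at interior points); the pointwise inequality $K^D_{C_{q,\delta}}\le K_{C_q}$ does not pass to normal derivatives at the boundary, since the majorant does not vanish there and its difference quotient diverges; and even granting the bound, the mixed second derivative on the boundary diagonal blows up like $t^{-2}$ as $t\to 0$, so the product $K^D(t,x,x)\cdot\partial_{n}\partial_{n}K^D(t,\zeta,\zeta)$ cannot yield part (a) on all of $0<t\le t_1$. You flag all of this yourself, but the proposed repairs (Hopf-type estimates, $q$-uniform off-diagonal Gaussian bounds near a cone tip whose angle is collapsing) are precisely where the work lies and are left unexecuted.

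The paper avoids the second derivative entirely. It unfolds $K^D_{C_{q,\delta}}$ over the fundamental sector $\mathcal{F}$ in the disc, compares $\mathcal{F}$ with the half-disc $\mathcal{V}$ cut off by the geodesic tangent to $\partial\mathcal{F}$ at $\tilde\zeta$, and applies the method of images across that geodesic to get the first-order inequality $\partial_{n,\tilde\zeta}K^D_{\mathcal{F}}(t,\tilde x,\tilde\zeta)\le 2\,\partial_{n,\tilde\zeta}K_{\mathbb{D}}(t,\tilde x,\tilde\zeta)$; the comparison is legitimate at the level of first normal derivatives precisely because the image-kernel difference also vanishes at $\tilde\zeta$. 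Summing over the isotropy group gives $P_{q,\delta}(t,x,\zeta)\le 2\,\partial_{n,\zeta}K_{C_{q}}(t,x,\zeta)$, and uniformity in $q$ then follows from the convergence of first derivatives in (\ref{convergenceheatkernels2}) of Theorem \ref{convergenceheatkernels}: for (a) one uses that $x\in C_{q,\varepsilon}$ and $\zeta\in\partial C_{q,\delta}$ stay a positive distance apart, so the convergence is uniform down to $t=0$; for (b) one uses $t\ge t_0>0$. If you wish to keep your semigroup identity, the cleanest repair is to insert this reflection bound on $P_{q,\delta}(t/2,\cdot,\zeta)$ rather than attempting to control the mixed second normal derivative.
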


\begin{proof}
The proof follows the template given in Proposition 2.3
of \cite{JoLu 97a} with some minor changes which we now present. 

\hskip 0.2in The Dirichlet heat
kernel is non-negative for all values of the parameters and equal to
zero when either $x$ or $\zeta$ lie on the boundary of the cone
$C_{q, \delta}$. In turn, the difference quotient whose limit
is the Poisson kernel, is a limit of non-negative functions, so that
the Poisson kernel is non-negative. This proves the lower bounds in parts a) and b).

\hskip 0.2in To prove part (c), we break up the Poisson kernel as
$u+iv$ for real valued functions $u$ and $v$. With $s \in
\mathbb{R}$ fixed and $\zeta \in \partial C_{q,\delta}$ we can
write
\begin{align*}
\partial_t \parallel P_{q,\delta}(t+is, \cdot,
\zeta)\parallel_{2;C_{q,\delta}}^{2} &= \partial_t
\int_{C_{q,\delta}} (u^2+v^2)d\mu \\
&= 2\int_{C_{q,\delta}} (uu_t+vv_t)d\mu \\
&= 2\int_{C_{q,\delta}} (u \Delta u+v \Delta v)d\mu \\
&= -2 \int_{C_{q,\delta}} (\langle\text{grad } u, \text{grad }u\rangle +
\langle\text{grad }v, \text{grad }v\rangle)d\mu \\
&\le 0
\end{align*}
where in the last equality we use Green's theorem as applied to functions that vanish on the boundary of $C_{q,\delta}.$

\hskip 0.2in It remains to consider the upper bounds in part (a) and
(b). In the process, we will show that the Poisson kernel is bounded above by the normal derivative of the Dirichlet heat kernel.
Recall from Section \ref{Geometry of elliptic degeneration}, that a fundamental domain for the finite
volume hyperbolic cone $C_{q,\delta}$ is given by the following
subset of the hyperbolic unit disc,
\begin{align*}
\mathcal{F} = \{\rho e^{i\theta}\in \mathbb{D} : 0\le \rho <
\cosh^{-1}(1+\delta q /2\pi),0\le \theta < 2\pi/q\}.
\end{align*}
Let $K_\mathcal{F}^D$ denote the Dirichlet heat kernel of the domain
$\mathcal{F}$. Given a point $\tilde{\zeta}$ on the boundary of
$\mathcal{F}$, there is a unique geodesic $g$ (which depends on the
choice of point and fundamental domain) tangent to the boundary of
$\mathcal{F}$ at $\tilde{\zeta}$. The geodesic $g$ separates the
hyperbolic unit disc into two components. Denote by $\mathcal{V}$
the component containing the fundamental domain $\mathcal{F}$ (see
Fig. \ref{geodesic3}). Associated to this component we have the Poisson kernel
which we denote by $K_{\mathcal{V}}^D$. Using the isotropy subgroup
for $C_{q}$ and the definition of the Poisson kernel, we can
write
\begin{align}\label{poissonkernelperiodization}
P_{q,\delta}(t,x,\zeta) = \partial_{n,\zeta}
K_{C_{q,\delta}}^D(t,x,\zeta) = \partial_{n,\tilde{\zeta}}
\sum_{k=1}^{q} K_{\mathcal{F}}^D (t, \exp(2\pi i
k/q)\tilde{x},\tilde{\zeta}).
\end{align}
Let $p:\mathbb{R}\to \mathbb{D}$ be the unique geodesic parametrized
by arclength defined by the following conditions:
$p(0)=\tilde{\zeta}$, $p$ is perpendicular to $g$ at
$\tilde{\zeta}$, and $p(\mathbb{R}_{+})\subset \mathcal{V}$. Since
$\mathcal{F} \subset \mathcal{V}$, we have that $K_{\mathcal{F}}^D
\le K_{\mathcal{V}}^D$, and since $p(0)$ lies on the boundary of
$C_{q,\delta}$, we have that
\begin{align}\label{dirichletheatkernelbound}
\partial_{n,\tilde{\zeta}}
K_{\mathcal{F}}^D (t, \tilde{x},\tilde{\zeta}) = \lim_{h\to 0}
\frac{K_{\mathcal{F}}^D (t, \tilde{x},p(h))}{h} \le \lim_{h\to 0}
\frac{K_{\mathcal{V}}^D (t, \tilde{x},p(h))}{h}.
\end{align}
Because the boundary of $\mathcal{V}$ is a geodesic (namely $g$),
the method of images implies the equality
\begin{align*}
K_{\mathcal{V}}^D (t, \tilde{x},p(h)) = K_{\mathbb{D}} (t,
\tilde{x},p(h)) - K_{\mathbb{D}} (t, \tilde{x},p(-h)).
\end{align*}
From this and equation (\ref{dirichletheatkernelbound}), we can
write
\begin{align*}
\partial_{n,\tilde{\zeta}}
K_{\mathcal{F}}^D (t, \tilde{x},\tilde{\zeta}) \le 2
\partial_{n,\tilde{\zeta}} K_{\mathbb{D}} (t,
\tilde{x},\tilde{\zeta}).
\end{align*}
Differentiating the sum in (\ref{poissonkernelperiodization}) we
obtain the inequality
\begin{align}\label{poissonkernelinequality}
P_{q,\delta}(t,x,\zeta) \le 2 \partial_{n,\tilde{\zeta}}
\sum_{k=1}^{q} K_{\mathbb{D}} (t, \exp(2\pi i
k/q)\tilde{x},\tilde{\zeta}) \le 2 \partial_{n,\tilde{\zeta}}
K_{C_{q}} (t, x,\zeta)\,\,.
\end{align}
If $x$ and $\zeta$ are bounded away form each other, inequality
(\ref{poissonkernelinequality}) and the limit
(\ref{convergenceheatkernels2}) of Theorem
\ref{convergenceheatkernels} imply that
(\ref{poissonkernelinequality}) is uniformly convergent as $q
\to  \infty$ for $0 < t \le t_1$, proving the upper bound in part (a)
of the proposition. On the other hand, if $x\in C_{q,\delta}$
and $\zeta\in \partial C_{q,\delta}$, the limit
(\ref{convergenceheatkernels2}) again implies that
(\ref{poissonkernelinequality}) converges uniformly for $0 < t_1 \le
t \le t_2$, proving the upper bound in part (b) of the proposition.
\end{proof}

%%%%%%%%%%%%%%%%%%%%%%%%%%%%%%%% Figure 4
\realfig{geodesic3}{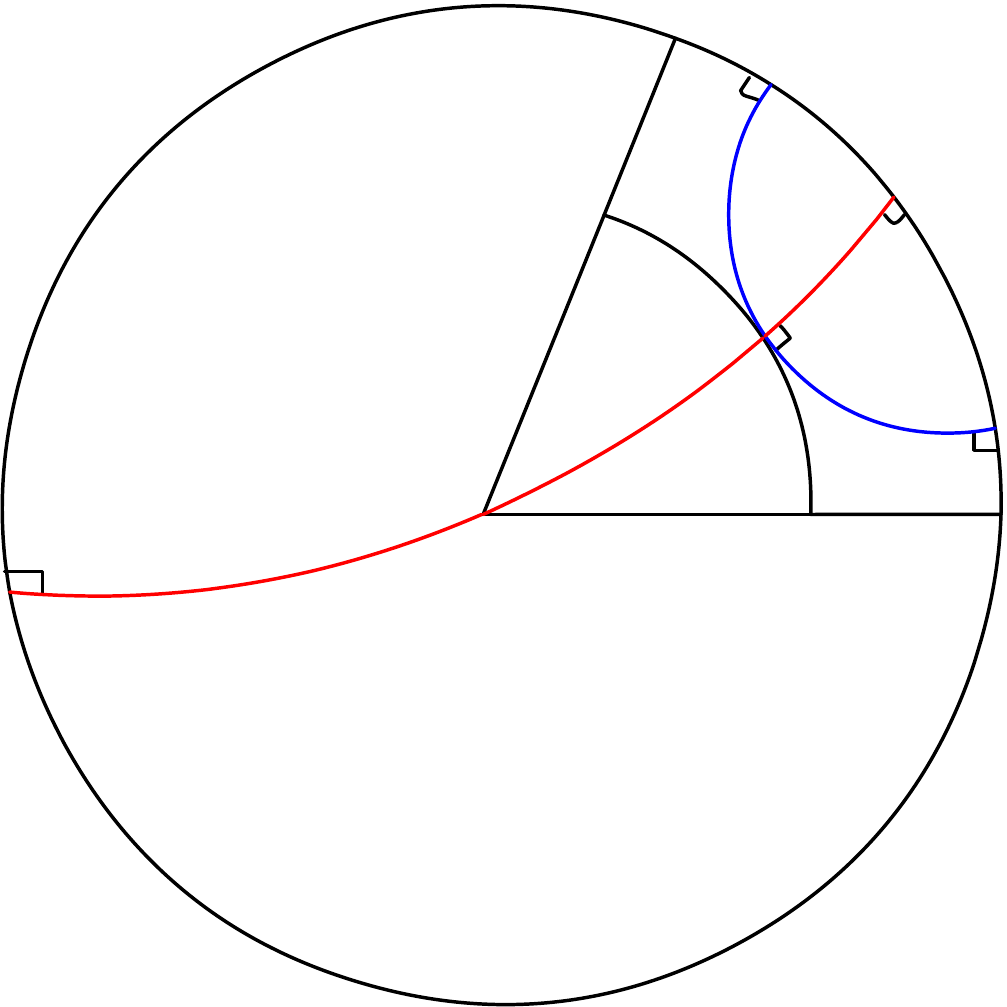}{A geometric construction in the
hyperbolic disc model
$$ {
 \at{-4\pscm}{4.2\pscm}{$\mathcal{F}$}
 \at{-5.1\pscm}{2\pscm}{$\mathcal{V}$}
 \at{-5\pscm}{3\pscm}{$0$}
 \at{-3.3\pscm}{4.3\pscm}{$\tilde{\zeta}$}
 \at{-6.3\pscm}{3.3\pscm}{\color{red}$p$\color{black}}
 \at{-4.3\pscm}{5.3\pscm}{\color{blue}$g$\color{black}}
}
$$}{5.5cm}
%%%%%%%%%%%%%%%%%%%%%%%%%%%%%%%%

\begin{lemma} \label{greentheoremlemma}
Let $z=t+is$ with fixed $t$. Let $x\in C_{q,\delta}$ and
suppose that $f(z,x)$ is a $C^2$ function for which
$(\partial_z-\Delta_x)f(z,x)=0$. Write $f=u+iv$ for some real-valued
functions $u$ and $v$. Then we have
\begin{align*}
\partial_s \parallel f(t+is,\cdot)\parallel_{2;C_{q,\delta}}^2 = 4\int_{\partial
C_{q,\delta}} (v \partial_n u - u \partial_n v)d \rho.
\end{align*}
\end{lemma}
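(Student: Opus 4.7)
The strategy is a two-step reduction: first exploit the complex-time heat equation to rewrite $\partial_s f$ as a spatial Laplacian of $f$, then apply Green's second identity on $C_{q,\delta}$ to convert the resulting volume integral into the claimed boundary integral.

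Concretely, I write $f=u+iv$ for real-valued $u,v$ and observe that, since $f$ is holomorphic in $z=t+is$, the Cauchy--Riemann identity gives $\partial_s f = i\,\partial_t f$. Combined with $(\partial_z-\Delta_x)f=0$ (which reads $\partial_t f=\Delta_x f$ for holomorphic $f$), this produces $\partial_s f = i\,\Delta_x f$. Separating real and imaginary parts,
\begin{align*}
\partial_s u = -\Delta_x v, \qquad \partial_s v = \Delta_x u.
\end{align*}

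Next, since $f\in C^2$ and $C_{q,\delta}$ has finite hyperbolic volume, I differentiate under the integral and substitute the identities above:
\begin{align*}
\partial_s \|f(t+is,\cdot)\|_{2;C_{q,\delta}}^{2}
 = 2\int_{C_{q,\delta}}\bigl(u\,\partial_s u + v\,\partial_s v\bigr)\,d\mu
 = 2\int_{C_{q,\delta}}\bigl(v\,\Delta_x u - u\,\Delta_x v\bigr)\,d\mu.
\end{align*}

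Finally, I invoke Green's second identity on $C_{q,\delta}$ to obtain
\begin{align*}
\int_{C_{q,\delta}}\bigl(v\,\Delta_x u - u\,\Delta_x v\bigr)\,d\mu
  = \int_{\partial C_{q,\delta}}\bigl(v\,\partial_n u - u\,\partial_n v\bigr)\,d\rho,
\end{align*}
which when combined with the previous display produces the stated formula, up to the overall constant that is fixed by the paper's normalization of $\partial_z$ (Wirtinger vs.\ real derivative) used throughout Section~3.

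The one technical subtlety I anticipate is the conical singularity at the apex of $C_{q,\delta}$, which is interior to the domain and so in principle obstructs a direct application of Green's theorem. I will handle this in standard fashion: apply Green's identity on the smooth subdomain $C_{q,\delta}\setminus B_\varepsilon$ for a small geodesic disk $B_\varepsilon$ about the apex, and let $\varepsilon\to 0$. The spurious boundary term over $\partial B_\varepsilon$ vanishes in the limit because the geodesic length $|\partial B_\varepsilon|$ shrinks to zero while $\nabla f$ stays bounded on a neighborhood of the apex by the $C^2$ hypothesis. Aside from this bookkeeping, the proof reduces to the algebra shown above.
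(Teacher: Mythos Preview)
Your argument is correct and follows essentially the same route as the paper: differentiate $\|f\|_2^2$ in $s$, use the heat equation to convert $\partial_s u,\partial_s v$ into $\Delta_x v,\Delta_x u$, then apply Green's second identity. The paper does exactly this, writing ``since $t$ is fixed, $\partial_z=(-i/2)\partial_s$'' and proceeding through the same three lines; your factor of $2$ versus the paper's $4$ is precisely the Wirtinger-vs.-holomorphic-derivative discrepancy you already flagged, and your added care about excising the conical apex before applying Green is a point the paper's proof simply omits.
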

\begin{proof}
Since $t$ is fixed, we have that $\partial_z = (-i/2) \partial_s$. This
together with the heat equation imply
\begin{align*}
\Delta_x f &= \Delta_x u + i \Delta_x v = \partial_z u + i \partial_z v \\
&= -\frac{i}{2} \partial_s u + \frac{1}{2} \partial_s v.
\end{align*}
Then we can write
\begin{align*}
\partial_s \parallel f(t+is,\cdot)\parallel_{2,C_{q,\delta}}^2 &=
\partial_s \int_{C_{q,\delta}} (u^2 + v^2)d \mu\\
&= 2\int_{C_{q,\delta}} (u\partial_s u + v\partial_s v)d \mu\\
&=4\int_{C_{q,\delta}} (-u\Delta_x v + v\Delta_x u)d \mu\\
&= 4\int_{\partial C_{q,\delta}} (-u\partial_n v + v\partial_n
u)d \rho
\end{align*}
where the last equality follows from Green's theorem.
\end{proof}

\begin{cor}\label{constantinsnorm}
For fixed $t>0$ and $\zeta \in \partial C_{q,\delta}$, the
$L^2$-norm
\begin{align*}
\parallel P_{q,\delta}(t+is, \cdot, \zeta)\parallel_{2;C_{q,\delta}}
\end{align*}
is constant in $s$.
\end{cor}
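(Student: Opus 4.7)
The plan is to apply Lemma \ref{greentheoremlemma} directly to the function $f(z,x) = P_{q,\delta}(z,x,\zeta)$, viewed as a function of $z = t+is$ and $x \in C_{q,\delta}$ with $\zeta \in \partial C_{q,\delta}$ held fixed. For this to be legitimate I need to check two things: that $f$ satisfies the heat equation $(\partial_z - \Delta_x)f = 0$, and that the resulting boundary integral vanishes.

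For the first point, I would invoke the definition $P_{q,\delta}(t,x,\zeta) = \partial_{n,\zeta} K^D_{C_{q,\delta}}(t,x,\zeta)$, together with the fact that the Dirichlet heat kernel satisfies the heat equation in the $(t,x)$ variables. Since $\partial_{n,\zeta}$ is an operator in the $\zeta$-variable alone, it commutes with both $\partial_t$ and $\Delta_x$, so $(\partial_t - \Delta_x)P_{q,\delta}(t,x,\zeta) = 0$. The extension to complex time $z=t+is$ proceeds as in Remark \ref{complexrealtime}.

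For the second point, I would use that the Dirichlet heat kernel $K^D_{C_{q,\delta}}(t,y,\zeta)$ vanishes whenever $y \in \partial C_{q,\delta}$. By the symmetry $K^D_{C_{q,\delta}}(t,y,\zeta) = K^D_{C_{q,\delta}}(t,\zeta,y)$, this means that for each fixed $y \in \partial C_{q,\delta}$, the function $\zeta \mapsto K^D_{C_{q,\delta}}(t,y,\zeta)$ is identically zero on all of $C_{q,\delta}$. Consequently its normal derivative $\partial_{n,\zeta} K^D_{C_{q,\delta}}(t,y,\zeta)$ also vanishes there, so $P_{q,\delta}(t+is, y, \zeta) = 0$ for all $y \in \partial C_{q,\delta}$. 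Writing $P_{q,\delta} = u + iv$, we therefore have $u = v = 0$ on $\partial C_{q,\delta}$, which immediately forces the boundary integrand $v\partial_n u - u \partial_n v$ to vanish pointwise on $\partial C_{q,\delta}$.

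Combining these two observations with Lemma \ref{greentheoremlemma} gives
\begin{align*}
\partial_s \parallel P_{q,\delta}(t+is, \cdot, \zeta)\parallel_{2;C_{q,\delta}}^2 = 4\int_{\partial C_{q,\delta}}(v\partial_n u - u\partial_n v)\,d\rho = 0,
\end{align*}
so the $L^2$-norm is constant in $s$, completing the proof. I do not expect any genuine obstacle here; the one technical point worth being careful about is justifying that $P_{q,\delta}(t+is,\cdot,\zeta)$ has enough regularity up to $\partial C_{q,\delta}$ to apply Green's theorem, but this follows from standard parabolic boundary regularity for the Dirichlet heat kernel on a domain with sufficiently smooth boundary.
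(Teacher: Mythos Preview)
Your proof is correct and follows the same approach as the paper: apply Lemma \ref{greentheoremlemma} to $f(z,x)=P_{q,\delta}(z,x,\zeta)$ and observe that the boundary integrand vanishes because the Poisson kernel is zero on $\partial C_{q,\delta}$ in the $x$-variable. You supply more detail than the paper does (explicitly checking the heat equation and justifying the boundary vanishing via the symmetry of $K^D$), but the argument is the same.
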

\begin{proof}
For fixed $\zeta \in \partial C_{q,\delta}$, define
\begin{align*}
f(t+is,x) = P_{q,\delta}(t+is, x , \zeta)
\end{align*}
and apply the previous lemma. Note that the Poisson kernel vanishes
at boundary values, so that the integrand in the previous lemma is
identically zero.
\end{proof}

\begin{lemma}\label{supremumboundlemma}
\mbox{}
\begin{enumerate}[(a)]
\item
Let $0< \varepsilon < \delta < 1/2$, and $t_0 \ge 0$. There exists a
number $C$ independent of $q$ such that for $0 \le t \le t_0$
\begin{align*}
\sup_{\substack{x \in C_{q,\varepsilon} \\ \zeta \in \partial
C_{q,\delta}}} |K_{M_{q}}-K_{C_{q}}|(t,x,\zeta) \le
C
\end{align*}
\item
Let $0 < \varepsilon < \delta$. For fixed $t > 0$ and $\zeta \in
\partial C_{q,\delta}$,
\begin{align*}
\parallel(K_{M_{q}}-K_{C_{q}})(t+is, \cdot, \zeta)\parallel_{2;C_{q,\varepsilon}}
\le C \sqrt{1+|s|}
\end{align*}
\end{enumerate}
\end{lemma}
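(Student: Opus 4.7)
\emph{Plan for (a).} My plan is to establish a Poisson-kernel representation for the difference $K_{M_q}-K_{C_q}$ on the finite-volume cone $C_{q,\delta}$. For $\zeta'$ in the interior of $C_{q,\delta}$ one has
\begin{equation*}
K_{M_q}(t,x,\zeta') = K^D_{C_{q,\delta}}(t,x,\zeta') + \int_0^t\!\!\int_{\partial C_{q,\delta}} P_{q,\delta}(t-\sigma,x,\xi)\,K_{M_q}(\sigma,\xi,\zeta')\,d\rho(\xi)\,d\sigma,
\end{equation*}
since the difference of the two sides solves the heat equation on $C_{q,\delta}$ with vanishing initial and boundary data, and an analogous identity holds for $K_{C_q}$. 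Sending $\zeta'\to\zeta\in\partial C_{q,\delta}$ annihilates the Dirichlet term, and subtraction gives
\begin{equation*}
(K_{M_q}-K_{C_q})(t,x,\zeta) = \int_0^t\!\!\int_{\partial C_{q,\delta}} P_{q,\delta}(t-\sigma,x,\xi)\,(K_{M_q}-K_{C_q})(\sigma,\xi,\zeta)\,d\rho(\xi)\,d\sigma.
\end{equation*}
By Proposition \ref{poissonkernelproperties}(a) the factor $P_{q,\delta}$ is uniformly bounded for $x\in C_{q,\varepsilon}$ and $\xi\in\partial C_{q,\delta}$, and the length of $\partial C_{q,\delta}$ equals $(4\pi\delta/q+\delta^2)^{1/2}$, uniformly bounded in $q$. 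For the boundary integrand, I write both heat kernels as $\Gamma$-periodizations of $K_{\mathbb H}$: the two series share the terms indexed by the elliptic cyclic group $\Gamma_\gamma$, so their difference sums only over $\eta\in \Gamma\setminus\Gamma_\gamma$, and for such $\eta$ the distance $d(\tilde\xi,\eta\tilde\zeta)$ is bounded below by a positive constant $d_0$ independent of $q$. This gives uniform convergence and a uniform bound $|(K_{M_q}-K_{C_q})(\sigma,\xi,\zeta)|\le C_1$, which combines with the previous estimates to prove part (a).

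\emph{Plan for (b).} I apply Lemma \ref{greentheoremlemma} to $f(z,x):=(K_{M_q}-K_{C_q})(z,x,\zeta)$ on the domain $C_{q,\varepsilon}$; this is legitimate because $\zeta\in\partial C_{q,\delta}$ lies outside $\overline{C_{q,\varepsilon}}$, so $f$ is smooth and heat-equation solving there. Writing $f=u+iv$, the lemma yields
\begin{equation*}
\partial_s\|f(t+is,\cdot)\|^{2}_{2;C_{q,\varepsilon}} = 4\int_{\partial C_{q,\varepsilon}}(v\,\partial_n u-u\,\partial_n v)\,d\rho.
\end{equation*}
The pointwise estimate $|v\partial_n u-u\partial_n v|\le 2|f||\partial_n f|$ combined with inequalities (1) and (2) of Proposition \ref{kerneluniformbounds}, applied separately to $K_{M_q}$ and $K_{C_q}$, gives $|f|,|\partial_n f|\le C$ on $\partial C_{q,\varepsilon}$ uniformly in $q$ and $s$, for $t$ ranging over any compact subinterval of $(0,\infty)$. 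Since $\partial C_{q,\varepsilon}$ has uniformly bounded length, $|\partial_s\|f(t+is,\cdot)\|_2^2|\le C_2$; integrating in $s$ from $0$ yields $\|f(t+is,\cdot)\|_2^2\le\|f(t,\cdot)\|_2^2+C_2|s|$. The base case $s=0$ is controlled by part (a) together with $\mathrm{vol}(C_{q,\varepsilon})=\varepsilon$, giving $\|f(t,\cdot)\|_2^2\le C^{2}\varepsilon$ and hence $\|f(t+is,\cdot)\|_2\le C\sqrt{1+|s|}$.

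\emph{Main obstacle.} The essential non-routine input is the uniform-in-$q$ lower bound $d(\tilde\xi,\eta\tilde\zeta)\ge d_0>0$ for $\eta\in\Gamma\setminus\Gamma_\gamma$ with $\xi,\zeta\in\partial C_{q,\delta}$. This geometric fact has to be extracted from the definition of elliptic degeneration: such $\eta$ correspond to paths joining $\xi$ to $\zeta$ that must exit and re-enter the collar surrounding the cone, and the uniform lower bound on their length reflects the assumed convergence of the geometry of $M_q\setminus C_{q,\delta}$ to that of $M_\infty\setminus(m\times C_{\infty,\delta})$. Once this input is in hand, the remainder of the argument is a routine Duhamel-plus-Green's-identity scheme, paralleling the corresponding estimates in \cite{JoLu 97a}.
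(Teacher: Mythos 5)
Your part (b) is essentially the paper's argument: apply Lemma \ref{greentheoremlemma} to $f=(K_{M_{q}}-K_{C_{q}})(\cdot,\cdot,\zeta)$, bound the resulting Green's-identity boundary integral by the kernel and normal-derivative estimates of Proposition \ref{kerneluniformbounds}, use the uniformly bounded boundary length, integrate in $s$, and anchor at $s=0$ via part (a). (You take the boundary circle to be $\partial C_{q,\varepsilon}$; the paper writes the integral over $\partial C_{q,\delta}$ but then computes the length of $\partial C_{q,\varepsilon}$, so your version is the internally consistent one.)

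For part (a) you take a genuinely different and more roundabout route, and it contains a real gap. The paper's proof is a short maximum-principle argument: $D=K_{M_{q}}-K_{C_{q}}\ge 0$ by inclusion of the fundamental groups; as a function of $(t,x)$ on $C_{q,\varepsilon}$ it solves the heat equation with zero initial data (since $\zeta\in\partial C_{q,\delta}$ lies outside $\overline{C_{q,\varepsilon}}$), so its supremum is attained on the parabolic boundary, where one drops $-K_{C_{q}}\le 0$ and bounds $K_{M_{q}}(\tau,z,\zeta)$ for $z\in\partial C_{q,\varepsilon}$, $\zeta\in\partial C_{q,\delta}$ directly by Proposition \ref{kerneluniformbounds}. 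This bypasses the Poisson kernel and the periodization entirely. Your Duhamel representation is legitimate, but it shifts the whole difficulty into the boundary-to-boundary estimate $|(K_{M_{q}}-K_{C_{q}})(\sigma,\xi,\zeta)|\le C_1$ for $\xi,\zeta\in\partial C_{q,\delta}$, uniformly in $q$ and in $0\le\sigma\le t_0$ — and, as you yourself flag, this is asserted rather than proved. It is a statement of at least the same strength as the lemma being proved (note $\xi$ may approach $\zeta$, so you cannot bound $K_{M_{q}}$ and $K_{C_{q}}$ separately near $\sigma=0$; you must use the cancellation of the diagonal terms). The two ingredients it requires — the uniform lower bound $d(\tilde\xi,\eta\tilde\zeta)\ge d_0>0$ for $\eta\in\Gamma\setminus\Gamma_\gamma$, which follows from the fact that a path realizing such an $\eta$ must twice cross the collar between $\partial C_{q,\delta}$ and $\partial C_{q,\delta'}$ ($\delta<\delta'<1/2$), whose width is bounded below uniformly in $q$; and a $q$-uniform bound on the orbit-counting function at points of $\partial C_{q,\delta}$, which is Lemma 4 of \cite{JoLu 95} and is what the paper invokes in Lemma \ref{constantboundlemma} — are each nontrivial and neither is supplied. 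So the gap is fillable, but as written the crux of (a) is missing; the maximum-principle route lets Proposition \ref{kerneluniformbounds} carry that weight instead, which is why the paper's proof of (a) is only a few lines.
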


\begin{proof}
The inclusion of the fundamental groups gives the lower bound
\begin{align*}
(K_{M_{q}}-K_{C_{q}})(t,x,\zeta)\ge 0.
\end{align*}
The function $(K_{M_{q}}-K_{C_{q}})(t,x,\zeta)$ is a
solution to the heat equation in $t$ and $x$ on the domain
$C_{q,\varepsilon}$ with zero initial data. By applying the
maximum principle and the positivity of the heat kernel
$K_{C_{q}}$ as in Lemma \ref{threeintegralslemma}, we can write
\begin{align*}
(K_{M_{q}}-K_{C_{q}})(t,x,\zeta) \le \sup_{\substack{z \in
\partial C_{q,\varepsilon} \\ w \in \partial
C_{q,\varepsilon}\\ 0\le \tau \le t}} K_{M_{q}}(\tau,z,w).
\end{align*}
Using the above inequality together with the first bound in
Proposition \ref{kerneluniformbounds} we complete part (a) of this
lemma.

\hskip 0.2in To prove part (b), first note that from part (a) we have that the sup norm is bounded. Consequently, the $L^2$ norm is bounded and we can
write
\begin{align*}
\parallel (K_{M_{q}}-K_{C_{q}})(t+is,\cdot,\zeta)\parallel_{
2;C_{q,\varepsilon}}^2 \le C_1
\end{align*}
for some $C_1$ independent of $q$. We then substitute
$(K_{M_{q}}-K_{C_{q}})(t,x,\zeta)$ for $f$ in the
statement of Lemma \ref{greentheoremlemma} and use the bounds in
Proposition \ref{kerneluniformbounds} to write
\begin{align*}
\Big| \partial_s \parallel
(K_{M_{q}}-K_{C_{q}})(t+is,\cdot,\zeta)\parallel_{
2;C_{q,\varepsilon}}^2 \Big|  &\le  4\int_{\partial C_{q,\delta}} \left|-u\partial_n v + v\partial_n
u\right| d \rho \\
&\le C \cdot \text{len}(\partial C_{q,\delta}) = C \sqrt{4\pi\varepsilon / q + \varepsilon^2}\\
&\le C_2\,\,
\end{align*}
for some $C_2$ independent of $q$ (since $q >2$ and $\varepsilon < 1/2$). Integrate the last
inequality and get
\begin{align*}
\parallel (K_{M_{q}}-K_{C_{q}})(t+is,\cdot,\zeta)\parallel_{
2;C_{q,\varepsilon}}^2 \le C_1 + C_2|s|.
\end{align*}
We complete the proof by taking the square root.
\end{proof}

\section{Analysis of elliptic heat trace}
\label{Analysis of elliptic heat trace}

\hskip .2in In this section, we estimate the trace of the heat
kernel on the hyperbolic cone $C_q$ both over all of $C_q$ as well
as over the truncated conical region $C_q\backslash C_{q,\delta}$ .
The main technical computation is Proposition
\ref{thmintegraloverdifferencecusps}, which is analogous to Theorem
3.1 of \cite{JoLu 97a}. The remainder of the section states corollaries which we derive from 
Proposition
\ref{thmintegraloverdifferencecusps}.
\begin{prop}\label{thmintegraloverdifferencecusps}
For any $\delta > 0$ and $z = t + is$ with $t > 0$, set
\begin{equation*}
\eta = \frac{t}{4(t^2+s^2)} \textrm{\quad and \quad} \gamma =
\log\left(1+\left(\frac{\delta}{2\pi}\right)^2\right).
\end{equation*}
Then, if we let $\zeta_{\mathbb{Q}}$ denote the Riemann zeta
function, we have the bound
\begin{equation*}
\Bigg|\int_{C_q\backslash C_{q, \delta}}
(K_{C_q}-K_{\mathbb{H}})(z,x,x)d\mu(x)\Bigg| \le
\frac{e^{-t/4}}{\sqrt{\pi|z|}} \left(\frac{\delta}{2\pi}\right)^{-2\eta\gamma} \bigg[\zeta_{\mathbb{Q}} (1+2\eta\gamma)+\pi \bigg]\,\,.
\end{equation*}
\end{prop}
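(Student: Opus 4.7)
The strategy is to combine the periodization (\ref{periodization}) of the heat kernel with the integral expression (\ref{heatkernelonH1}) for $K_{\mathbb H}$, closely mirroring the calculation in the proof of Theorem \ref{ellipticheattrace} but with the lower limit of the radial integral moved outward to $\rho_0 := \cosh^{-1}(1+\delta q/(2\pi))$. Since $C_q = \langle \gamma_q\rangle\backslash\mathbb H$ for a rotation $\gamma_q$ of order $q$, the periodization together with (\ref{beardon}) gives $(K_{C_q}-K_{\mathbb H})(z,x,x) = \sum_{n=1}^{q-1} K_{\mathbb H}(z, d_n(\rho))$ with $\cosh d_n(\rho) = 1 + 2\sin^2(n\pi/q)\sinh^2\rho$. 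Passing to $(\rho,\theta)$ coordinates and performing the trivial $\theta$-integration reduces the problem to bounding $\sum_{n=1}^{q-1}\tfrac{2\pi}{q}\int_{\rho_0}^\infty K_{\mathbb H}(z,d_n(\rho))\sinh\rho\,d\rho$.

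For each fixed $n$ I will carry out the chain of manipulations from the proof of Theorem \ref{ellipticheattrace} verbatim with two modifications: the inner $x := \cosh\rho$ integral now runs from $x_0 := 1 + \delta q/(2\pi)$ instead of from $1$, and correspondingly the outer $u$-variable runs from $u_{0,n}$ determined by $\cosh u_{0,n} = 1 + 2\sin^2(n\pi/q)(x_0^2 - 1)$, equivalently $\sinh(u_{0,n}/2) = \sin(n\pi/q)\sinh\rho_0$. The $\cos^{-1}$ evaluation of the inner integral goes through unchanged, and when I integrate by parts in $u$ the boundary term again vanishes because the $\cos^{-1}$ argument equals $1$ at $u = u_{0,n}$. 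After the substitution $v^2 = \cosh u - \cosh u_{0,n}$, which uses the identity $\cosh u_{0,n} - 1 + 2\sin^2(n\pi/q) = 2\sin^2(n\pi/q)\,x_0^2$, the $n$-th summand takes the closed form
\[
\frac{x_0 e^{-z/4}}{q\sqrt{2\pi z}}\int_0^\infty \frac{e^{-u(v)^2/(4z)}}{v^2 + 2\sin^2(n\pi/q)\,x_0^2}\,dv,
\qquad u(v) := \cosh^{-1}(v^2 + \cosh u_{0,n}).
\]

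The delicate step is the final estimation. Using $|e^{-u^2/(4z)}| = e^{-u^2\eta}$ together with the elementary inequality $e^{-u^2\eta}\le e^{\lambda^2/(4\eta)}(\cosh u)^{-\lambda}$ (which follows from $\cosh u \le e^u$ and a one-variable quadratic optimization) applied with $\lambda = \eta\gamma$ converts the Gaussian-in-$u$ decay into the power weight $(v^2+\cosh u_{0,n})^{-\eta\gamma}$. The remaining $v$-integrals then reduce to beta integrals $\int_0^\infty(v^2+a^2)^{-\alpha-1}dv = \tfrac{\sqrt{\pi}}{2}\,\Gamma(\alpha+1/2)/[\Gamma(\alpha+1)\,a^{2\alpha+1}]$. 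I then split the sum on the threshold $\sin^2(n\pi/q)\le 1/2$: in the small-$s_n$ regime, combining $(v^2+\cosh u_{0,n})^{-\eta\gamma}\le (v^2+2\sin^2(n\pi/q)x_0^2)^{-\eta\gamma}$ with $\sin(n\pi/q)\ge 2n/q$ for $n\le q/2$, the symmetry $\sin(n\pi/q) = \sin((q-n)\pi/q)$, and the uniform estimate $q/(2x_0)\le \pi/\delta$, produces the $\zeta(1+2\eta\gamma)$ contribution weighted by $(\delta/(2\pi))^{-2\eta\gamma}$; in the large-$s_n$ regime, the crude bound $e^{-u(v)^2\eta}\le 1$ combined with $\int_0^\infty dv/(v^2+a^2) = \pi/(2a)$ absorbs into the additive $\pi$ term. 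The main obstacle will be bookkeeping: tracking the gamma-function ratios and numerical constants so the two-regime split and the elementary inequality $e^{\lambda^2/(4\eta)} = e^{\eta\gamma^2/4}$ do not inflate the prefactor beyond $1/\sqrt{\pi|z|}$, but no fundamentally new analytic ingredient beyond these elementary tools should be required.
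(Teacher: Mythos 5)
Your reduction to the single integral representation --- periodization over the cyclic group, the $\theta$-integration, the substitution $x=\cosh\rho$ with the lower limit moved out to $x_0=1+\delta q/(2\pi)$, the interchange of the order of integration, the $\cos^{-1}$ evaluation of the inner integral, and the integration by parts with vanishing boundary term --- is exactly the paper's route to its formula (\ref{Iqdelta}), and your split of the sum at $\sin^2(n\pi/q)=1/2$ is also the paper's. The divergence, and the gap, is in the final estimation. With the fixed choice $\lambda=\eta\gamma$, the inequality $e^{-\eta u^2}\le e^{\eta\gamma^2/4}(\cosh u)^{-\eta\gamma}$ leaves the factor $e^{\eta\gamma^2/4}$ in front of everything, and this factor is not absorbed by the stated bound. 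Tracking your own constants in the small-sine regime, the most you can extract from $(2\sin^2(n\pi/q)x_0^2)^{-\eta\gamma}$ via $\sin(n\pi/q)\ge 2n/q$ and $x_0\ge \delta q/(2\pi)$ is $8^{-\eta\gamma}\,n^{-2\eta\gamma}(\delta/(2\pi))^{-2\eta\gamma}$, so relative to the claimed bound you retain a factor $e^{\eta\gamma(\gamma/4-\log 8)}$. For $\gamma>4\log 8$ (i.e.\ $\delta$ larger than roughly $400$) this grows like $e^{c\eta}$ with $c>0$; taking $s=0$ and $t\searrow 0$, where $\eta=1/(4t)\to\infty$, your estimate exceeds the asserted one by a factor $e^{c/t}$. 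Since the proposition is claimed for all $\delta>0$, and the downstream uses (Remark \ref{uniformboundrmk}, Corollary \ref{smalltasymptotics}) exploit precisely the regime $s=0$, $t\searrow 0$ with $\delta$ chosen larger than $2\pi$, the ``bookkeeping obstacle'' you flag is a genuine unresolved gap, not a matter of constants.

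The repair is to not fix $\lambda$ at all. The paper simply bounds
$\bigl|\int_{d(n,q,\delta)}^{\infty}e^{-u^2/4z}f'(u,q,n)\,du\bigr|\le\sup_{u\ge d(n,q,\delta)}e^{-\eta u^2}\cdot\int_{d(n,q,\delta)}^{\infty}f'(u,q,n)\,du=\tfrac{\pi}{2}\,e^{-\eta\, d(n,q,\delta)^2}$,
using $f'>0$ and the fact that the $\cos^{-1}$ increases from $0$ to $\pi/2$ over the range of integration, and then applies $\cosh^{-1}x\ge\log x$ to get $e^{-\eta d^2}\le A^{-\eta\log A}$ with $A=1+q\delta\sin^2(n\pi/q)(4\pi+q\delta)/(2\pi^2)$. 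This is exactly the optimized version of your Chernoff-type inequality (the per-$n$ choice $\lambda=2\eta\log A$), and it produces the power weight with no exponential prefactor; it also renders your substitution $v^2=\cosh u-\cosh u_{0,n}$ and the beta integrals unnecessary. With that single replacement, the remainder of your two-regime argument goes through and yields the $\zeta_{\mathbb{Q}}(1+2\eta\gamma)$ and additive $\pi$ terms as stated.
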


\begin{proof}
This first part of the proof (up to the integral representation
given in the equation (\ref{Iqdelta})) follows the very same ideas
as in the computation of the elliptic heat trace presented in
Theorem \ref{ellipticheattrace}. However, since this is a rather
technical proposition, we will present it in detail. 

\hskip 0.2in The domain of
integration can be modeled by the region in $\mathbb{H}$ described
by
\begin{equation*}
\{(\rho, \theta): r(\delta,q)\le \rho < \infty, 0\le\theta<2\pi \}
\textrm{\quad where } r(\delta,q)=\cosh^{-1}(1+q\delta/2\pi).
\end{equation*}
In these coordinates, write any $x \in \mathbb{H}$ as $x =
(\rho,\theta)$ and, referring to (\ref{beardon}), let
\begin{equation*}
a(n,q,\rho)=d(x,\gamma_q^n x) = \cosh^{-1}(1+ 2 \sin^2(\pi n
/q)\sinh^2 (\rho)).
\end{equation*}
In these coordinates, the integrand under consideration depends
solely on $\rho$. Therefore, we are studying the explicit expression
\begin{align*}
I_{q,\delta}(z)&=\int_{C_q\backslash C_{q, \delta}} (K_{C_q}-K_{\mathbb{H}})(z,x,x)d\mu(x)\\
&=\int_{r(\delta,q)}^{\infty} \int_{0}^{2\pi} \sum_{n=1}^{q-1} K_{\mathbb{H}}(z,d(x,\gamma_q^n x))q^{-1} \sinh(\rho)d\theta d\rho\\
&=\frac{2\pi}{q} \int_{r(\delta,q)}^{\infty} \sum_{n=1}^{q-1} K_{\mathbb{H}}(z,a(n,q,\rho))) \sinh(\rho) d\rho\\
&=\frac{ e^{-z/4}}{q \sqrt{\pi (2z)^3}} \sum_{n=1}^{q-1}
\int_{r(\delta,q)}^{\infty} \int_{a(n,q,\rho)}^{\infty} \frac{u
e^{-u^2/4z} \sinh(\rho) du d\rho}{\sqrt{\cosh u -1- 2\sin^2(\pi n
/q)\sinh^2 (\rho)}}.
\end{align*}
Now let
\begin{equation*}
x = \cosh \rho \textrm{\quad and \quad} b(n,q,x) =
\cosh^{-1}(1+2\sin^2(n\pi/q)(x^2-1))
\end{equation*}
so then
\begin{equation*}
I_{q,\delta}(z) = \frac{ e^{-z/4}}{q \sqrt{\pi (2z)^3}}
\sum_{n=1}^{q-1} \int_{1+q\delta/2\pi}^{\infty}
\int_{b(n,q,x)}^{\infty} \frac{u e^{-u^2/4z} du dx} {\sqrt{\cosh u
-1 + 2\sin^2(\pi n /q)(1-x^2)}}.
\end{equation*}
To continue, let us interchange the order of integration in this
iterated integral. To do so, we set
\begin{equation*}
c(n,q,u) = \sqrt{1+ \frac{\cosh u -1}{2\sin^2(n\pi/q)}}
\textrm{\quad and \quad} d(n,q,\delta)=b(n,q,1+q\delta/2\pi)
\end{equation*}
so then we get
\begin{align*}
I_{q,\delta}(z) =& \frac{ e^{-z/4}}{q \sqrt{\pi (2z)^3}}
\sum_{n=1}^{q-1} \int_{d(n,q,\delta)}^{\infty}
\int_{1+q\delta/2\pi}^{c(n,q,u)} \frac{u e^{-u^2/4z} dx du}
{\sqrt{\cosh u -1 + 2\sin^2(\pi n /q)(1-x^2)}}\\
=&\frac{ e^{-z/4} }{4q \sqrt{\pi z^3}} \sum_{n=1}^{q-1}
\frac{1}{\sin(n\pi/q)}
\int_{d(n,q,\delta)}^{\infty} u e^{-u^2/4z}
\cos^{-1}\left(\frac{\sqrt{2}\sin(n\pi/q)(1+q\delta/2\pi)}{\sqrt{\cosh
u -1 + 2\sin^2(\pi n /q)}}\right)du.
\end{align*}
In the above computations, we used the elementary formula
\begin{equation*}
\int_a^b \frac{dx}{\sqrt{c-dx^2}} = -\frac{1}{\sqrt{d}} \Big(
\cos^{-1}(b\sqrt{d/c})-\cos^{-1}(a\sqrt{d/c})\Big)
\end{equation*}
in order to compute the integral with respect to $x$. Continuing,
let us define
\begin{equation*}
f(u,q,n)=\cos^{-1}\Bigg(\frac{\sqrt{2}\sin(n\pi/q)(1+q\delta/2\pi)}{\sqrt{\cosh
u -1 + 2\sin^2(n\pi /q)}}\Bigg)
\end{equation*}
and let $f'$ denote the derivative with respect to $u$. Therefore
after integrating by parts, we are left with the expression
\begin{equation}\label{Iqdelta}
I_{q,\delta}(z) = \frac{ e^{-z/4} }{2q\sqrt{\pi z}} \sum_{n=1}^{q-1} \frac{1}{\sin(n\pi/q)}\int_{d(n,q,\delta)}^{\infty} e^{-u^2/4z} f'(u,q,n)du.
\end{equation}
\hskip .20in  To finish, we shall now estimate the integral in
(\ref{Iqdelta}). First, observe that
\begin{equation*}
f'(u,q,n) = \frac{(1+q\delta/2\pi)}{\sqrt{2}}\cdot
\frac{\sin(n\pi/q)\sinh u \left(\cosh u - 1 + 2\sin^2(n\pi/q)\right)^{-1}}{\sqrt{\cosh u -1 +2\sin^2(n\pi/q) - 2\sin^2(n\pi/q) (1+q\delta/2\pi)^2 }}.
\end{equation*}
Clearly, it follows that $f'> 0$. Observing that
\begin{equation*}
\int_{d(n,q,\delta)}^{\infty} f'(u,q,n)d u= \frac{\pi}{2},
\end{equation*}
we proceed in bounding the integral $I_{q,\delta}(z)$ as follows:
\begin{align}\label{Iqdeltaprebound}
\notag |I_{q,\delta}(z)| &= \Bigg|\frac{ e^{-z/4} }{2q\sqrt{\pi z}} \sum_{n=1}^{q-1} \frac{1}{\sin(n\pi/q)}\int_{d(n,q,\delta)}^{\infty} e^{-u^2/4z} f'(u,q,n)du\Bigg|\\
\notag &\le \frac{ e^{-t/4} }{2q\sqrt{\pi |z|}} \sum_{n=1}^{q-1} \frac{1}{\sin(n\pi/q)}\int_{d(n,q,\delta)}^{\infty} e^{-\eta u^2} f'(u,q,n)du\\
\notag &\le \frac{ e^{-t/4} }{2q\sqrt{\pi |z|}} \sum_{n=1}^{q-1} \frac{1}{\sin(n\pi/q)} \Bigg(\sup_{u \ge d(n,q,\delta)} e^{-\eta u^2} \Bigg)\int_{d(n,q,\delta)}^{\infty}  f'(u,q,n)du\\
&\le \frac{ \sqrt{\pi}e^{-t/4} }{4q \sqrt{|z|}} \sum_{n=1}^{q-1}
\frac{1}{\sin(n\pi/q)} e^{-\eta d(n,q,\delta)^2} .
\end{align}
Recalling the definition of $d(n,q,\delta)$ and using that $\log (x)
\le \log(x+\sqrt{x^2-1})=\cosh^{-1}(x)$ for $x\ge 1$, we note that
\begin{align*}
\exp(-\eta d(n,q,\delta)^2) &= \exp\Bigg(-\eta
\big(\cosh^{-1}\big)^2\Big(1+\frac{q\delta\sin^2(n\pi/q)(4\pi+q\delta)}{2\pi^2}\Big)\Bigg)\\
&\le \exp\Bigg(-\eta
\log^{2}\Big(1+\frac{q\delta\sin^2(n\pi/q)(4\pi+q\delta)}{2\pi^2}\Big)\Bigg).
\end{align*}
This allows us to further bound the last inequality of
(\ref{Iqdeltaprebound}) thus giving
\begin{align}\label{Iqdeltabound}
\notag |I_{q,\delta}(z)|&\le \frac{ \sqrt{\pi}e^{-t/4} }{4q \sqrt{|z|}} \sum_{n=1}^{q-1} \frac{1}{\sin(n\pi/q)} \exp\Bigg(-\eta \log^2\bigg(1+\frac{q\delta\sin^2(n\pi/q)(4\pi+q\delta)}{2\pi^2}\bigg)\Bigg) \\
&\le \frac{ \sqrt{\pi}e^{-t/4} }{4q \sqrt{|z|}} \sum_{n=1}^{q-1}
\frac{1}{\sin(n\pi/q)} \Bigg( 1 +
\frac{q\delta\sin^2(n\pi/q)(4\pi+q\delta)}{2\pi^2}\Bigg)^{-\eta
h(n,q,\delta)}
\end{align}
where
\begin{equation*}
h(n,q,\delta) =
\log\Bigg(1+\frac{q\delta\sin^2(n\pi/q)(4\pi+q\delta)}{2\pi^2}\Bigg).
\end{equation*}
We continue by using the set decomposition
\begin{equation*}
\{1,2, \cdots ,q-1\} = \mathcal{I}_1 \cup \mathcal{I}_2
\end{equation*}
where
\begin{equation*}
\mathcal{I}_1 = \{n: 1 \le n \le q-1 \textrm{ and } \sin(n\pi/q) \le
\sqrt{2}/2 \}
\end{equation*}
so then
\begin{equation*}
\mathcal{I}_2 = \{n: 1 \le n \le q-1 \textrm{ and } \sin(n\pi/q) >
\sqrt{2}/2 \}.
\end{equation*}
We then write the sum in (\ref{Iqdeltabound}) as $F_1(z) + F_2(z)$,
where $F_1(z)$ is the sum over the integers in $\mathcal{I}_1$, and
$F_2(z)$ is the sum over the integers in $\mathcal{I}_2$. Let us
first consider the sum yielding $F_1(z)$. Note that one has $2[q/4]$
such terms indexed by $n = 1, 2, \cdots , [q/4]$ and $n = q - 1, q -
2, \cdots , q - [q/4]$. Since $\sin x = \sin(\pi  - x)$, we only sum
$n = 1, 2, \cdots , [q/4]$ and multiply by 2. It is elementary that
if $0 \le \sin x \le \sqrt{2}/2$ then $\sin x \ge x/2$. With all
this, the contribution to (\ref{Iqdeltabound}) of these terms is
bounded from above by
\begin{align}\label{F1bound}
\notag F_1(z) &= \frac{ \sqrt{\pi}e^{-t/4} }{4q \sqrt{|z|}} \sum_{n \in \mathcal{I}_1} \frac{1}{\sin(n\pi/q)} \Bigg( 1 + \frac{q\delta\sin^2(n\pi/q)(4\pi+q\delta)}{2\pi^2}\Bigg)^{-\eta h(n,q,\delta)}\\
\notag &\le \frac{ \sqrt{\pi}e^{-t/4} }{4q \sqrt{|z|}}  \cdot 2
\sum_{n=1}^{[q/4]} \frac{2q}{n\pi}\Bigg( 1 +
\frac{q\delta(n\pi/2q)^2(4\pi+q\delta)}{2\pi^2}\Bigg)^{-\eta
h(n,q,\delta)}\\
\notag &\le \frac{ e^{-t/4} }{\sqrt{\pi|z|}} \sum_{n=1}^{[q/4]} n^{-1} \left( n^2 \frac{\delta^2}{4\pi^2}\right)^{-\eta
\log(1 + \delta^2/(2\pi)^2)}\\
\notag &= \frac{ e^{-t/4} }{\sqrt{\pi|z|}} \left( \frac{\delta^2}{4\pi^2} \right)^{-\eta \gamma} \sum_{n=1}^{[q/4]} n^{-1-2\eta\gamma} \\
&\le \frac{ e^{-t/4} }{\sqrt{\pi|z|}} \left( \frac{\delta}{2\pi} \right)^{-2\eta \gamma} \zeta_{\mathbb{Q}}(1+2\eta \gamma).
\end{align}
The last expression occurs in the upper bound asserted in the
statement of the proposition.

\hskip .2in We now study the terms in (\ref{Iqdeltabound}) which
yields $F_2(z)$, namely those for which $\sin(n\pi/q) > \sqrt{2}/2$.
There are at most $q - 1 - 2[q/4]$ of these terms, which we bound
using the coarse but adequate estimate $q - 1 - 2[q/4] \le q$. With
this, we have the bounds
\begin{align}\label{F2bound}
\notag F_2(z) &= \frac{ \sqrt{\pi}e^{-t/4} }{4q \sqrt{|z|}} \sum_{n \in \mathcal{I}_2} \frac{1}{\sin(n\pi/q)} \Bigg( 1 + \frac{q\delta\sin^2(n\pi/q)(4\pi+q\delta)}{2\pi^2}\Bigg)^{-\eta h(n,q,\delta)}\\
\notag &\le \frac{ \sqrt{\pi}e^{-t/4} }{4q \sqrt{|z|}} \sum_{n=1}^{q} \frac{2}{\sqrt{2}} \Bigg( \frac{ (q\delta \sqrt{2}/2)^2}{2\pi^2}\Bigg)^{-\eta \log(1 + \delta^2/(2\pi)^2)}\\
\notag &\le \frac{ \sqrt{\pi}e^{-t/4} }{4q \sqrt{|z|}} \sum_{n=1}^{q} \frac{2}{\sqrt{2}} \left(\frac{\delta^2}{4\pi^2}\right)^{-\eta \gamma}\\
&\le \frac{ \pi e^{-t/4} }{ \sqrt{\pi|z|}} \left(\frac{\delta}{2\pi}\right)^{-2\eta \gamma}.
\end{align}
Combining (\ref{F1bound}) and (\ref{F2bound}) completes the proof of the proposition.
\end{proof}

\begin{rmk}
\emph{It is interesting to compare the bound obtained in Proposition
\ref{thmintegraloverdifferencecusps} (elliptic degeneration) with
the bound given in Theorem 3.1 of \cite{JoLu 97a} (hyperbolic
degeneration). When doing so, one sees a remarkable similarity
between the upper bounds, which one would expect in some
philosophical level. However, structurally, one can make a more
precise comparison between the two results. As in \cite{JoLu 97a},
our results rely on the coefficient of the special value of the
Riemann zeta function. When comparing Theorem 3.1 of \cite{JoLu 97a}
and Proposition \ref{thmintegraloverdifferencecusps} above, we need
to keep in mind that the domain of integration in the setting of
hyperbolic degeneration of \cite{JoLu 97a} consists of two
integrals, whereas in elliptic degeneration there is one component.
Furthermore, one could easily modify the expressions involving $\delta$ in Theorem 3.1 of
\cite{JoLu 97a} so that they match their counterparts here.
As we will see, the additive factor next to the zeta value present here (but not in \cite{JoLu 97a}),
will play a significant role in the behavior of the trace near $t = 0$.
} 
\end{rmk}

\begin{rmk}\label{Iqzero}
\emph{For any $q$, we have that
\begin{equation*}
I_{q,0}(z) = \frac{ e^{-z/4} }{q\sqrt{16\pi z}} \sum_{n=1}^{q-1}
\int_{0}^{\infty}\frac{e^{-u^2/4z} \cosh(u/2)}{\sinh^2 (u/2) +
\sin^2(n \pi  /q)}du.
\end{equation*}
The result follows by taking $\delta = 0$ in the integral
representation given by the equation (\ref{Iqdelta}). The integral
above is one of the terms (corresponding to one elliptic fixed
point) in the elliptic heat trace formula as detailed in Theorem
\ref{ellipticheattrace}}.
\end{rmk}

\begin{cor}\label{uniformboundscorollary}
Let $z=t+is$ with $t>0$. Then the following bounds and limits hold
uniformly in $q.$
\begin{enumerate}[(a)]
\item
There is a constant $C$ (depending on $\delta, t,$ and the number of degenerating elliptic elements) such that
\begin{equation*}
\Bigg|\int_{C_{q}\backslash C_{q, \delta}}
(K_{C_{q}}-K_{\mathbb{H}})(t+is,x,x)d\mu(x)\Bigg| \le C
(1+|s|)^{3/2}.
\end{equation*}
\item
The following limit holds
\begin{equation*}
\lim_{\delta \to \infty}\Bigg|\int_{C_{q}\backslash C_{q,
\delta}} (K_{C_{q}}-K_{\mathbb{H}})(t+is,x,x)d\mu(x)\Bigg|=0.
\end{equation*}
\item
For any $0<\varepsilon < 1/2$, there exists a constant $C$ (depending on $t$) such that
\begin{equation*}
\Bigg|\int_{C_{q, \varepsilon}}
(K_{M_{q}}-K_{C_{q}})(t+is,x,x)d\mu(x)\Bigg| \le C
(1+|s|)^{3/2}.
\end{equation*}
\item
For fixed $z=t+is$ there exists a constant $C$ such that
\begin{equation*}
\Bigg|\int_{C_{q, \varepsilon}}
(K_{M_{q}}-K_{C_{q}})(t+is,x,x)d\mu(x)\Bigg| \le C
\sqrt{\varepsilon}.
\end{equation*}
\end{enumerate}
\end{cor}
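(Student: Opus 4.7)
Parts (a) and (b) are direct applications of Proposition \ref{thmintegraloverdifferencecusps}. Setting $z=t+is$, we have $\eta = t/(4(t^2+s^2))$ and $\gamma = \log(1+(\delta/(2\pi))^2)$, so $\gamma$ is fixed once $\delta$ is chosen. For (a), I examine the behavior as $|s|\to\infty$: then $\eta\gamma\to 0$, the Riemann zeta factor diverges as $\zeta_{\mathbb Q}(1+2\eta\gamma)\sim (2\eta\gamma)^{-1} = 2(t^2+s^2)/(t\gamma)$, and $(\delta/(2\pi))^{-2\eta\gamma}\to 1$. Combined with the prefactor $1/\sqrt{|z|} = (t^2+s^2)^{-1/4}$, this gives a bound $\lesssim (t^2+s^2)^{3/4} \lesssim (1+|s|)^{3/2}$ uniformly in $q$ with $t$ in a compact set; the additive $\pi$ inside the bracket is absorbed. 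For (b), fix $z$ (so $\eta>0$ is fixed) and let $\delta\to\infty$: then $\gamma\to\infty$, $\zeta_{\mathbb Q}(1+2\eta\gamma)\to 1$, while $(\delta/(2\pi))^{-2\eta\gamma}$ decays super-polynomially, and the bound vanishes. Summing over the finitely many degenerating elements contributes only a multiplicative constant.

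For parts (c) and (d), I fix $\delta_0$ with $\varepsilon<\delta_0<1/2$ and invoke Duhamel's principle through $\partial C_{q,\delta_0}$. Since $K_{M_q}$ and $K_{C_q}$ both solve the heat equation inside $C_{q,\delta_0}$ with the same initial data, their difference vanishes at $\tau=0$, and the Poisson-kernel representation of Remark \ref{Poissonkernelremark} extends by analytic continuation to complex time:
\begin{equation*}
(K_{M_q}-K_{C_q})(z,x,y) = \int_0^z \int_{\partial C_{q,\delta_0}} P_{q,\delta_0}(z-\tau,x,\zeta)\, (K_{M_q}-K_{C_q})(\tau,\zeta,y)\, d\rho(\zeta)\, d\tau,
\end{equation*}
where the $\tau$-integral may be taken along the broken path $[0,t]\cup[t,t+is]$. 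Setting $y=x$, integrating over $x\in C_{q,\varepsilon}$, interchanging integrals, and applying Cauchy--Schwarz in $x$ produces
\begin{equation*}
\bigg|\int_{C_{q,\varepsilon}} (K_{M_q}-K_{C_q})(z,x,x)\, d\mu(x)\bigg| \le \int_0^z \int_{\partial C_{q,\delta_0}} \|P_{q,\delta_0}(z-\tau,\cdot,\zeta)\|_{2;C_{q,\varepsilon}}\, \|(K_{M_q}-K_{C_q})(\tau,\zeta,\cdot)\|_{2;C_{q,\varepsilon}}\, d\rho(\zeta)\, |d\tau|.
\end{equation*}

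For (c), I bound the Poisson factor by $\|P_{q,\delta_0}(z-\tau,\cdot,\zeta)\|_{2;C_{q,\delta_0}}$, which by Corollary \ref{constantinsnorm} is constant in $\mathrm{Im}\,\tau$ and by Proposition \ref{poissonkernelproperties}(c) is nonincreasing in $\mathrm{Re}(z-\tau)$, hence uniformly bounded in $q$ for $t$ in a compact set. The second factor is $\le C\sqrt{1+|\mathrm{Im}\,\tau|}\le C\sqrt{1+|s|}$ by Lemma \ref{supremumboundlemma}(b). The boundary length $\mathrm{len}(\partial C_{q,\delta_0})$ is bounded independently of $q$, and the broken contour has length $\le t+|s|$, so multiplication gives $(1+|s|)\cdot\sqrt{1+|s|}=(1+|s|)^{3/2}$. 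For (d), I bound instead $\|P_{q,\delta_0}(z-\tau,\cdot,\zeta)\|_{2;C_{q,\varepsilon}}\le \|P_{q,\delta_0}(z-\tau,\cdot,\zeta)\|_{\infty;C_{q,\varepsilon}}\cdot\sqrt{\mathrm{vol}(C_{q,\varepsilon})}$, where $\mathrm{vol}(C_{q,\varepsilon})=\varepsilon$ (Section \ref{Geometry of elliptic degeneration}) and a complex-time analog of Proposition \ref{poissonkernelproperties}(a)---obtained via the spectral Cauchy--Schwarz argument that underpins Proposition \ref{kerneluniformbounds}---supplies the sup bound. For the fixed $z$ all remaining factors are bounded, yielding the $\sqrt{\varepsilon}$ estimate.

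\textbf{Main obstacle.} The principal subtlety is the passage to complex time in Duhamel's identity. I must verify that the Poisson-kernel representation of Remark \ref{Poissonkernelremark} extends holomorphically to the right half plane and that the contour from $0$ to $z$ can be deformed to the broken path $[0,t]\cup[t,t+is]$ without crossing singularities. This is guaranteed by Lemma \ref{supremumboundlemma}(a) and Proposition \ref{poissonkernelproperties}(a), which ensure that the relevant integrands remain continuous up to the endpoints $\tau=0$ and $\tau=z$ (since $x\in C_{q,\varepsilon}$ and $\zeta\in\partial C_{q,\delta_0}$ are geometrically separated). The complex-time sup bound on $P_{q,\delta_0}$ used in (d) is then supplied, analogously to Proposition \ref{kerneluniformbounds}, by a spectral Cauchy--Schwarz estimate applied to the eigenfunction expansion of the Dirichlet heat kernel on $C_{q,\delta_0}$.
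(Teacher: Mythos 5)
Your parts (a) and (b) are correct and follow the paper's route exactly: both are read off from Proposition \ref{thmintegraloverdifferencecusps} using $\zeta_{\mathbb Q}(1+\epsilon)\sim\epsilon^{-1}$ as $\epsilon\to 0$ for (a) and the superpolynomial decay of $(\delta/2\pi)^{-2\eta\gamma}$ for (b).

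There is, however, a genuine gap in parts (c) and (d): your choice of contour $[0,t]\cup[t,t+is]$ in Duhamel's identity does not work. On the entire second segment you have $\tau=t+is'$, hence $z-\tau=i(s-s')$ is purely imaginary, and already at the junction $\tau=t$ you have $z-\tau=is$. None of the available estimates control the Poisson kernel there: Corollary \ref{constantinsnorm} and Proposition \ref{poissonkernelproperties}(c) together give $\parallel P_{q,\delta}(t'+is'',\cdot,\zeta)\parallel_{2;C_{q,\delta}}=\parallel P_{q,\delta}(t',\cdot,\zeta)\parallel_{2;C_{q,\delta}}$, and this quantity \emph{blows up} as $t'\to 0^{+}$ (the Poisson kernel concentrates at $\zeta$), so your claim that the factor $\parallel P_{q,\delta_0}(z-\tau,\cdot,\zeta)\parallel_{2;C_{q,\delta_0}}$ is uniformly bounded along the contour is false; at purely imaginary argument the kernel is not even usefully defined via its eigenfunction expansion. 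The paper avoids this by splitting the contour into \emph{three} segments, $[0,t/2]\cup[t/2,t/2+is]\cup[t/2+is,t+is]$: on the first two, $\mathrm{Re}(z-\tau)\ge t/2$ is bounded away from zero, so the $L^2$ monotonicity/constancy bounds on $P_{q,\delta}$ apply, while on the third segment $z-\tau$ is real in $[0,t/2]$ and one switches to the pointwise bound of Proposition \ref{poissonkernelproperties}(a), which holds down to time $0$ precisely because $x\in C_{q,\varepsilon}$ is geometrically separated from $\zeta\in\partial C_{q,\delta}$; the complementary factor $(K_{M_{q}}-K_{C_{q}})(\tau,\zeta,\cdot)$ is then controlled on each segment by Lemma \ref{supremumboundlemma}(a) or (b) as appropriate. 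For part (d) your idea of loading the $\sqrt{\varepsilon}$ onto the Poisson factor via $\parallel P\parallel_{2;C_{q,\varepsilon}}\le\parallel P\parallel_{\infty}\sqrt{\varepsilon}$ additionally requires a complex-time sup bound for the Poisson kernel that is nowhere established (Proposition \ref{poissonkernelproperties}(a) is a real-time statement); the paper instead keeps the three-segment decomposition, observes that segments A and C already carry $\sqrt{\varepsilon}$, and upgrades segment B by converting the complex-time heat-kernel difference to real time (as in Remark \ref{complexrealtime}) so that the sup bound of Lemma \ref{supremumboundlemma}(a) yields the $L^2$ bound $M\sqrt{\mathrm{vol}(C_{q,\varepsilon})}=M\sqrt{\varepsilon}$ on the heat-kernel factor.
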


\begin{proof}
For part (a), the quantity above is a sum of integrals over
$C_{q_k}\backslash C_{q_k, \delta}$ for each $q_k\in
q$. Recall that $\zeta_{\mathbb{Q}}(1+\varepsilon)\sim
\varepsilon^{-1}$ for $\varepsilon$ near 0, so then the result
follows immediately from Proposition
\ref{thmintegraloverdifferencecusps}. Part (b) follows directly by
inspection from Proposition \ref{thmintegraloverdifferencecusps}.

\hskip 0.2in Part (c) estimates the integral (II) in Theorem \ref{pointwiseanduniformconvergence}. For simplicity, we can work with one
degenerating cone whose order is $q$. Choose $\delta$ such that
$\varepsilon <\delta <1/2$. For any $x,y \in
C_{q,\varepsilon}$ with $y$ fixed, we can use the Poisson
kernel to write
\begin{align*}
(K_{M_q} - K_{C_{q}})(t,x,y) = \int_{0}^{t}\int_{\partial
C_{q,\delta}}
P_{q,\delta}(t-\sigma,x,\zeta)(K_{M_q} -
K_{C_{q}})(\sigma,\zeta,y)d\rho(\zeta)d\sigma
\end{align*}
which naturally extends to complex values of $t$. Set $y=x$ and
integrate to get
\begin{align*}
\int_{C_{q,\varepsilon}}&(K_{M_q} -
K_{C_{q}})(t+is,x,x)d\mu(x) \\
&= \int_{C_{q,\varepsilon}} \int_{0}^{t+is}\int_{\partial
C_{q,\delta}}
P_{q,\delta}(t+is-\sigma,x,\zeta)(K_{M_q} -
K_{C_{q}})(\sigma,\zeta,x)d\rho(\zeta)d\sigma d\mu(x)\\
&= \int_{\partial C_{q,\delta}}
\int_{0}^{t+is}\int_{C_{q,\varepsilon}}
P_{q,\delta}(t+is-\sigma,x,\zeta)(K_{M_q} -
K_{C_{q}})(\sigma,\zeta,x)d\mu(x)d\sigma d\rho(\zeta)
\end{align*}
where the interchange of the order of integration is justified by
the fact that the integrand is continuous. Break up the above
integral as
\begin{align*}
\int_{\partial C_{q,\delta}}
\int_{0}^{t+is}\int_{C_{q,\varepsilon}} =& \int_{\partial
C_{q,\delta}}
\int_{0}^{t/2}\int_{C_{q,\varepsilon}}\\
&+\int_{\partial C_{q,\delta}}
\int_{t/2}^{t/2+is}\int_{C_{q,\varepsilon}}+\int_{\partial
C_{q,\delta}}
\int_{t/2+is}^{t+is}\int_{C_{q,\varepsilon}}.
\end{align*}
Label the above three integrals A,B, and C. The path of
integration in the variable $\sigma$ consists of three linear segments.

\hskip 0.2in \emph{Estimating integral A.} For $0 \le \sigma \le
t/2$, the supremum bound from Lemma \ref{supremumboundlemma} part (a) gives
the bound
\begin{align*}
\parallel (K_{M_{q}}-K_{C_{q}})(\sigma,\zeta,\cdot)
\parallel_{2;C_{q,\varepsilon}} \le M
\sqrt{\textrm{vol}(C_{q,\varepsilon})} = M \sqrt{\varepsilon},
\end{align*}
where $M$ is independent of $q$. For $0\le \sigma \le t/2$,
inclusions of domains gives the inequality
\begin{align*}
\parallel P_{q,\delta}(t+is-\sigma,\cdot,\zeta)
\parallel_{2;C_{q,\varepsilon}} \le \parallel P_{q,\delta}(t+is-\sigma,\cdot,\zeta)
\parallel_{2;C_{q,\delta}}.
\end{align*}
The $L^2$-norm above is constant in $s$, by Corollary
\ref{constantinsnorm}, and decreasing in $t$, by Proposition
\ref{poissonkernelproperties} part (c). Then we can write
\begin{align*}
\parallel P_{q,\delta}(t+is-\sigma,\cdot,\zeta)
\parallel_{2;C_{q,\delta}} &\le \parallel P_{q,\delta}(t-\sigma,\cdot,\zeta)
\parallel_{2;C_{q,\delta}}\\
&\le \parallel P_{q,\delta}(t/2,\cdot,\zeta)
\parallel_{2;C_{q,\delta}}\\
&\le \sup_{x\in C_{q,\delta}}
P_{q,\delta}(t/2,x,\zeta)
\sqrt{\textrm{vol}(C_{q,\delta})}\\
&\le M \sqrt{\delta},
\end{align*}
where the last inequality follows from Proposition
\ref{poissonkernelproperties} part (a). Recall that the length of the
boundary of $C_{q,\delta}$ is $\sqrt{4\pi \delta/q +
\delta^2}$. Using these facts and the Cauchy-Schwarz
inequality, we get the bound for integral A from above by
\begin{align}\label{boundintegralA}
\notag\int_{\partial C_{q,\delta}} \int_{0}^{t/2} \parallel
P_{q,\delta}(t+is-\sigma,\cdot,\zeta) &
\parallel_{2;C_{q,\varepsilon}}  \parallel (K_{M_{q}}-K_{C_{q}})(\sigma,\zeta,\cdot)
\parallel_{2;C_{q,\varepsilon}} d\sigma d\rho(\zeta)\\
\notag &\le M^2 \sqrt{\varepsilon\delta}\int_{\partial
C_{q,\delta}} \int_{0}^{t/2}d\sigma d\rho(\zeta)\\
\notag&\le M^2 \sqrt{\varepsilon\delta}  \sqrt{4\pi \delta/q +
\delta^2} \cdot t\\
&\le M^2\delta \sqrt{\varepsilon(4\pi  +
\delta)} \cdot t
\end{align}
since $q$ is an positive integer grater than 2.

\hskip 0.2in \emph{Estimating integral B.} For $\sigma$ on the line
segment from $t/2$ to $t/2+is$, we can use Lemma
\ref{supremumboundlemma} part (b) to obtain the inequality
\begin{align*}
\parallel (K_{M_{q}}-K_{C_{q}})(\sigma,\zeta,\cdot)
\parallel_{2;C_{q,\varepsilon}} \le
M \sqrt{1+|\textrm{Im}(\sigma)|}.
\end{align*}
As before, using the Cauchy-Schwarz inequality and Proposition
\ref{poissonkernelproperties} part (c), we can bound integral B from
above by
\begin{align}\label{boundintegralB}
\notag\Bigg| \int_{\partial C_{q,\delta}} \int_{t/2}^{t/2+is}
\parallel P_{q,\delta}(t+is-\sigma,\cdot,\zeta) &
\parallel_{2;C_{q,\varepsilon}}  \parallel (K_{M_{q}}-K_{C_{q}})(\sigma,\zeta,\cdot)
\parallel_{2;C_{q,\varepsilon}} d\sigma d\rho(\zeta)\Bigg|\\
\notag &\le M^2 \sqrt{\delta} \sqrt{4\pi \delta/q + \delta^2} \cdot
\frac{2}{3}(1+|s|)^{3/2}\\
&\le M^2\delta \sqrt{4\pi  + \delta} \cdot
(1+|s|)^{3/2}.
\end{align}
\hskip 0.2in \emph{Estimating integral C.} Using similar arguments
as before, we can bound integral C from above by
\begin{align}\label{boundintegralC}
\notag \Bigg| \int_{\partial C_{q,\delta}}
\int_{t/2+is}^{t+is} & \sup_{x\in C_{q,\delta}}
P_{q,\delta}(t+is-\sigma,\cdot,\zeta)  \cdot
\sqrt{\textrm{vol}(C_{q,\delta})} M \sqrt{1+|s|}d\sigma d\rho(\zeta)\Bigg|\\
\notag &\le M t \sqrt{4\pi \delta/q + \delta^2)}
\sqrt{\delta}  \sqrt{1+|s|} \sup_{\substack{ x\in
C_{q,\varepsilon} \\ \tau \in (0,t/2)}}
P_{q,\delta}(\tau,x,\zeta)\\
&\le M^2 \delta \sqrt{4\pi + \delta} \cdot t \cdot \sqrt{1+|s|}.
\end{align}
By combining (\ref{boundintegralA}),(\ref{boundintegralB}), and
(\ref{boundintegralC}) we complete the proof of part (c) of this
corollary.

\hskip 0.2in It remains to prove part (d). In this direction, note
that integrals A and C from part (c) lines (\ref{boundintegralA}) and
(\ref{boundintegralC}) depend explicitly on $\sqrt{\varepsilon}$.
Since $z=t+is$ is fixed, we can improve the bound of integral B,
namely we can show that integral B depends on $\sqrt{\varepsilon}$.
Let $\sigma = a+ib$ with $a>0$. Then we can write
\begin{align*}
|(K_{M_{q}}-K_{C_{q}})(\sigma,x,y)| \le
\exp(b^2/4a)a^{3/2}(a^2+b^2)^{3/4}(K_{M_{q}}-K_{C_{q}})(\tau,x,y)
\end{align*}
where $\tau = (a^2+b^2)/a$. For $|b|\le |s|$, the right hand side is
bounded independent of $q$ according to Lemma
\ref{supremumboundlemma} part (a). The supremum bound leads to the
$L^2$-bound
\begin{align*}
\parallel (K_{M_{q}}-K_{C_{q}})(\sigma,\zeta,\cdot)
\parallel_{2;C_{q,\varepsilon}} \le M
\sqrt{\textrm{vol}(C_{q_k,\varepsilon})} = M \sqrt{\varepsilon},
\end{align*}
which holds for the relevant range of $b$. With this bound, we carry
on as in (\ref{boundintegralB}) and obtain that integral B is also
dependent on $\sqrt{\varepsilon}$. This completes the proof.
\end{proof}

\begin{lemma}\label{constantboundlemma}
For fixed $t>0$, the integral
\begin{align*}
\int_{M_{q}\backslash C_{q,\varepsilon}}|(K_{M_{q}} -
K_{\mathbb{H}})(t+is,x,x)|d\mu(x)
\end{align*}
is bounded as a function of $s$, independently of $q.$
\end{lemma}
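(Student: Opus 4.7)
The plan is to combine a complex-time/real-time spectral majorization with the real-time bounds already established in Sections 3 and 4, thereby eliminating all $s$-dependence at the outset.

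First, consider the spectral expansions of both heat kernels. On $M_q$, equations (\ref{heatkernelexpansioncompact})--(\ref{heatkernelexpansionnoncompact}) give an expansion with non-negative spectral weights $|\phi_n(x)|^2$ and $|E_{\mathrm{par};M_q,P}(1/2+ir,x)|^2$. Similarly, (\ref{heatkernelonH0}) expresses $K_{\mathbb{H}}(t,0)$ as an integral against the non-negative density $\tanh(\pi r)r$. Since $|e^{-i\lambda s}|=1$ for each spectral parameter, taking absolute values inside each spectral integral yields the pointwise estimates
\begin{align*}
|K_{M_q}(t+is,x,x)| \le K_{M_q}(t,x,x),\qquad |K_{\mathbb{H}}(t+is,x,x)| \le K_{\mathbb{H}}(t,0).
\end{align*}
By the triangle inequality this gives the $s$-independent majorization
\begin{align*}
|(K_{M_q}-K_{\mathbb{H}})(t+is,x,x)| \le K_{M_q}(t,x,x) + K_{\mathbb{H}}(t,0).
\end{align*}

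Second, it suffices to bound the integral of the right-hand side over $M_q\setminus C_{q,\varepsilon}$ uniformly in $q$. The term $K_{\mathbb{H}}(t,0)\cdot\operatorname{vol}(M_q\setminus C_{q,\varepsilon})$ is uniformly bounded because Gauss-Bonnet yields $\operatorname{vol}(M_q)=2\pi\bigl(2g-2+p+\sum_k(1-1/q_k)\bigr)\to \operatorname{vol}(M_\infty)$ as $q\to\infty$. For the remaining piece I would apply the periodization decomposition of Theorem \ref{integralrepresentation}, writing
\begin{align*}
K_{M_q}(t,x,x) - K_{\mathbb{H}}(t,0) = HK_{M_q}(t,x) + PK_{M_q}(t,x) + EK_{M_q}(t,x).
\end{align*}
Integrating the hyperbolic piece over $M_q$ produces $\mathrm{HTr}\,K_{M_q}(t)$, which by Theorem \ref{hyperbolicheattrace} depends only on the primitive closed geodesic lengths of $M_q$; these are essentially unaltered by elliptic degeneration and the sum converges uniformly in $q$. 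Integrating the elliptic piece produces $\mathrm{ETr}\,K_{M_q}(t)$, which is controlled by Theorem \ref{ellipticheattrace} and Proposition \ref{thmintegraloverdifferencecusps} (applied with $\delta=0$, cf.\ Remark \ref{Iqzero}) uniformly in $q$.

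The main obstacle is the parabolic term $\int PK_{M_q}(t,x)\,d\mu(x)$, since $PK_{M_q}(t,x)$ grows linearly in the height coordinate near a genuine cusp and is not integrable over a full cusp neighborhood of $M_q$. To handle this, one interprets $M_q\setminus C_{q,\varepsilon}$ as also excluding a fixed (non-degenerating) truncation of each original parabolic cusp of $M$. On the resulting domain the parabolic periodization depends only on the fixed parabolic subgroup of $\Gamma$, independently of the degeneration parameter, and is therefore uniformly bounded in $q$. Assembling the three integrals produces a finite constant $C=C(t,\varepsilon,M)$ independent of both $s$ and $q$, which is the claim.
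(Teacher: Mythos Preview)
Your first step---the spectral majorization $|K_{M_q}(t+is,x,x)|\le K_{M_q}(t,x,x)$ and $|K_{\mathbb H}(t+is,0)|\le K_{\mathbb H}(t,0)$---is exactly how the paper begins, via Proposition~\ref{kerneluniformbounds}(a). The divergence is in the second step.

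There is a genuine gap in your treatment of the elliptic contribution. You assert that $\mathrm{ETr}K_{M_q}(t)$ is controlled uniformly in $q$ by Proposition~\ref{thmintegraloverdifferencecusps} with $\delta=0$, but that proposition gives the bound $(\delta/2\pi)^{-2\eta\gamma}\,[\zeta_{\mathbb Q}(1+2\eta\gamma)+\pi]$ with $\gamma=\log(1+(\delta/2\pi)^2)$; as $\delta\to 0$ one has $\gamma\to 0$ and $\zeta_{\mathbb Q}(1+2\eta\gamma)\to\infty$, so the estimate is useless at $\delta=0$. In fact $\mathrm{ETr}K_{M_q}(t)$ is \emph{not} uniformly bounded in $q$: the degenerating part $\mathrm{DTr}K_{M_q}(t)$ diverges as $q\to\infty$, since it is approaching the parabolic contribution on $M_\infty$, which is infinite (this is the whole reason one regularizes). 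Restricting the integration to $M_q\setminus C_{q,\varepsilon}$ does not save you immediately either, because $\int_{M_q\setminus C_{q,\varepsilon}}EK_{M_q}(t,x)\,d\mu$ does not unfold cleanly to an integral over a cone; disentangling it is precisely the content of Lemma~\ref{threeintegralslemma}. Your hyperbolic argument is also incomplete: the length spectrum of $M_q$ genuinely moves with $q$, and uniform control of $\mathrm{HTr}K_{M_q}(t)$ requires at least a uniform lower bound on the systole together with a uniform counting argument, neither of which you cite.

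The paper avoids the conjugacy-class decomposition entirely. After the same first step, it bounds $K_{M_q}(t,x,x)$ \emph{pointwise} on $M_q\setminus C_{q,\varepsilon}$ by writing it as a Stieltjes integral $\int_0^\infty K_{\mathbb H}(t,\rho)\,dN_{\Gamma_q}(x;\rho)$ against the orbital counting function and invoking a uniform lattice-point estimate (Lemma~4 of \cite{JoLu 95}) to get $K_{M_q}(t,x,x)\le C\,e^{-t/4}(4\pi t)^{-3/2}$ with $C$ independent of $q$ and of $x$ in the truncated surface. Then one multiplies by $\mathrm{vol}(M_q)\le 2\pi(2g-2+\kappa)$. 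This sidesteps the divergence of the individual traces altogether.
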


\begin{proof}
From Proposition \ref{kerneluniformbounds} part (a), the above
integral can be bounded from above by
\begin{align}\label{constantboundlemma1}
\int_{M_{q}\backslash C_{q,\varepsilon}}
K_{M_{q}}(t,x,x)d\mu(x)+ \textrm{vol}(M_{q}\backslash
C_{q,\varepsilon})K_{\mathbb{H}}(z,0).
\end{align}

Using the periodization of the heat kernel, we can rewrite the integrand in (\ref{constantboundlemma1}) as a Stieltjes integral
\begin{align*}
K_{M_{q}}(t,x,x) = \sum_{\gamma \in \Gamma_q} K_{\mathbb{H}}(t, d_{\mathbb{H}}(\tilde{x},\gamma\tilde{x})
= \int_{0}^{\infty} K_{\mathbb{H}}(t,\rho) \ dN_{\Gamma_q}(x;\rho)
\end{align*}
where $N_{\Gamma_q}(x;\rho)$ counts the number of geodesics about $x$ whose length is bounded above by $\rho$.
Directly from (\ref{heatkernelonH1}), we obtain the following bound  
\begin{align*} 
K_{M_{q}}(t,x,x) \le 
\frac{100e^{-t/4}}{(4\pi t)^{3/2}}\int_{0}^{\infty} e^{-\rho^2/4t} \ dN_{\Gamma_q}(x;\rho).
\end{align*}
Using Lemma 4 of \cite{JoLu 95} which we apply to the function $f(\rho) = e^{-\rho^2/4t}$, we arrive at the following bound for the heat kernel
\begin{align}\label{constantboundlemma2}
K_{M_{q}}(t,x,x) \le 
\frac{Ce^{-t/4}}{(4\pi t)^{3/2}}.
\end{align}

As far as the volume term, denote by $g$ the genus of the family (which is independent of the degeneration parameter $q$) and by $\kappa$ the number of \emph{all} cusps and cones of the family (which also stays constant). Then we can write
\begin{align}\label{constantboundlemma3}
\textrm{vol}(M_{q}) \le 2\pi(2g-2+\kappa). 
\end{align}

Additionally, from  (\ref{heatkernelonH0}) which we extend here to complex time, we obtain the following bound 
\begin{align}\label{constantboundlemma4}
|K_{\mathbb{H}}(z,0)| \le \frac{e^{-t/4}}{4\pi t}.
\end{align}

Finally, the combination of (\ref{constantboundlemma1}) thorough (\ref{constantboundlemma4}), yields the bound
\begin{align}\label{constantboundlemma5}
\int_{M_{q}\backslash C_{q,\varepsilon}}|(K_{M_{q}} - K_{\mathbb{H}})(t+is,x,x)|d\mu(x) 
\le  \frac{2\pi(2g-2+\kappa)e^{-t/4}}{4\pi t} \left( \frac{C}{\sqrt{4\pi t}} +  1 \right)
\end{align}
which completes the proof.
\end{proof}

\section{Convergence of regularized heat traces}
\label{Convergence of regularized heat traces}

\hskip 0.2in In this section we will make use of the estimates from
previous sections to prove the convergence through elliptic degeneration of the regularized trace of the heat kernel on the $M_{q}$ to the regularized trace on the limiting surface $M_{\infty}$. The trace of the heat kernel alone diverges through degeneration since the degenerating elliptic elements converge to cusps in the limiting surface.
The result applies to elliptically degenerating families $M_{q}$ of finite volume, both in the compact and non-compact case. The convergence has a dual aspect, on the one hand pointwise and on the other hand uniform with respect to the time variable.

\hskip 0.2in We start by defining the degenerating trace of the heat kernel.
Let ${M_{q}}$ be a degenerating sequence of
connected, hyperbolic Riemann surface of finite volume having
limiting surface $M_{\infty}$. This means that each surface in the
family is realized as $\Gamma_{q}\backslash\mathbb{H}$. 
Let $DE(\Gamma_q)$ denote the subset of $E(\Gamma_q)$ consisting of the inconjugate primitive elliptic elements associated to the conical points we wish to degenerate into cusps.
We define the degenerating heat trace via the
integral
\begin{align*}
\text{\rm DTr}K_{M_{q}}(z) &=
\int_{C_{q}}(K_{C_{q}}-K_{\mathbb{H}})(z,x,x)d\mu(x)\\
&=\sum_{\gamma \in DE(\Gamma_{q})}
\int_{C_{\gamma}}(K_{C_{\gamma}}-K_{\mathbb{H}})(z,x,x)d\mu(x).
\end{align*}
for all complex values $z=t+is$ with $t>0$.
\begin{prop}
In the above setting, for any $t>0$ we have the equality
\begin{equation*}
\text{\rm DTr}K_{M_{q}}(z) = \frac{e^{-z/4}}{\sqrt{16\pi
z}}\sum_{\gamma\in DE(\Gamma_{q})}
\sum_{n=1}^{q_{\gamma}-1} \frac{1}{q_{\gamma}} \int_{0}^{\infty}
\frac{e^{-u^2/4z}\cosh(u/2)}{\sinh^2(u/2)+\sin^2(n\pi/q_{\gamma})}du.
\end{equation*}
\end{prop}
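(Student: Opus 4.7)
\medskip

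\textbf{Proof proposal.} The plan is to reduce the assertion to the computation already carried out in the proof of Theorem~\ref{ellipticheattrace}, applied to one cone at a time, and then to justify the passage from real to complex time. First, since $\text{\rm DTr}K_{M_q}(z)$ is defined as a finite sum over $\gamma \in DE(\Gamma_q)$ of integrals over the associated infinite cones $C_\gamma$, it suffices to prove the single-cone identity
\begin{equation*}
\int_{C_{q_\gamma}}(K_{C_{q_\gamma}}-K_{\mathbb{H}})(z,x,x)\,d\mu(x)
= \frac{e^{-z/4}}{q_\gamma\sqrt{16\pi z}}\sum_{n=1}^{q_\gamma-1}\int_{0}^{\infty}
\frac{e^{-u^{2}/4z}\cosh(u/2)}{\sinh^{2}(u/2)+\sin^{2}(n\pi/q_\gamma)}\,du
\end{equation*}
for each elliptic $\gamma$ with cone order $q_\gamma$, and then sum.

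Second, I would use the periodization formula \eqref{periodization} on $C_{q_\gamma}=\Gamma_\gamma\backslash\mathbb H$, where $\Gamma_\gamma$ is the finite cyclic group of order $q_\gamma$ generated by $\gamma$. Lifting $x$ to $\tilde x\in\mathbb H$, one has
\begin{equation*}
K_{C_{q_\gamma}}(z,x,x) \;=\; \sum_{k=0}^{q_\gamma-1} K_{\mathbb H}(z,d_{\mathbb H}(\tilde x,\gamma^k\tilde x)),
\end{equation*}
and the $k=0$ term is exactly $K_{\mathbb H}(z,0)$, which cancels against the $-K_{\mathbb H}$ in the integrand. Thus the cone integral becomes
\begin{equation*}
\sum_{n=1}^{q_\gamma-1}\int_{C_{q_\gamma}} K_{\mathbb H}\bigl(z,d_{\mathbb H}(\tilde x,\gamma^n\tilde x)\bigr)\,d\mu(x).
\end{equation*}
Each inner summand is precisely the integral denoted $I$ in the proof of Theorem~\ref{ellipticheattrace}, and the computation there (hyperbolic polar coordinates with $x=\cosh\rho$ and the distance formula \eqref{beardon}, followed by the change of variable interchanging the order of integration, the explicit $x$-integral using the arccosine antiderivative, and an integration by parts in $u$ killing the boundary contributions) produces exactly the claimed integrand in $u$.

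Third, the only new content beyond Theorem~\ref{ellipticheattrace} is that the time variable is complex. I would handle this by starting from the complex-time representation
\begin{equation*}
K_{\mathbb H}(z,d) = \frac{\sqrt 2\, e^{-z/4}}{(4\pi z)^{3/2}}\int_d^\infty \frac{u e^{-u^2/4z}\,du}{\sqrt{\cosh u-\cosh d}}
\end{equation*}
recalled in Remark~\ref{complexrealtime}. Every step of the real-time computation (Fubini to interchange the $\rho$- and $u$-integrals, the elementary $x$-integral, and the integration by parts in $u$) goes through verbatim provided one has absolute integrability, and this is supplied by the bound
\begin{equation*}
|K_{\mathbb H}(z,d)|\le e^{s^2/4t}\,t^{-3/2}(t^2+s^2)^{3/4}\,K_{\mathbb H}(\tau,d), \qquad \tau = |z|^2/t,
\end{equation*}
from Remark~\ref{complexrealtime}, which reduces everything to the absolutely convergent real-time version. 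The algebraic identities $\cosh u-1=2\sinh^2(u/2)$ and $\sinh u=2\sinh(u/2)\cosh(u/2)$ used to simplify $\tfrac{d}{du}\cos^{-1}(\cdot)$ are identities in $u$ and are unaffected by $z$ being complex, so the final form of the integrand is identical to the real-time case with $t$ replaced by $z$.

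The main obstacle, though mild, is verifying the complex-time interchange of integration and the vanishing of the boundary term at $u=\infty$ in the integration by parts: both reduce to dominating by the real-time heat kernel via the inequality above, after which summing over $n$ and over $\gamma \in DE(\Gamma_q)$ yields the claimed formula.
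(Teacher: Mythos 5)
Your proposal is correct and follows essentially the same route as the paper, which simply cites the computation in Theorem~\ref{ellipticheattrace}: unfold over each cone $C_{q_\gamma}=\Gamma_\gamma\backslash\mathbb H$, cancel the identity term against $-K_{\mathbb H}$, and repeat the polar-coordinate/Fubini/integration-by-parts calculation with $z$ in place of $t$. Your explicit justification of the complex-time steps via the domination bound of Remark~\ref{complexrealtime} is a detail the paper leaves implicit, and it is the right way to supply it.
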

\begin{proof}
One can use the same arguments as in the proof of Theorem \ref{ellipticheattrace}.
\end{proof}

\hskip 0.2in The next result presents the behavior through degeneration of the heat kernel and its derivatives. Namely, we have the following theorem. For brevity, we only state the result. For details, we refer the reader to \cite{JoLu 95} and Theorem 1.3 of \cite{JoLu 97a} which one can easily adapt to the elliptic degeneration setting.

\begin{thm}\label{convergenceheatkernels}
Let $R_{q}$ denote either $M_{q}$ or $C_{q}$. For $i
= 1, 2$, let $\nu_i = \nu_i(q)$ be a tangent vector of unit
length based at $x_i \in R_{q}$ which converges as $q \to
\infty$. Denote by $\partial_{\nu_i,x_i}$ the directional
derivative with respect to the variable $x_i$ in the direction
$\nu_i$. Assume that either $x_1$ or $x_2$ is not a degenerating
conical point. Then
\begin{align}\label{convergenceheatkernels1}
\lim_{q \to \infty} K_{R_{q}}(z,x_1,x_2) &=
K_{R_{\infty}}(z,x_1,x_2) \\
\label{convergenceheatkernels2}\lim_{q \to \infty}
\partial_{\nu_i,x_i} K_{R_{q}}(z,x_1,x_2) &=
\partial_{\nu_i,x_i} K_{R_{\infty}}(z,x_1,x_2) \textrm{ for }
i=1,2\\
\label{convergenceheatkernels3}\lim_{q \to \infty}
\partial_{\nu_1,x_1} \partial_{\nu_2,x_2} K_{R_{q}}(z,x_1,x_2)
&= \partial_{\nu_1,x_1}
\partial_{\nu_2,x_2} K_{R_{\infty}}(z,x_1,x_2)
\end{align}

\hskip .2in
\begin{enumerate}[(a)]
\item Let A be a bounded set in the complex plane with $\inf_{z\in A}
\rm{Re}(z) > 0.$ For any $\varepsilon > 0$, the convergence is
uniform on $A \times R_{q} \backslash C_{q,\varepsilon}
\times R_{q} \backslash C_{q,\varepsilon}$.

\item We define $D_{\varepsilon,\varepsilon'}$ to be an $\varepsilon'$
neighborhood of the diagonal of $R_{q} \backslash
C_{q,\varepsilon} \times R_{q} \backslash
C_{q,\varepsilon}$. That is,
\begin{equation*}
D_{\varepsilon,\varepsilon'} = \{ (x_1,x_2) \in R_{q}
\backslash C_{q,\varepsilon} \times R_{q} \backslash
C_{q,\varepsilon} : d(x_1,x_2) < \varepsilon' \}
\end{equation*}
Let $B$ be a bounded set in the complex plane with $\inf_{z\in B}
\rm{Re}(z) \ge 0$. For any $\varepsilon > 0$ and $\varepsilon' > 0$,
the convergence is uniform on $B \times ((R_{q} \backslash
C_{q,\varepsilon} \times R_{q} \backslash
C_{q,\varepsilon}) \backslash D_{\varepsilon,\varepsilon'})$.
\end{enumerate}
\end{thm}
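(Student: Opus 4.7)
The plan is to adapt Theorem 1.3 of \cite{JoLu 97a} to the elliptic degeneration setting, the only essential change being the way the degenerating subgroups enter the periodization. I would begin by writing both sides of (\ref{convergenceheatkernels1}) using the periodization identity
\begin{equation*}
K_{R_q}(z,x_1,x_2) = \sum_{\gamma \in \Gamma_q} K_{\mathbb{H}}\bigl(z, d_{\mathbb{H}}(\tilde{x}_1, \gamma \tilde{x}_2)\bigr),
\end{equation*}
and the analogous formula for $R_\infty$. Using the conjugacy-class decomposition of Section \ref{Regularized heat traces}, I would split $\Gamma_q$ into (i) hyperbolic and parabolic classes, (ii) elliptic classes coming from conical points that do \emph{not} degenerate, and (iii) elliptic classes associated to the $m$ degenerating cones $C_{q_k}$. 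The first two groups of terms are in natural bijection with corresponding conjugacy classes in $\Gamma_\infty$, and by the geometric convergence built into the definition of elliptic degeneration (Section \ref{Geometry of elliptic degeneration}), the associated distances $d_{\mathbb{H}}(\tilde{x}_1,\gamma\tilde{x}_2)$ converge uniformly on $R_q\setminus C_{q,\varepsilon}\times R_q\setminus C_{q,\varepsilon}$, so each term converges by continuity of $K_{\mathbb{H}}(z,\cdot)$.

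The real point, and the main obstacle, is the degenerating block. For each degenerating cone of order $q_k$, the sum $\sum_{n=1}^{q_k-1}\sum_{\kappa\in\Gamma_{\gamma_k}\backslash\Gamma_q}K_{\mathbb{H}}(z,\tilde{x}_1,\kappa^{-1}\gamma_k^n\kappa\tilde{x}_2)$ runs over a finite cyclic group of growing order, and it must be shown to converge to the corresponding parabolic contribution $\sum_{n=1}^{\infty}\sum_{\kappa\in\Gamma_{\eta_k}\backslash\Gamma_\infty}K_{\mathbb{H}}(z,\tilde{x}_1,\kappa^{-1}\eta_k^n\kappa\tilde{x}_2)$ in $\Gamma_\infty$ (plus the symmetric contribution for $n$ near $q_k$). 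Under the change of coordinates $\rho=2\tanh^{-1}(e^{-\alpha y})$, $\theta=2\pi x$ from Section \ref{Geometry of elliptic degeneration}, the distance formula (\ref{beardon}) for $\gamma_k^n$ on $C_{q_k}$ passes, as $\alpha=2\pi/q_k\to 0$, into the corresponding distance formula for the parabolic powers $\eta_k^n$ on $C_\infty$; the terms with $n$ close to $q_k$ account for negative powers of $\eta_k$. Pointwise convergence then follows by dominated convergence, using the explicit Gaussian decay of $K_{\mathbb{H}}(z,\rho)$ from (\ref{heatkernelonH1}) to dominate the tail in $n$ uniformly in $q$, while for complex $z=t+is$ the bound in Remark \ref{complexrealtime} reduces the domination to real time $\tau=|z|^2/t$.

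Uniform convergence on the sets in (a) and (b) would then follow from a standard two-step estimate: split the periodization into a finite part with uniformly bounded index (which converges uniformly by geometric convergence of the metrics on compact sets) and a tail whose sup norm is controlled by Proposition \ref{kerneluniformbounds}, Lemma \ref{supremumboundlemma}, and Corollary \ref{uniformboundscorollary}(a),(c), giving a bound in $|s|$ that is uniform in $q$ and tends to zero as the truncation is refined. The extension to $\inf_{z\in B}\mathrm{Re}(z)\ge 0$ in part (b), where $z$ is allowed to approach the imaginary axis, requires only that $(x_1,x_2)$ stay at positive distance $\varepsilon'$ from the diagonal, which ensures $d_{\mathbb{H}}(\tilde{x}_1,\gamma\tilde{x}_2)$ is bounded below and the heat kernel remains smooth at $z=is$.

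For the derivative statements (\ref{convergenceheatkernels2}) and (\ref{convergenceheatkernels3}), I would invoke interior elliptic regularity for the heat operator $\Delta+\partial_t$: on any relatively compact subset of $R_q\setminus C_{q,\varepsilon}$, the $C^0$-convergence of $K_{R_q}(z,\cdot,\cdot)$ together with the fact that each $K_{R_q}(z,\cdot,\cdot)$ solves the heat equation away from the diagonal, and that the metrics converge smoothly on compact sets disjoint from the cones (Section \ref{Geometry of elliptic degeneration}), upgrades the pointwise convergence to $C^k$-convergence for every $k$. Differentiating inside the periodization term by term is justified by the same tail estimate as before, now applied to derivatives of $K_{\mathbb{H}}$, which retain the same Gaussian decay in the distance. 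The assumption that $x_1$ or $x_2$ lies outside the degenerating conical neighborhood is exactly what ensures the distance $d_{\mathbb{H}}(\tilde{x}_1,\gamma\tilde{x}_2)$ stays bounded away from zero for all but finitely many $\gamma$, which is what makes the termwise differentiation and the tail domination work uniformly in $q$.
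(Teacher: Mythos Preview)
Your overall strategy is exactly what the paper does: it gives no detailed proof but simply states that one adapts \cite{JoLu 95} and Theorem 1.3 of \cite{JoLu 97a} to the elliptic setting, and your outline via periodization, conjugacy-class decomposition, and the explicit cone-to-cusp coordinate change of Section~\ref{Geometry of elliptic degeneration} is the natural way to carry that out.

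There is, however, a circularity in your uniform-convergence step. You invoke Corollary~\ref{uniformboundscorollary}(c) (and implicitly the Poisson-kernel machinery behind it) to control the tail, but Corollary~\ref{uniformboundscorollary}(c) is proved via Proposition~\ref{poissonkernelproperties}, whose upper bounds in parts (a) and (b) already rely on the limit (\ref{convergenceheatkernels2}) of the very theorem you are proving. Moreover, Lemma~\ref{supremumboundlemma} and Corollary~\ref{uniformboundscorollary}(c) concern $K_{M_q}-K_{C_q}$ and its integral over $C_{q,\varepsilon}$, not pointwise tails of the periodization sum, so they are not the right tools here in any case. The self-contained route, which is what \cite{JoLu 95} actually does, is to dominate the tail of $\sum_{\gamma}K_{\mathbb H}(z,d_{\mathbb H}(\tilde x_1,\gamma\tilde x_2))$ directly by a Stieltjes integral against the geodesic counting function $N_{\Gamma_q}(x;\rho)$ (as in the proof of Lemma~\ref{constantboundlemma}) together with the Gaussian decay of $K_{\mathbb H}$; uniformity in $q$ then comes from a counting-function bound that depends only on the injectivity radius on $R_q\setminus C_{q,\varepsilon}$, which is bounded below uniformly. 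Replace your tail estimate by that argument and the rest of your outline goes through.
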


\hskip 0.2in In the course of the proof of the main result of this section, we analyze the behavior of three integrals. These integrals represent the regularized trace of the heat kernel from which we exclude the identity term. In other words, these integrals amount the contribution to the regularized trace of the hyperbolic and elliptic heat traces minus the contribution of the degenerating elliptic trace. We will only look at the compact case. The adaptation of the Lemma \ref{threeintegralslemma} below to the non-compact case should immediately follow.

\begin{lemma}\label{threeintegralslemma}
Let $M =\Gamma \backslash \mathbb{H}$ be a compact connected hyperbolic
surface, having $m$ degenerating elliptic elements.
Then for every sufficiently small $\varepsilon$, we have
\begin{align*}
(\text{\rm HTr}K_M+\text{\rm ETr}K_M - \text{\rm DTr}K_M)(t+is)= &  \int_{M\backslash(m \times C_{q,\varepsilon})} (K_M-K_{\mathbb{H}})(t+is,x,x)d\mu(x)\\
                      &+ \int_{m \times C_{q,\varepsilon}}(K_M-K_{C_{q}})(t+is,x,x)d\mu(x)\\
                      &- \int_{m \times (C_{q}\backslash
                      C_{q,\varepsilon})}(K_{C_{q}}-K_{\mathbb{H}})(t+is,x,x)d\mu(x),
\end{align*}
where $\text{\rm ETr}K_M(t+is)$ and $\text{\rm DTr}K_M(t+is)$ denote the contribution to the trace of the non-degenerating and degenerating elliptic elements respectively.
\end{lemma}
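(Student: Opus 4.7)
The plan is to reduce this lemma to an algebraic manipulation of integral representations established earlier, combined with the geometric embedding property built into the definition of an elliptically degenerating surface. I would begin by invoking the periodization of the heat kernel. Since $M$ is compact, the set $P(\Gamma)$ is empty, so on the diagonal the decomposition preceding Theorem \ref{integralrepresentation} reads
\begin{align*}
K_M(z,x,x) \;=\; K_{\mathbb{H}}(z,0) \,+\, HK_M(z,x) \,+\, EK_M(z,x),
\end{align*}
where I use that $K_{\mathbb{H}}(z,x,x)$ depends only on the hyperbolic distance of $x$ to itself, which is zero. Rewriting $K_{\mathbb{H}}(z,0) = K_{\mathbb{H}}(z,x,x)$ and integrating both sides over $M$, Theorem \ref{integralrepresentation}(b),(d) (which justifies the exchange of sum and integral) yields
\begin{align*}
\int_M (K_M - K_{\mathbb{H}})(z,x,x)\, d\mu(x) \;=\; \text{\rm HTr}K_M(z) \,+\, \text{\rm ETr}K_M(z),
\end{align*}
where $\text{\rm ETr}K_M$ collects the contributions of all elliptic conjugacy classes (both degenerating and non-degenerating). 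Note that the identity extends to complex time $z = t+is$ by analytic continuation, using Remark \ref{complexrealtime}.

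Next I would invoke the definition of the degenerating heat trace, namely
\begin{align*}
\text{\rm DTr}K_{M_q}(z) \;=\; \int_{C_q} (K_{C_q} - K_{\mathbb{H}})(z,x,x)\, d\mu(x),
\end{align*}
and subtract it from the previous equality to obtain
\begin{align*}
(\text{\rm HTr}K_M + \text{\rm ETr}K_M - \text{\rm DTr}K_M)(z) \;=\; \int_M (K_M - K_{\mathbb{H}})(z,x,x)\, d\mu(x) \,-\, \int_{C_q}(K_{C_q} - K_{\mathbb{H}})(z,x,x)\, d\mu(x).
\end{align*}
At this point the proof becomes a partition-of-domain argument. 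Part (a) of the definition of an elliptically degenerating family guarantees that for every $\varepsilon < 1/2$ the region $m \times C_{q,\varepsilon}$ embeds isometrically into $M_q$, so I may split $M = (M \setminus (m \times C_{q,\varepsilon})) \sqcup (m \times C_{q,\varepsilon})$ in the first integral, and in parallel split $C_q = (m \times C_{q,\varepsilon}) \sqcup (m \times (C_q \setminus C_{q,\varepsilon}))$ in the second.

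After these splittings, the two pieces sitting over the common region $m \times C_{q,\varepsilon}$ can be grouped as
\begin{align*}
\int_{m \times C_{q,\varepsilon}} \bigl[(K_M - K_{\mathbb{H}}) - (K_{C_q} - K_{\mathbb{H}})\bigr](z,x,x)\, d\mu(x) \;=\; \int_{m \times C_{q,\varepsilon}} (K_M - K_{C_q})(z,x,x)\, d\mu(x),
\end{align*}
while the remaining pieces over $M \setminus (m \times C_{q,\varepsilon})$ and over $m \times (C_q \setminus C_{q,\varepsilon})$ appear untouched with signs matching the claimed identity. This completes the rearrangement. There is no genuine analytic obstacle here; the only point requiring care is that all integrals converge absolutely so that the splittings are legal, which is ensured by the integral representations of Theorem \ref{integralrepresentation} together with the uniform bounds of Proposition \ref{kerneluniformbounds}. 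The corresponding extension to the non-compact case, alluded to before the lemma, will require only that one additionally isolate and subtract the parabolic contribution $PK_M$, after which the same algebraic reorganization applies verbatim.
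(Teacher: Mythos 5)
Your proof is correct and follows essentially the same route as the paper's: both rest on the periodization of the heat kernel, the unfolding identity $\int_M DK_M\,d\mu = \int_{C_{q}}(K_{C_{q}}-K_{\mathbb{H}})\,d\mu$, and the isometric embedding of $C_{q,\varepsilon}$ into $M$, followed by a partition of the domains of integration. The only difference is organizational: you integrate first and cite Theorem \ref{integralrepresentation} for the unfolding, whereas the paper redoes that coset manipulation by hand inside the proof; your version also gets finiteness of the middle integral for free from the finiteness of the other terms, where the paper gives an independent maximum-principle argument.
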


\begin{proof}
The formal aspect of the above equality follows from the derivation of the group sum side
of the Selberg trace formula (in particular, see \cite{McK 72}). For
clarity, we give a proof below (see also \cite{DJ 98}).

\hskip 0.2in For simplicity, we will assume that $m = 1$, i.e. only one conical end degenerates. We will drop $m$ from notations and to further simplify matters, we will 
work with real time traces.   
Using the periodization of the heat kernel on $M$ allows us to write
\begin{align*}
HK_M(t,x)+EK_M(t,x) - DK_M(t,x)= K_M(t,x,x) -DK_M(t,x) - K_{\mathbb{H}}(t,0).
\end{align*}
It follows that
\begin{align*}
\text{\rm HTr}K_M(t)+\text{\rm ETr}K_M(t) -\text{\rm DTr}K_M(t) =& \int_{M\backslash 
C_{q,\varepsilon}} [K_M(t,x,x)-DK_M(t,x) -
K_{\mathbb{H}}(t,0)]d\mu (x)\\
&+\int_{C_{q,\varepsilon}} [K_M(t,x,x)-DK_M(t,x) -
K_{\mathbb{H}}(t,0)]d\mu (x).
\end{align*}
After separating the terms in the first integral of the right hand
side above, we can further write
\begin{align}\label{threeintegrals}
\text{\rm HTr}K_M(t)+\text{\rm ETr}K_M(t) - \text{\rm DTr}K_M(t)=&\int_{M\backslash 
C_{q,\varepsilon}} [K_M(t,x,x)-
K_{\mathbb{H}}(t,0)]d\mu (x) \notag\\
&-\int_{M\backslash C_{q,\varepsilon}} DK_M(t,x)d\mu (x) \\
&+\int_{C_{q,\varepsilon}} [K_M(t,x,x)-DK_M(t,x) -
K_{\mathbb{H}}(t,0)]d\mu (x).\notag
\end{align}
Denoting with $\gamma$ the only degenerating element in $DE(\Gamma)$ and using the decomposition into conjugacy classes, we can
write the degenerating contribution to the trace as follows
\begin{align*}
DK_M(t,x) &=  \sum_{n=1}^{q_{\gamma} -1}
\sum_{\kappa \in \Gamma_{\gamma} \backslash \Gamma}
K_{\mathbb{H}}(t,\tilde{x},\kappa^{-1} \gamma^n \kappa \tilde{x})\\
&=   \sum_{\kappa \in
\Gamma_{\gamma} \backslash \Gamma}  \sum_{n=1}^{q_{\gamma} -1} K_{\mathbb{H}}(t,\kappa
\tilde{x}, \gamma^n \kappa \tilde{x})\\
&= \sum_{\kappa \in \Gamma_{\gamma}
\backslash \Gamma} (K_{C_{q}}-K_{\mathbb{H}})(t,\kappa
\tilde{x}, \kappa \tilde{x})\\
&= 
(K_{C_{q}}-K_{\mathbb{H}})(t, \tilde{x},\tilde{x})+
\sum_{\substack{\kappa \in \Gamma_{\gamma} \backslash \Gamma \\
\kappa \ne id}} (K_{C_{q}}-K_{\mathbb{H}})(t,\kappa
\tilde{x}, \kappa \tilde{x}).
\end{align*}
To simplify notation we drop the tilde signs. We can now write the
third integral in the right hand side of equation
(\ref{threeintegrals}) as
\begin{align*}
\int_{ C_{q,\varepsilon}} [K_M(t,x,x)-&DK_M(t,x) -
K_{\mathbb{H}}(t,0)]d\mu (x)\\
=&\int_{ C_{q,\varepsilon}} \Bigg[K_M(t,x,x)-
K_{C_{q}}(t,x,x)+ K_{\mathbb{H}}(t,0)\\
&-\sum_{\substack{\kappa \in \Gamma_{\gamma} \backslash \Gamma \\
\kappa \ne id}} (K_{C_{q}}-K_{\mathbb{H}})(t,\kappa x,
\kappa x) - K_{\mathbb{H}}(t,0)\Bigg]d\mu (x)\\
=&\int_{ C_{q,\varepsilon}} (K_M-
K_{C_{q}})(t,x,x)d\mu(x)\\
&-\sum_{\substack{\kappa \in \Gamma_{\gamma} \backslash \Gamma \\
\kappa \ne id}}\int_{
C_{q,\varepsilon}}(K_{C_{q}}-K_{\mathbb{H}})(t,\kappa
x, \kappa x)d\mu(x).
\end{align*}
This allows us to write the equation (\ref{threeintegrals}) as
\begin{align}\label{fourintegrals}
\text{\rm HTr}K_M(t)+\text{\rm ETr}K_M(t) -\text{\rm DTr}K_M(t)=&\int_{M\backslash 
C_{q,\varepsilon}} (K_M-
K_{\mathbb{H}})(t,x,x)d\mu (x) \notag\\
&+\int_{ C_{q,\varepsilon}} (K_M -
K_{C_{q}})(t,x,x)d\mu (x) \notag\\
&-\int_{M\backslash C_{q,\varepsilon}} DK_M(t,x)d\mu (x)\\
&-\sum_{\substack{\kappa \in \Gamma_{\gamma} \backslash \Gamma \\
\kappa \ne id}}\int_{
C_{q,\varepsilon}}(K_{C_{q}}-K_{\mathbb{H}})(t,\kappa
x, \kappa x)d\mu(x).\notag
\end{align}
It remains to establish that the sum of the last two integrals in
the right hand side of (\ref{fourintegrals}) above equals to
\begin{align*}
\int_{C_{q}\backslash
C_{q,\varepsilon}}(K_{C_{q}}-K_{\mathbb{H}})(t,x,x)d\mu(x).
\end{align*}
In this direction, from the sum of the last two integrals of (\ref{fourintegrals}) we write
\begin{align*}
\int_{M\backslash  C_{q,\varepsilon}} &DK_M(t,x)d\mu (x)
+\sum_{\substack{\kappa \in \Gamma_{\gamma} \backslash \Gamma
\\ \kappa \ne id}}\int_{
C_{q,\varepsilon}}(K_{C_{q}}-K_{\mathbb{H}})(t,\kappa
x, \kappa x)d\mu(x) \\
=& \int_{M\backslash  C_{q,\varepsilon}} \Bigg[
(K_{C_{q}}-K_{\mathbb{H}})(t, x,x)+
\sum_{\substack{\kappa \in \Gamma_{\gamma} \backslash \Gamma \\
\kappa \ne id}} (K_{C_{q}}-K_{\mathbb{H}})(t,\kappa x,
\kappa x)\Bigg]d\mu(x) \\
&+\sum_{\substack{\kappa \in \Gamma_{\gamma} \backslash \Gamma \\
\kappa \ne id}}\int_{
C_{q,\varepsilon}}(K_{C_{q}}-K_{\mathbb{H}})(t,\kappa
x, \kappa x)d\mu(x).
\end{align*}
Collecting the two sums over $\kappa$ under one integral
allows us to write further
\begin{align*}
\int_{M\backslash  C_{q,\varepsilon}}
&(K_{C_{q}}-K_{\mathbb{H}})(t, x,x)d\mu(x)
+\sum_{\substack{\kappa \in \Gamma_{\gamma} \backslash \Gamma
\\ \kappa \ne id}}\int_{
M}(K_{C_{q}}-K_{\mathbb{H}})(t,\kappa
x, \kappa x)d\mu(x) \\
=&\int_{M\backslash  C_{q,\varepsilon}}
(K_{C_{q}}-K_{\mathbb{H}})(t, x,x)d\mu(x)\\
&+\sum_{\kappa \in \Gamma_{\gamma} \backslash \Gamma }\int_{
M}(K_{C_{q}}-K_{\mathbb{H}})(t,\kappa x, \kappa
x)d\mu(x) - \int_{M} (K_{C_{q}}-K_{\mathbb{H}})(t,
x,x)d\mu(x)\\
=& \sum_{\kappa \in \Gamma_{\gamma} \backslash \Gamma }\int_{ \Gamma
\backslash
\mathbb{H}}(K_{C_{q}}-K_{\mathbb{H}})(t,\kappa x,
\kappa x)d\mu(x) -\int_{C_{q,\varepsilon}}
(K_{C_{q}}-K_{\mathbb{H}})(t, x,x)d\mu(x)\\
=&\int_{C_{q}\backslash C_{q,\varepsilon}}
(K_{C_{q}}-K_{\mathbb{H}})(t, x,x)d\mu(x)
\end{align*}
since the sum of the integral in the fourth equality above unfolds
to $\Gamma_{\gamma}\backslash \mathbb{H}$ which represents the
infinite cone $C_{q}$. This proves the case for one degenerating elliptic element and
easily generalizes to several degenerating elements. This completes the proof of the
formal aspect of the lemma.

\hskip 0.2in We need to show finiteness for all $t>0$. The proof is
similar to Theorem 1.1 of \cite{JoLu 97b}. In particular this will
imply that the right hand side of the equation in the statement of
the lemma is independent of the choice of $\varepsilon$. That the
first and third integrals are bounded follows from 
Lemma \ref{constantboundlemma} and Proposition \ref{thmintegraloverdifferencecusps} respectively.

\hskip 0.2in 
It remains to establish the finiteness of the second
integral. We will prove this by two applications of the maximum
principle (\cite{Ch 84} page 180). Consider the
following function
\begin{equation*}
D(t,x,y) = K_M(t,x,y) - K_{C_{q}}(t,x,y).
\end{equation*}
Fix $y$  in some conical neighborhood $C_{q,\varepsilon}$ of the surface $M$.
Observe that $D(t,x,y)$ satisfies the heat equation with respect to
the $x$ and $t$.  Fix some $\varepsilon_0 > \varepsilon$, so that
all conical points have hyperbolic neighborhoods of area equal to
$\varepsilon_0$. Using the maximum principle, the function
$D(t,x,y)$ attains its maximum when $x$ lies on the boundary of the
conical neighborhood $C_{q, \varepsilon_0}$. Note that $y$ is contained $C_{q, \varepsilon}$ which lies inside the enveloping conical neighboorhood
$C_{q, \varepsilon_0}$. Using the maximum principle along with
the positivity of the heat kernels, we get the bound
\begin{align}\label{maxprinciple1}
- \sup_{\substack{z\in \partial C_{q,\varepsilon_0}\\
0 \le \tau \le t}} K_{C_{q}}(\tau,z,y) \le D(t,x,y) \le
\sup_{\substack{z\in \partial C_{q,\varepsilon_0}\\
0 \le \tau \le t}} K_M(\tau,z,y).
\end{align}
For each $z$, the terms in  (\ref{maxprinciple1})
satisfy the heat equation on $C_{q,\varepsilon_0/2}$ with zero
initial data. A second application of the maximum principle gives
the bound
\begin{align}\label{maxprinciple2}
- \sup_{\substack{z\in \partial C_{q,\varepsilon_0}\\ w\in
\partial C_{q,\varepsilon_0/2}\\0 \le \tau \le t}}
K_{C_{q}}(\tau,z,w) \le D(t,x,y) \le \sup_{\substack{z\in
\partial C_{q,\varepsilon_0}\\ w\in \partial C_{q,\varepsilon_0/2}
\\0 \le \tau \le t}}
K_M(\tau,z,w).
\end{align}
Standard bounds for the heat kernel (\cite{Ch 84} page
198) and equation (\ref{maxprinciple2}) provide upper and lower
bounds for the function $D(t,x,y)$. Thus the second integral in the statement of the lemma can be made
arbitrarily small since both the integrand as well as the domain of
integration can be made arbitrarily small.

\hskip 0.2in We remark here that the lower bounds in
(\ref{maxprinciple1}) and (\ref{maxprinciple2}) can be improved
trivially to zero combining (\ref{periodization}) with the
observation that the fundamental group of $C_{q}$ embeds into
the fundamental group of $M$. 
\end{proof}

\hskip 0.2in The following theorem is the principal result of this section as well as one of
the main tools used in this paper. For instance, this type
of regularized convergence will be used to show the convergence in
the context of elliptic degeneration of the spectral
weighted counting functions, the Selberg zeta function, the spectral
zeta function, as well as other functions such as the Poisson
kernel, the wave kernel, and the resolvent kernel. With these
remarks in mind, we state the following theorem.

\begin{thm}\label{pointwiseanduniformconvergence}
Let $M_{q}$ denote an elliptically degenerating family of
compact or non-compact hyperbolic Riemann surfaces of finite volume
converging to the non-compact hyperbolic surface $M_{\infty}.$ 
\begin{enumerate}[(a)]
\item (Pointwise) For fixed $z=t+is$ with $t>0$, we have
\begin{align*}
\lim_{q \to
\infty}[\text{\rm HTr}K_{M_{q}}(z)+\text{\rm ETr}K_{M_{q}}(z)-\text{\rm DTr}K_{M_{q}}(z)]
=\text{\rm HTr}K_{M_{\infty}}(z)+\text{\rm ETr}K_{M_{\infty}}(z).
\end{align*}
\item (Uniformity) For any $t>0$, there exists a constant $C$ such
that for all $s \in \mathbb{R}$ and all $q$, we have the bound
\begin{align*}
|\text{\rm HTr}K_{M_{q}}(z)+\text{\rm ETr}K_{M_{q}}(z)-\text{\rm DTr}K_{M_{q}}(z)| \le
C(1+|s|)^{3/2}.
\end{align*}
\end{enumerate}
\end{thm}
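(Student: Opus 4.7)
The plan is to invoke Lemma \ref{threeintegralslemma} to decompose the left-hand side into three integrals, write the analogous decomposition on $M_\infty$, and compare term-by-term using Theorem \ref{convergenceheatkernels} together with the uniform bounds established earlier in the section.

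Fix $\varepsilon \in (0, 1/2)$. Lemma \ref{threeintegralslemma} expresses the quantity $\text{HTr}K_{M_q}(z) + \text{ETr}K_{M_q}(z) - \text{DTr}K_{M_q}(z)$ as $(\mathrm{I})_q + (\mathrm{II})_q - (\mathrm{III})_q$, where $(\mathrm{I})_q = \int_{M_q \setminus (m \times C_{q,\varepsilon})}(K_{M_q} - K_{\mathbb{H}})(z,x,x)\, d\mu$, $(\mathrm{II})_q = \int_{m \times C_{q,\varepsilon}}(K_{M_q} - K_{C_q})(z,x,x)\, d\mu$, and $(\mathrm{III})_q = \int_{m \times (C_q \setminus C_{q,\varepsilon})}(K_{C_q} - K_{\mathbb{H}})(z,x,x)\, d\mu$, with $m$ the number of degenerating elliptic elements. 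The same derivation applied on $M_\infty$, with the centralizer of each new cusp (a parabolic cyclic subgroup) playing the role of $\Gamma_\gamma$, yields the analogous identity $\text{HTr}K_{M_\infty}(z) + \text{ETr}K_{M_\infty}(z) = (\mathrm{I})_\infty + (\mathrm{II})_\infty - (\mathrm{III})_\infty$, with each cone $C_q$ replaced by the limiting cusp $C_\infty$.

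For part (a), given $\eta > 0$, I would first choose $\varepsilon$ small enough that Corollary \ref{uniformboundscorollary}(d) forces $|(\mathrm{II})_q| + |(\mathrm{II})_\infty| < \eta/3$ uniformly in $q$. Next I would split $(\mathrm{III})_q$ at an auxiliary large radius $\delta$, using Corollary \ref{uniformboundscorollary}(b) to make the outer piece $\int_{m \times (C_q \setminus C_{q,\delta})}(K_{C_q} - K_{\mathbb{H}})$ less than $\eta/6$ uniformly in $q$, and similarly for $(\mathrm{III})_\infty$. The remaining contributions live on bounded regions, namely $M_q \setminus (m \times C_{q,\varepsilon})$ for $(\mathrm{I})_q$ and $m \times (C_{q,\delta} \setminus C_{q,\varepsilon})$ for the annular part of $(\mathrm{III})_q$; on these Theorem \ref{convergenceheatkernels}(a) supplies uniform convergence $K_{M_q} \to K_{M_\infty}$ and $K_{C_q} \to K_{C_\infty}$, so choosing $q$ sufficiently large completes the estimate.

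For part (b), I would bound each of the three pieces of Lemma \ref{threeintegralslemma} individually: Lemma \ref{constantboundlemma} bounds $|(\mathrm{I})_q|$ uniformly in $s$ and $q$, Corollary \ref{uniformboundscorollary}(c) yields $|(\mathrm{II})_q| \le C(1+|s|)^{3/2}$, and Corollary \ref{uniformboundscorollary}(a) yields $|(\mathrm{III})_q| \le C(1+|s|)^{3/2}$, so the sum satisfies the stated bound. The principal obstacle lies in the $(\mathrm{III})$ step of part (a): the cones have infinite volume and one cannot pass to the limit under the integral sign on $C_q \setminus C_{q,\varepsilon}$, so the two-scale truncation --- fixing the inner radius $\varepsilon$ and then introducing an auxiliary large outer radius $\delta$ whose tails are absorbed via Corollary \ref{uniformboundscorollary}(b) --- is essential before Theorem \ref{convergenceheatkernels}(a) can be invoked on the resulting compact annulus. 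A secondary point is justifying the analog of Lemma \ref{threeintegralslemma} on the non-compact $M_\infty$, which follows the identical conjugacy class decomposition as in \cite{JoLu 97b}.
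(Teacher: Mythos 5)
Your decomposition and term-by-term analysis coincide with the paper's proof of the compact case: Lemma \ref{threeintegralslemma} gives the three integrals, Corollary \ref{uniformboundscorollary}(d) kills (II) for small $\varepsilon$, the two-scale truncation of (III) at an auxiliary large $\delta$ with the tail absorbed by Corollary \ref{uniformboundscorollary}(b) is exactly the paper's step, Theorem \ref{convergenceheatkernels}(a) handles the remaining compact regions, and part (b) is assembled from Lemma \ref{constantboundlemma} and Corollary \ref{uniformboundscorollary}(a),(c) just as in the paper. You have also correctly identified the one point the paper leaves implicit, namely that the limit of the three integrals must be recognized as $\text{\rm HTr}K_{M_\infty}+\text{\rm ETr}K_{M_\infty}$ via the analogous decomposition on $M_\infty$ with parabolic centralizers.

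Where you genuinely diverge is the case in which the surfaces $M_{q}$ themselves carry cusps. You propose to handle this directly by extending Lemma \ref{threeintegralslemma} to the non-compact setting, which in effect is the five-integral decomposition the paper records in a later remark: two extra integrals (IV) and (V) attached to the pre-existing cusps, estimated like (II) and (III) but using the cusp bounds of Theorem 3.1 of \cite{JoLu 97a}. The paper instead reduces to the compact case: by Proposition \ref{Judge} each non-compact $M_{q}$ is realized as the limit of compact surfaces $M_{q,p}$ with $p$ additional degenerating cones, the compact statement is applied to the doubly indexed family, and the conclusion is extracted from an iterated-limit argument (and similarly for part (b), where the constant is checked to be independent of the auxiliary parameter). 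Your route is more direct but obliges you to actually prove the five-integral lemma and the uniform bounds on the two cusp integrals; the paper's route avoids that at the cost of invoking Judge's real-analytic family of cone metrics and of justifying the interchange of the limits in $q$ and $p$, which the paper does somewhat informally. Either way the argument closes, so I would only ask you to state explicitly, when $M_{q}$ is non-compact, which of the two mechanisms you are using, since the three-integral identity as literally stated in Lemma \ref{threeintegralslemma} is proved only for compact $M$.
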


\begin{proof}
We will prove part (a) in the case $M_{q}$ is an elliptically
degenerating compact family. Then we will extend the result in the
non-compact setting.

\hskip 0.2in If $M_{q}$ is a family of elliptically degenerated compact
hyperbolic Riemann surfaces, the above Lemma \ref{threeintegralslemma} allows us to write for sufficiently small
$\varepsilon$ and $t>0$
\begin{align}
\tag{I}(\text{\rm HTr}K_{M_{q}} + \text{\rm ETr}K_{M_{q}} &- \text{\rm DTr}K_{M_{q}})(t+is) = \int_{M_{q}\backslash C_{q, \varepsilon}} (K_{M_{q}}-K_{\mathbb{H}})(t+is,x,x)d\mu(x)\\
\tag{II}&+ \int_{C_{q, \varepsilon}} (K_{M_{q}}-K_{C_{q}})(t+is,x,x)d\mu(x)\\
\tag{III}&- \int_{C_{q}\backslash C_{q, \varepsilon}}
(K_{C_{q}}-K_{\mathbb{H}})(t+is,x,x)d\mu(x).
\end{align}

\hskip 0.2in As $q$ goes to infinity, Theorem \ref{convergenceheatkernels}
part (a) implies that the integrand in (I) converges uniformly on
compact subsets of $M_{q}$ bounded away from the developing
cusps and the cones corresponding to elliptic elements. On such
compact sets, the metric converges uniformly. Since the domain of
integration is compact, we get that
\begin{align*}
\lim_{q \to \infty} (\textrm{I}) = \int_{M_{\infty}\backslash
C_{\infty,\varepsilon}}
(K_{M_{\infty}}-K_{\mathbb{H}})(z,x,x)d\mu(x).
\end{align*}

\hskip 0.2in Integral (II) corresponds to the integral over
$C_{\infty, \varepsilon}$. Using Corollary
\ref{uniformboundscorollary} part (d) for a small choice of
$\varepsilon$, we can make integral (II) arbitrarily small.

\hskip 0.2in It remains to consider integral (III). In this
direction, choose $\delta >\varepsilon$ and break down integral
(III) in two, namely
\begin{align}\label{III breakup}
\notag\int_{C_{q}\backslash C_{q,\varepsilon}}
[K_{C_{q}}-K_{\mathbb{H}}](z,x,x)d\mu(x) =&
\int_{C_{q,\delta}\backslash C_{q,\varepsilon}}
[K_{C_{q}}-K_{\mathbb{H}}](z,x,x)d\mu(x)\\
&+\int_{C_{q}\backslash C_{q,\delta}}
[K_{C_{q}}-K_{\mathbb{H}}](z,x,x)d\mu(x).
\end{align}
From Theorem \ref{convergenceheatkernels} part (a), we have that the
integrand of (III) converges uniformly in $q$. Since the domain
$C_{q,\delta}\backslash C_{q,\varepsilon}$ is compact, we
have that
\begin{align*}
\lim_{q \to \infty}\int_{C_{q}\backslash
C_{q,\varepsilon}}
[K_{C_{q}}-K_{\mathbb{H}}](z,x,x)d\mu(x) =
\int_{C_{\infty,\delta}\backslash C_{\infty,\varepsilon}}
[K_{C_{\infty}}-K_{\mathbb{H}}](z,x,x)d\mu(x),
\end{align*}
where the second integral in (\ref{III breakup}) can be made
arbitrarily small by choosing $\delta$ small enough as in part (b) of
Corollary \ref{uniformboundscorollary}. This completes the proof of
Theorem \ref{pointwiseanduniformconvergence} part (a) in the compact
case.

\hskip 0.2in 
The non-compact version of  Theorem \ref{pointwiseanduniformconvergence} part (a) can be argued as follows. Suppose that $M_{q} = M_{q,\infty}$ has $p$ cusps. Using Proposition \ref{Judge}, the surface $M_{q,\infty}$ can be realized as the limit of an elliptically degenerating family of compact hyperbolic surfaces $M_{q,p}$ having $m+p$ conical points, by degenerating $p$ cones into cusps. The compact case of Theorem \ref{pointwiseanduniformconvergence} part (a) applies: For every fixed $q$, we have
\begin{align*}
\lim_{p\to \infty} \mathrm{Htr}K_{M_{q,p}}(z) + \mathrm{Etr}K_{M_{q,p}}(z) - \mathrm{Dtr}K_{M_{q,p}}(z) = 
\mathrm{Htr}K_{M_{q,\infty}}(z) + \mathrm{Etr}K_{M_{q,\infty}}(z)\,\,.
\end{align*}
The compact case of Theorem \ref{pointwiseanduniformconvergence} also applies to the family $M_{q,p}$ when we let $q$ and $p$ go simultaneously to infinity:
\begin{align*}
\lim_{q,p\to \infty} \mathrm{Htr}K_{M_{q,p}}(z) + \mathrm{Etr}K_{M_{q,p}}(z) - \mathrm{Dtr}K_{M_{q,p}}(z) = 
\mathrm{Htr}K_{M_{\infty,\infty}}(z) + \mathrm{Etr}K_{M_{\infty,\infty}}(z)\,\,.
\end{align*}
Having showed that for each $q$ a limit as $p \to \infty$ exists and that a limit exists when $q,p \to \infty$ simultaneously, we conclude that 
\begin{align*}
\lim_{q\to \infty} \mathrm{Htr}K_{M_{q,\infty}}(z) + \mathrm{Etr}K_{M_{q,\infty}}(z) - \mathrm{Dtr}K_{M_{q,\infty}}(z) = 
\mathrm{Htr}K_{M_{\infty,\infty}}(z) + \mathrm{Etr}K_{M_{\infty,\infty}}(z)\,\,.
\end{align*}

\hskip 0.2in We will prove part (b) of Theorem
\ref{pointwiseanduniformconvergence}, by first treating the compact
case and then extend the proof to the non-compact case. From Lemma
\ref{constantboundlemma} we have that integral (I) is $O(1)$,
independent of $q$. Integrals (II) and (III) are both
$O(s^{3/2})$ independent of $q$ according to Corollary
\ref{uniformboundscorollary} part (c) and part (a) respectively. This
proves the compact case.

\hskip 0.2in To prove the non-compact case, we note that any
non-compact hyperbolic surface $M_{q}$ can be realized as the
limit of a compact family $M_{q,q'}$. The compact case of
Theorem \ref{pointwiseanduniformconvergence} part (b) applies and we
can write
\begin{align*}
|\text{\rm HTr}K_{M_{q,q'}}(z)+\text{\rm ETr}K_{M_{q,q'}}(z)-\text{\rm DTr}K_{M_{q,q'}}(z)|
\le C(1+|s|)^{3/2}
\end{align*}
where the bound is independent of $q$ and $q'$ and the
constant $C$ is independent of the limiting surface. By letting
$q'$ go to infinity and applying Theorem
\ref{pointwiseanduniformconvergence} part (a), we can write
\begin{align*}
|\text{\rm HTr}K_{M_{q,\infty}}(z)+\text{\rm ETr}K_{M_{q,\infty}}(z)-\text{\rm DTr}K_{M_{q,\infty}}(z)|
\le C(1+|s|)^{3/2}
\end{align*}
uniformly in $q$. This completes the proof.
\end{proof}

%:remark uniform bound
\begin{rmk}\label{uniformboundrmk}
\emph{
By following the steps of Theorem \ref{pointwiseanduniformconvergence} with slight modifications, we can derive the following bound for the difference of traces. For $0< t <1$, there is a positive constant $C$ such that 
\begin{align*}
|\text{\rm HTr}K_{M_{q}}(z)+\text{\rm ETr}K_{M_{q}}(z)-\text{\rm DTr}K_{M_{q}}(z)|
\le Ct^{-2}(1+|s|)^{3/2}\,\,,
\end{align*}
holds. To do so, we need to revisit integrals (I) through (III) from Theorem \ref{pointwiseanduniformconvergence}. 
For integral (I), Lemma \ref{constantboundlemma} gives the upper bound $Ct^{-3/2}$. Looking back to formulas (\ref{boundintegralA}) through (\ref{boundintegralC}) in the proof of Corollary \ref{uniformboundscorollary} part (c), we see that $Ct(1+|s|)^{3/2}$ provides an upper bound for integral (II). 
For integral (III), we start by splitting it as  
\begin{align*}
\tag{III.1}\int_{C_{q}\backslash C_{q, \varepsilon}}
(K_{C_{q}}-K_{\mathbb{H}})(t,x,x)d\mu(x)
=&\int_{C_{q}\backslash C_{q, \varepsilon_1}}
(K_{C_{q}}-K_{\mathbb{H}})(t,x,x)d\mu(x)\\
\tag{III.2}&+\int_{C_{q,\varepsilon_1}\backslash C_{q, \varepsilon}}
(K_{C_{q}}-K_{\mathbb{H}})(t,x,x)d\mu(x).
\end{align*} 
with $\varepsilon_1 > \textrm{max} \{ 2\pi, \varepsilon\}$. 
We recall Proposition \ref{thmintegraloverdifferencecusps}, so that for integral (III.1) we obtain the bound
\begin{align*}
|\textrm{III.1}| \le \frac{1}{\sqrt{|z|}}\bigg[\zeta (1+2\eta \gamma)+ \pi\bigg],
\end{align*}
with $\eta = t/(4(t^2+s^2))$ and $\gamma = \log(1 + (\varepsilon_1/2\pi)^2)$. If $s \ne 0$, then $2\eta \gamma \searrow 0 $ as $t \searrow 0$; consequently  
$\zeta(1+2\eta\gamma) \sim (2\eta\gamma)^{-1}$ and 
\begin{align*}
|\textrm{III.1}| &\le \frac{1}{\sqrt{t^2+s^2}}\bigg[ \frac{2(t^2+s^2)}{\gamma t} + c_{\gamma}\bigg]
= (t^2+s^2)^{3/4}\bigg[ \frac{c_{\gamma}}{t^2} + \frac{2}{\gamma t}\bigg] 
\le C_{\gamma} t^{-2}(1+|s|)^{3/2}.
\end{align*}
For integral (III.2) we use the Corollary \ref{uniformboundscorollary} part (c) and the inclusion of heat kernels as follows:
\begin{align*}
|\textrm{III.2}| \le \int_{C_{q,\varepsilon_1}}
(K_{M_{q}}-K_{\mathbb{H}})(t,x,x)d\mu(x) \le Ct(1+|s|)^{3/2}.
\end{align*}
}
\end{rmk}

\hskip 0.2in As a consequence of Theorem \ref{pointwiseanduniformconvergence} part (a), we have the following corollary, which described the small time asymptotic behavior for the regularized trace of the heat kernel.

\begin{cor}\label{smalltasymptotics}
Let $M_{q}$ denote an elliptically degenerated family of
compact or non-compact hyperbolic Riemann surfaces of finite volume
which converges to the non-compact hyperbolic surface $M_{\infty}$.
Then for any fixed $\delta>0$, there exists a positive constant $c$
such that for all $0<t<\delta$, we have
\begin{align*}
\text{\rm HTr}K_{M_{q}}(t)+ \text{\rm ETr}K_{M_{q}}(t) - \text{\rm DTr}K_{M_{q}}(t) =
O\left(t^{-3/2}\right)\,\,
\end{align*}
uniformly in $q$.
\end{cor}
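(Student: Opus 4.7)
The plan is to specialize the argument of Remark \ref{uniformboundrmk} to real time $z = t$ and observe that the exponent improves from $t^{-2}$ (which arose from the complex-time estimate) to $t^{-3/2}$. Concretely, I would first invoke Lemma \ref{threeintegralslemma} to decompose, for a suitable small $\varepsilon$,
\begin{align*}
(\text{HTr}K_{M_q}+\text{ETr}K_{M_q}-\text{DTr}K_{M_q})(t)
= \underbrace{\int_{M_q\setminus C_{q,\varepsilon}}(K_{M_q}-K_{\mathbb{H}})(t,x,x)d\mu}_{\text{(I)}}
+ \underbrace{\int_{C_{q,\varepsilon}}(K_{M_q}-K_{C_q})(t,x,x)d\mu}_{\text{(II)}}
- \underbrace{\int_{C_q\setminus C_{q,\varepsilon}}(K_{C_q}-K_{\mathbb{H}})(t,x,x)d\mu}_{\text{(III)}}.
\end{align*}

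For (I), I would use the explicit bound (\ref{constantboundlemma5}) from Lemma \ref{constantboundlemma}, specialized at $s=0$. That bound reads
$$|(\text{I})| \le \frac{2\pi(2g-2+\kappa)e^{-t/4}}{4\pi t}\Bigl(\frac{C}{\sqrt{4\pi t}}+1\Bigr),$$
which is $O(t^{-3/2})$ uniformly in $q$ on $(0,\delta)$, and this is what will ultimately drive the final bound. For (II), Corollary \ref{uniformboundscorollary} part (c) at $s=0$ gives $|(\text{II})| = O(t)$ uniformly in $q$ (indeed the proof of that corollary shows each of the three subintegrals A, B, C is $O(t)$ when $s=0$). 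So both (I) and (II) contribute at worst $O(t^{-3/2})$.

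The main work is integral (III), where I would split as in Remark \ref{uniformboundrmk}: pick a cutoff $\varepsilon_1 > \max\{2\pi,\varepsilon\}$ and write (III) = (III.1) + (III.2) with (III.1) the integral over $C_q\setminus C_{q,\varepsilon_1}$ and (III.2) the integral over $C_{q,\varepsilon_1}\setminus C_{q,\varepsilon}$. For (III.2), bounding the integrand by $K_{M_q}-K_{\mathbb{H}}$ via the inclusion of fundamental groups and then applying Corollary \ref{uniformboundscorollary} part (c) (at $s=0$) yields $|(\text{III.2})| = O(t)$. For (III.1), Proposition \ref{thmintegraloverdifferencecusps} with $\delta$ replaced by $\varepsilon_1$ gives
$$|(\text{III.1})| \le \frac{e^{-t/4}}{\sqrt{\pi t}}\Bigl(\frac{\varepsilon_1}{2\pi}\Bigr)^{-2\eta\gamma}\bigl[\zeta_{\mathbb{Q}}(1+2\eta\gamma)+\pi\bigr],$$
with $\eta = 1/(4t)$ and $\gamma = \log(1+(\varepsilon_1/2\pi)^2)$. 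The point I want to make here is that the real-time case behaves \emph{better} than the complex-time case: since $2\eta\gamma = \gamma/(2t) \to \infty$ as $t\to 0^+$, one has $\zeta_{\mathbb{Q}}(1+2\eta\gamma) \to 1$ and, because $\varepsilon_1 > 2\pi$, the factor $(\varepsilon_1/2\pi)^{-2\eta\gamma}$ decays exponentially. Thus $|(\text{III.1})| = O(t^{-1/2})$, which is dominated by the $t^{-3/2}$ contribution of (I).

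Combining the three estimates yields $|(\text{I})+(\text{II})-(\text{III})| = O(t^{-3/2})$ uniformly in $q$ on $(0,\delta)$, as claimed. The step that needs the most care is verifying the improvement from $t^{-2}$ to $t^{-1/2}$ in (III.1): one must check that choosing $\varepsilon_1 > 2\pi$ is legitimate (which it is, since $\varepsilon_1$ is only an auxiliary cutoff for the deep-cone region) and keep track of the fact that the subsequent application of Corollary \ref{uniformboundscorollary}(c) to (III.2) requires $\varepsilon_1$ to be a permissible choice of the cone-neighborhood parameter, which it is for any fixed $\varepsilon_1$ once $q$ is large enough (and only large $q$ is of interest by hypothesis).
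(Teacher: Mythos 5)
Your proposal is correct and follows essentially the same route as the paper: the paper's proof likewise invokes the decomposition and bounds of Remark \ref{uniformboundrmk}, keeps the $O(t^{-3/2})$ bound on integral (I) and the $O(1)$ bounds on (II) and (III.2), and improves (III.1) to $O(t^{-1/2})$ by noting that at $s=0$ one has $2\eta\gamma=\gamma/(2t)\to\infty$ so that $\zeta_{\mathbb{Q}}(1+2\eta\gamma)$ stays bounded. Your additional observation that the factor $(\varepsilon_1/2\pi)^{-2\eta\gamma}$ decays for $\varepsilon_1>2\pi$ is a harmless refinement of the same estimate.
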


\begin{proof}
As per Remark \ref{uniformboundrmk} above, integral (I) is $O(t^{-3/2})$ while integrals (II) and (III.2) are $O(1).$
The only change is with integral (III.1) for which the special case $s = 0$ allows to improve its upper bound. Namely,
$2\eta \gamma = \gamma /(2t ) \to \infty $ as $t \searrow 0$; consequently  
$\zeta(1+2\eta\gamma) \sim 1 + (2\eta\gamma)^{-1}$ and 
\begin{align*}
|\textrm{III.1}| &\le \frac{1}{\sqrt{t}}\bigg[ \frac{2t}{\gamma} + c_{\gamma}\bigg]
\le C_{\gamma} t^{-1/2}.
\end{align*}
\end{proof}

\begin{rmk}
\emph{
In the non-compact setting, aside from the $m$ degenerating conical points, each surface in the family has $p$ cusps. Consequently, Lemma \ref{threeintegralslemma} becomes a 5 integral lemma
\begin{align*}
(\text{\rm HTr}K_M+\text{\rm ETr}K_M - \text{\rm DTr}K_M)(t)= &  \int_{M\backslash (m \times C_{q,\varepsilon} \cup\,\, p \times C_{\infty,\varepsilon} )  } (K_M-K_{\mathbb{H}})(t,x,x)d\mu(x)\\
&+ \int_{m \times C_{q,\varepsilon}}(K_M-K_{C_{q}})(t,x,x)d\mu(x)\\
&- \int_{m \times (C_{q}\backslash C_{q,\varepsilon})}(K_{C_{q}}-K_{\mathbb{H}})(t,x,x)d\mu(x)\\
&+ \int_{p \times C_{\infty,\varepsilon}}(K_M-K_{C_{\infty}})(t,x,x)d\mu(x)\\
&- \int_{p \times (C_{\infty}\backslash C_{\infty,\varepsilon})}(K_{C_{\infty}}-K_{\mathbb{H}})(t,x,x)d\mu(x)\,\,.
\end{align*}
Integrals (IV) and (V) above are the counterparts of integrals (II) and (III) respectively. The boundedness of integral (IV) follows using similar arguments as in the case of in integral (II). Unlike integral (III),  integral (V) turns out to be bounded. Namely, we split integral (V) as
\begin{align*}
\int_{C_{\infty}\backslash C_{\infty, \varepsilon}}
(K_{C_{\infty}}-K_{\mathbb{H}})(t,x,x)d\mu(x)
=&\int_{C_{\infty}\backslash C_{\infty, \varepsilon_1}}
(K_{C_{\infty}}-K_{\mathbb{H}})(t,x,x)d\mu(x)\\
&+\int_{C_{\infty,\varepsilon_1}\backslash C_{\infty, \varepsilon}}
(K_{C_{\infty}}-K_{\mathbb{H}})(t,x,x)d\mu(x)
\end{align*}
with $\varepsilon_1 > \textrm{max} \{2\sqrt{2},\varepsilon\}.$ The second integral on the right-hand side above is over compact domain. The boundedness of the first integral follows from Theorem 3.1 of \cite{JoLu 97a}. 
}
\end{rmk}

\hskip 0.20in Theorem \ref{pointwiseanduniformconvergence} part (a) shows that the
hyperbolic heat trace plus the elliptic heat trace minus the
degenerating heat trace converges pointwise to the hyperbolic heat
trace plus the elliptic heat trace on the limiting surface. Through
elliptic degeneration, the angles that parametrize the degenerating
cones become arbitrarily small as these cones turn into cusps. Their
contribution to the volume of $M_{q}$ becomes arbitrarily
small. This means the the volume of $M_{q}$ converges to the
volume of the limiting surface $M_{\infty}$. It follows that the
regularized heat trace minus the degenerating heat trace on
$M_{q}$ converges pointwise to the regularized heat trace on
$M_{\infty}$. The corollary to Theorem \ref{pointwiseanduniformconvergence}
above shows uniformity of convergence near zero. We need to consider
the asymptotics for all positive $t$.

\section{Convergence of small eigenvalues and eigenfunctions}
\label{Convergence of small eigenfunctions}

\hskip 0.2in In this section we will show the convergence of small eigenvalues and corresponding
eigenfunctions through elliptic degeneration. This result is needed
in the proof of Theorem \ref{uniformlongtimeasymptotics}.  
As stated in the introduction, we will systematically develop in \cite{GJ 16}
applications of the results in the present article to determine asymptotic
behavior of various spectral functions through elliptic degeneration.  

\hskip 0.2in The following theorem is the main result of this
section.
\begin{thm}\label{specconvinverselaplace}
Let $f$ be any measurable function on $\mathbb{R}_{+}$ such that
there exist a vertical line $t>0$ where its Laplace transform
$\mathscr{L}(f)(t+is)$ is $L^1$ as a function of $s$. Let
$M_{q}$ be an elliptically degenerating family of hyperbolic
Riemann surfaces of finite volume which converge to the limiting
surface $M_{\infty}$. Let $x$ and $y$ be any two points which remain
bounded away from the developing cusps. For $z=t+is$ with $t>0$ and any $T>0$, we have
the limit
\begin{align*}
\lim_{q\to \infty} \frac{1}{2\pi i}
\int_{t-i\infty}^{t+i\infty}
K_{M_{q}}(z,x,y)\mathscr{L}(f)(z)e^{Tz} \frac{dz}{z} =
\frac{1}{2\pi i} \int_{t-i\infty}^{t+i\infty}
K_{M_{\infty}}(z,x,y)\mathscr{L}(f)(z)e^{Tz} \frac{dz}{z}.
\end{align*}
The convergence is uniform on compact subsets of $M_{\infty}\times
M_{\infty}$.
\end{thm}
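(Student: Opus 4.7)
The plan is to apply the dominated convergence theorem to the integral along the vertical line $\mathrm{Re}(z)=t$. The essential ingredient is a bound on the heat kernel that is uniform in both $s \in \mathbb{R}$ and $q$. I would begin from the spectral expansions (\ref{heatkernelexpansioncompact}) in the compact case and (\ref{heatkernelexpansionnoncompact}) in the non-compact case, and use Cauchy--Schwarz (applied to the discrete part and, via the identity $|e^{-(1/4+r^2)z}|=e^{-(1/4+r^2)t}$, to the continuous Eisenstein integral) to obtain the fundamental inequality
\begin{equation*}
2|K_{M_q}(t+is,x,y)| \le K_{M_q}(t,x,x)+K_{M_q}(t,y,y),
\end{equation*}
which is valid for every $s \in \mathbb{R}$ and is exactly in the spirit of Proposition \ref{kerneluniformbounds}.

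Because $x$ and $y$ are assumed to remain bounded away from the developing cusps, Theorem \ref{convergenceheatkernels} part (a) implies that $K_{M_q}(t,\cdot,\cdot)$ converges uniformly to $K_{M_\infty}(t,\cdot,\cdot)$ on any compact subset of $M_\infty \times M_\infty$ that stays away from the developing cusps. In particular, on any such compact set there is a constant $C$ independent of $q$ with $K_{M_q}(t,x,x)+K_{M_q}(t,y,y)\le C$. Combining this with the elementary estimates $|e^{Tz}|=e^{Tt}$ and $|z|^{-1}\le t^{-1}$ along the line $\mathrm{Re}(z)=t$, the integrand satisfies
\begin{equation*}
\left|K_{M_q}(z,x,y)\,\mathscr{L}(f)(z)\,e^{Tz}\,\frac{1}{z}\right| \le \frac{C\,e^{Tt}}{2t}\,|\mathscr{L}(f)(t+is)|,
\end{equation*}
and the hypothesis on $\mathscr{L}(f)$ ensures that this majorant is $L^1$ in $s$ independently of $q$ and of the compact set in $M_\infty\times M_\infty$ containing $(x,y)$.

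Pointwise convergence $K_{M_q}(z,x,y) \to K_{M_\infty}(z,x,y)$ for each fixed $z=t+is$ is the content of (\ref{convergenceheatkernels1}). Together with the uniform $L^1$ dominating function just produced, Lebesgue's dominated convergence theorem yields the limit. Uniformity on compact subsets of $M_\infty \times M_\infty$ follows immediately: by Theorem \ref{convergenceheatkernels} part (a), the pointwise convergence of $K_{M_q}(z,x,y)$ is uniform in $(x,y)$ on such sets for every fixed $z$ with $\mathrm{Re}(z)=t$, while the dominating function above is independent of $(x,y)$, so splitting the real line into a large interval $[-R,R]$ (on which one has uniform convergence of the integrand by the uniform-on-compacta convergence statement) and its complement (controlled by the tail of the $L^1$ majorant), one obtains uniform convergence of the full integrals.

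The main obstacle is the uniform-in-$s$ control of $|K_{M_q}(t+is,x,y)|$; once the spectral bound reduces this to the boundedness of $K_{M_q}(t,x,x)$ uniformly in $q$ on compacta away from the developing cusps, the dominated convergence argument is routine. Some additional care is required in the non-compact case to treat the continuous spectrum, but the modulus identity for $e^{-(1/4+r^2)z}$ reduces the continuous-spectrum contribution to the same Cauchy--Schwarz argument as in the compact case.
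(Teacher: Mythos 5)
Your proposal is correct and follows essentially the same route as the paper: the authors likewise invoke the uniform convergence of the heat kernel away from the developing cusps (Theorem \ref{convergenceheatkernels}) together with the $L^1$ hypothesis on $\mathscr{L}(f)$ and conclude by dominated convergence. You have merely supplied the details the paper leaves implicit, namely the uniform-in-$s$ and uniform-in-$q$ dominating function obtained from the spectral bound of Proposition \ref{kerneluniformbounds} and the boundedness of $K_{M_{q}}(t,x,x)$ on compacta.
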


\begin{proof}
For $t>0$ and any points $x$ and $y$ which are bounded away from the developing cusps, the heat kernel converges uniformly. The convergence of the heat kernel
(see Theorem \ref{convergenceheatkernels}) together with the $L^1$ assumption for the Laplace
transform of $f$ fulfill the hypotheses of the dominated convergence theorem which we apply to conclude
the proof.
\end{proof}

\begin{rmk}
\emph{For our purposes here, the function $f$ described in Theorem
\ref{specconvinverselaplace} can be any function that is
continuously differentiable on $\mathbb{R}_{+}$ which vanishes at
$t=0$ and whose first derivative is of bounded variation. }
\end{rmk}

\begin{lemma}\label{specconvlemma}
Let $M$ be a fixed hyperbolic Riemann surface of finite volume and
$f$ be a function as described in the previous remark.
\begin{enumerate}[(a)]
\item If $M$ is compact, then
\begin{align*}
\frac{1}{2\pi i} \int_{t-i\infty}^{t+i\infty}
K_{M}(z,x,y)\mathscr{L}(f)(z)e^{Tz} \frac{dz}{z} =
\sum_{\lambda_{M,n}<T} f(T-\lambda_{M,n})\phi_{M,n}(x)\phi_{M,n}(y).
\end{align*}
\item If $M$ is not compact, then
\begin{align*}
\frac{1}{2\pi i} \int_{t-i\infty}^{t+i\infty}
K_{M}(z,x,y)\mathscr{L}(f)(z)e^{Tz} \frac{dz}{z} &=
\sum_{\lambda_{M,n}<T}
f(T-\lambda_{M,n})\phi_{M,n}(x)\phi_{M,n}(y)\\
+ \frac{1}{2\pi} \sum_{P}\int_{0}^{\sqrt{T-1/4}} f(T-1/4-r^2)&
E_{\mathrm{par};M,P}(1/2+ir,x)\overline{E_{\mathrm{par};M,P}(1/2+ir,y)}dr,
\end{align*}
where the last integral is zero if $T \le 1/4$.
\end{enumerate}
\end{lemma}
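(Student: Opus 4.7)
The plan is to substitute the spectral expansion of $K_M$ into the contour integral and evaluate the resulting expression term-by-term via Bromwich inversion. For part (a), I would invoke the spectral expansion (\ref{heatkernelexpansioncompact}), which converges uniformly on $[t,\infty)\times M\times M$. Combined with the $L^1$-integrability of $\mathscr{L}(f)(t+is)$ along $\mathrm{Re}(z) = t$, Fubini's theorem justifies interchanging the eigen-sum with the contour integral. Each summand reduces to $\phi_{M,n}(x)\phi_{M,n}(y)$ multiplied by the scalar integral
$$
\frac{1}{2\pi i}\int_{t-i\infty}^{t+i\infty} \mathscr{L}(f)(z)\,e^{(T-\lambda_{M,n})z}\,\frac{dz}{z},
$$
which is a standard inverse Laplace transform. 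Under the stated regularity hypotheses on $f$ (continuously differentiable, $f(0)=0$, $f'$ of bounded variation), this contour integral equals $f(T-\lambda_{M,n})$ when $T > \lambda_{M,n}$ and vanishes when $T \le \lambda_{M,n}$ (shifting the contour to the right picks up no residues). Summing over $n$ yields the formula.

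For part (b), I would proceed analogously using the non-compact spectral expansion (\ref{heatkernelexpansionnoncompact}). The discrete component is treated exactly as in part (a). The continuous component requires a further interchange: swap the $r$-integral over $[0,\infty)$ with the contour integral in $z$, once again justified by the $L^1$-control on $\mathscr{L}(f)$ and by the standard growth bounds on the parabolic Eisenstein series $E_{\mathrm{par};M,P}(1/2+ir,\cdot)$ along the critical line from \cite{He 83}. The resulting scalar integral
$$
\frac{1}{2\pi i}\int_{t-i\infty}^{t+i\infty} \mathscr{L}(f)(z)\,e^{(T-1/4-r^2)z}\,\frac{dz}{z}
$$
produces $f(T-1/4-r^2)$ when $r^2 < T-1/4$ and zero otherwise. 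This truncates the $r$-range to $[0,\sqrt{T-1/4}]$ and forces the entire continuous contribution to vanish for $T\le 1/4$, matching the stated expression.

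The main obstacle is the rigorous justification of the interchanges of summation/integration with the contour integral. In the compact case this requires a dominating bound for $\sum_n e^{-\lambda_n t}|\phi_n(x)\phi_n(y)|\cdot |\mathscr{L}(f)(t+is)|/|t+is|$, which follows from pointwise heat-kernel bounds together with the $L^1$ hypothesis on $\mathscr{L}(f)$. In the non-compact case one must additionally accommodate the growth of $E_{\mathrm{par};M,P}(1/2+ir,x)$ in $r$ on compact $x$-sets. Once these dominated-convergence arguments are in place, what remains is a direct and standard computation of the inner Bromwich integrals.
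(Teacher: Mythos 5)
Your proposal is correct and takes essentially the same route as the paper, whose proof consists precisely of citing the spectral decomposition of the heat kernel together with the inversion theorem for the Laplace transform from \cite{Wi 41}. The only point worth flagging is that your evaluation of the inner Bromwich integral as $f(T-\lambda_{M,n})$ (rather than $\int_0^{T-\lambda_{M,n}}f(u)\,du$) tacitly uses Widder's Laplace--Stieltjes convention $\mathscr{L}(f)(z)=\int_0^\infty e^{-zu}\,df(u)$, which is indeed the convention consistent with the cited pages of \cite{Wi 41} and with the corollary for $f(t)=t$ that follows the lemma.
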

\begin{proof}
The lemma follows from the spectral decomposition of the heat kernel (see (\ref{heatkernelexpansioncompact}) and (\ref{heatkernelexpansionnoncompact}))
and the basic properties of the inverse Laplace transform (see
\cite{Wi 41} pages 73 and 91).
\end{proof}

\begin{cor}
Let $M_{q}$ be a degenerating family of non-compact hyperbolic
Riemann surfaces with limiting surface $M_{\infty}$. Let $x$ and $y$
be two points bounded away from the developing cusps. Then, for
every fixed $T>0$, we have
\begin{align*}
\lim_{q\to \infty}\sum_{\lambda_{M_{q},n}<T}
(T-\lambda_{M_{q},n})\phi_{M_{q},n}(x)\phi_{M_{q},n}(y)
=&  \sum_{\lambda_{M_{\infty},n}<T}
(T-\lambda_{M_{\infty},n})\phi_{M_{\infty},n}(x)\phi_{M_{\infty},n}(y)\\
+\frac{1}{2\pi} \sum_{P}\int_{0}^{\sqrt{T-1/4}} (T-1/4-r^2)
E_{\mathrm{par};M_{\infty},P}&(1/2+ir,x)\overline{E_{\mathrm{par};M_{\infty},P}(1/2+ir,y)}dr
\end{align*}
where the integral is zero if $T<1/4$.
\end{cor}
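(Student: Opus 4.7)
The corollary should follow by combining Theorem \ref{specconvinverselaplace} with Lemma \ref{specconvlemma}(b), applied to a test function that produces the weights $T-\lambda$ (respectively $T-1/4-r^2$) appearing on the right. A natural choice is $f(u) = u$ for $u \ge 0$ (and zero otherwise); it is continuously differentiable on $\mathbb{R}_+$ with $f(0)=0$ and $f'\equiv 1$ of bounded variation, as the remark preceding the lemma requires, and its Laplace transform $\mathscr{L}(f)(z) = 1/z^2$ yields $\mathscr{L}(f)(z)/z = 1/z^3$, which lies in $L^1$ on every vertical line $\mathrm{Re}(z) = t > 0$. Hence both hypotheses are in place, and $f(T-\lambda) = T - \lambda$ for $\lambda < T$, matching the weights in the statement.

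First I would apply Lemma \ref{specconvlemma}(b) separately to $R = M_{q}$ and $R = M_{\infty}$ in order to rewrite the contour integral
$$\mathcal{I}_R(T,x,y) \;:=\; \frac{1}{2\pi i}\int_{t-i\infty}^{t+i\infty} K_R(z,x,y)\,\mathscr{L}(f)(z)\, e^{Tz}\,\frac{dz}{z}$$
as a discrete spectral sum plus a sum of Eisenstein integrals running over the cusps of $R$. Next, Theorem \ref{specconvinverselaplace} applies to this choice of $f$ and gives $\mathcal{I}_{M_{q}}(T,x,y) \to \mathcal{I}_{M_{\infty}}(T,x,y)$ as $q \to \infty$, uniformly on compacta away from the developing cusps.

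The remaining step is to peel off, from both sides of the resulting equality of limits, the Eisenstein contributions attached to the $p-m$ cusps that $M_{q}$ shares with $M_{\infty}$. Their parabolic Eisenstein series converge uniformly on compacta through elliptic degeneration --- a fact established in \cite{GvP 09} and also deducible here from Theorem \ref{convergenceheatkernels} combined with the spectral expansion (\ref{heatkernelexpansionnoncompact}). After this cancellation the sum $\sum_P$ on the right-hand side is left running over the $m$ new cusps of $M_{\infty}$ born from the degenerating conical points, yielding the stated identity. The main technical obstacle is precisely this last step, i.e., controlling the Eisenstein integrals attached to the persistent cusps of $M_{q}$ as $q \to \infty$; by contrast, the contour-integral identity and the convergence of $\mathcal{I}_{M_{q}}$ to $\mathcal{I}_{M_{\infty}}$ are immediate consequences of Lemma \ref{specconvlemma} and Theorem \ref{specconvinverselaplace}, respectively.
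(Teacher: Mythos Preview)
Your core approach is exactly the paper's: take $f(t)=t$, use Theorem \ref{specconvinverselaplace} for the convergence of the contour integral, and use Lemma \ref{specconvlemma} to interpret both sides spectrally. The paper's proof is literally the single sentence ``This follows directly from Lemma \ref{specconvlemma} with $f(t)=t$.''

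Where you diverge is the final ``peeling off'' step, and this is where your argument goes wrong relative to the stated result. After your cancellation of the Eisenstein contributions attached to the persistent cusps, the sum $\sum_P$ on the right would run only over the $m$ \emph{new} cusps of $M_\infty$. But the Corollary, read together with the spectral expansion (\ref{heatkernelexpansionnoncompact}) and Lemma \ref{specconvlemma}(b), has $\sum_P$ running over \emph{all} cusps of $M_\infty$. So your extra step proves a different statement from the one asserted. The paper does no such cancellation: it simply passes from $\mathcal{I}_{M_q}\to\mathcal{I}_{M_\infty}$ and reads off both sides via Lemma \ref{specconvlemma}(b). That makes sense if either (i) the left-hand side of the Corollary is understood to carry the Eisenstein term for the $p$ existing cusps of $M_q$ as well (so that the displayed identity is just $\mathcal{I}_{M_q}\to\mathcal{I}_{M_\infty}$ written out), or (ii) ``non-compact'' is a slip for ``compact'', consistent with Corollary \ref{smalleigenfunctions} immediately below. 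In either reading, your cancellation step is unnecessary and alters the conclusion; the argument you call ``the main technical obstacle'' is not part of the intended proof.
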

\begin{proof}
This follows directly from Lemma \ref{specconvlemma} with $f(t)=t$.
\end{proof}

\hskip 0.2in We would like to apply Theorem
\ref{specconvinverselaplace} with $f(t)=1$. However,
$\mathscr{L}(f)(z) = 1/z$ is not $L^1$, so we cannot apply Theorem
\ref{specconvinverselaplace} directly. In order to make the
statement, we need to set some notation. So let $w > 0$ and define the following cumulative distribution for a
hyperbolic Riemann surface $M$
\begin{align*}
C_{M,w}(x,T) = \frac{1}{2\pi i} \int_{t-i\infty}^{t+i \infty}
K_M(z,x,x)e^{Tz}\frac{dz}{z^{w+1}}.
\end{align*}
Note that, using part (a) of Proposition \ref{kerneluniformbounds}, we easily obtain that the above integral converges whenever $w>0$.
Furthermore, if $M$ is compact, then
\begin{align*}
C_{M,w}(x,T) = \sum_{\lambda_{M,n}\le
T}(T-\lambda_{M,n})^{w}\phi_{M,n}(x)^2,
\end{align*}
and if $M$ is non-compact
\begin{align*}
C_{M,w}(x,T) =& \sum_{\lambda_{M,n}\le
T}(T-\lambda_{M,n})^{w}\phi_{M,n}(x)^2 \\
&+\frac{1}{2\pi} \sum_{P}\int_{0}^{\sqrt{T-1/4}} (T-1/4-r^2)^w
E_{\mathrm{par},M,P}(1/2+ir,x)^2dr,
\end{align*}
where the integral is zero if $T<1/4$.

\begin{thm}\label{specconvcountingdistribution}
Let $M_{q}$ be a degenerating family of hyperbolic Riemann
surfaces with limiting surface $M_{\infty}$. Let $x$ be a point
which is bounded away from the developing cusps. Then, if $0\le T <1/4$ is not
an eigenvalue of $M_{\infty}$, we have
\begin{align*}
\lim_{q \to \infty} C_{M_{q},0}(x,T) =
C_{M_{\infty},0}(x,T).
\end{align*}
\end{thm}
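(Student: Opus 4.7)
The plan is to deduce the statement from the Corollary that follows Lemma~\ref{specconvlemma} by a convexity (Tauberian) argument: pointwise convergence of a sequence of convex functions forces convergence of one-sided derivatives at any point where the limit is differentiable. First, apply that Corollary with $y=x$ and observe that for $T<1/4$ the upper endpoint $\sqrt{T-1/4}$ of the Eisenstein integral is imaginary, so the integral vanishes; this gives the pointwise limit
\begin{align*}
\lim_{q\to\infty} C_{M_{q},1}(x,T) \;=\; C_{M_{\infty},1}(x,T), \qquad T\in[0,1/4),
\end{align*}
where $C_{M,1}(x,T)=\sum_{\lambda_{M,n}\le T}(T-\lambda_{M,n})\phi_{M,n}(x)^{2}$. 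In particular the limit holds at the fixed $T$ of the theorem and at $T\pm h$ for all sufficiently small $h>0$.

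For each surface $M$ in the family, the function $T\mapsto C_{M,1}(x,T)$ is continuous, piecewise linear, and non-decreasing on $[0,1/4)$, with slope jumping upward by $\phi_{M,n}(x)^{2}\ge 0$ as $T$ crosses each eigenvalue $\lambda_{M,n}$. Hence $C_{M,1}(x,\cdot)$ is convex; its right-derivative at any $T$ is $\sum_{\lambda_{M,n}\le T}\phi_{M,n}(x)^{2}=C_{M,0}(x,T)$, and its left-derivative is $\sum_{\lambda_{M,n}<T}\phi_{M,n}(x)^{2}$. Since $T$ is not an eigenvalue of $M_{\infty}$, the two one-sided derivatives of $C_{M_{\infty},1}(x,\cdot)$ coincide at $T$, so $C_{M_{\infty},1}(x,\cdot)$ is differentiable there with derivative $C_{M_{\infty},0}(x,T)$.

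Now invoke the standard convex-function sandwich
\begin{align*}
\frac{C_{M_{q},1}(x,T)-C_{M_{q},1}(x,T-h)}{h}
\;\le\; C_{M_{q},0}(x,T)
\;\le\; \frac{C_{M_{q},1}(x,T+h)-C_{M_{q},1}(x,T)}{h},
\end{align*}
valid for every $h\in(0,\min(T,\,1/4-T))$. Taking $\liminf_{q}$ on the left and $\limsup_{q}$ on the right, the pointwise convergence of $C_{M_{q},1}$ replaces each $M_{q}$-quantity by the corresponding $M_{\infty}$-quantity. Letting $h\to 0^{+}$ and using differentiability of $C_{M_{\infty},1}(x,\cdot)$ at $T$, both outer expressions tend to $C_{M_{\infty},0}(x,T)$, forcing $\lim_{q\to\infty}C_{M_{q},0}(x,T)=C_{M_{\infty},0}(x,T)$.

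The main obstacle is to verify the convex-function machinery carefully and to justify the interchange of the two limits ($q\to\infty$ first, then $h\to 0^{+}$); this is legitimate because the pointwise convergence of the previous paragraph holds at every $T\in[0,1/4)$, in particular at $T\pm h$. The hypothesis $T<1/4$ is essential: it eliminates the continuous-spectrum contribution and permits the description of $C_{M,1}(x,\cdot)$ as a piecewise-linear sum over small eigenvalues whose right-derivative at $T$ is exactly $C_{M,0}(x,T)$. For $T\ge 1/4$, a smooth-in-$T$ Eisenstein integral would appear in $C_{M,1}(x,\cdot)$ and a considerably more delicate differentiation argument would be needed.
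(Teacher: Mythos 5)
Your proposal is correct and follows essentially the same route as the paper: first the pointwise convergence of the weight-one distribution $C_{M_{q},1}(x,T)$ (obtained from Theorem \ref{specconvinverselaplace} with $f(t)=t$, the continuous-spectrum term vanishing for $T<1/4$), then a difference-quotient argument to descend to weight zero. The paper merely cites ``the mean value theorem as in \cite{JoLu 97a}'' for this second step; your convexity sandwich, together with the observation that $T$ not being an eigenvalue of $M_{\infty}$ makes $C_{M_{\infty},1}(x,\cdot)$ differentiable at $T$ with derivative $C_{M_{\infty},0}(x,T)$, is exactly that argument written out in full.
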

\begin{proof}
From Theorem \ref{specconvinverselaplace}, we have that for weight $w=1$, the following limit holds
\begin{align*}
\lim_{q \to \infty} C_{M_{q},1}(x,T) =
C_{M_{\infty},1}(x,T)\,\,.
\end{align*}
To show the above limit holds for weight $w=0$, we use the mean value theorem as in \cite{JoLu 97a}.
\end{proof}

\hskip 0.2in We end this section with the following corollary which
states the convergence of the small eigenfunctions.
\begin{cor}\label{smalleigenfunctions}
Let $M_{q}$ be a family of elliptically degenerating compact
hyperbolic Riemann surfaces which converges to $M_{\infty}$. Suppose
that $0\le T < 1/4 $ is not an eigenvalue of the limiting surface.
Then, for any point $x$ which is bounded away from the developing
cusps, we have
\begin{align*}
\lim_{q \to \infty} \sum_{\lambda_{M_{q},n}\le T}
\phi_{M_{q},n}(x)^2 = \sum_{\lambda_{M_{\infty},n}\le T}
\phi_{M_{\infty},n}(x)^2.
\end{align*}
In particular, if the eigenspace associated to the eigenvalue
$\lambda_{M_{\infty},n}$ is one-dimensional, then
\begin{align*}
\lim_{q \to \infty} \phi_{M_{q},n}(x) =
\phi_{M_{\infty},n}(x).
\end{align*}
The convergence is uniform on compact subsets of $M_{\infty}$.
\end{cor}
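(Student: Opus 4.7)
The plan is to deduce both statements from Theorem \ref{specconvcountingdistribution} applied with weight $w = 0$. Since each $M_q$ is compact, the spectral formula for $C_{M_q,0}(x,T)$ reduces to
$$C_{M_q,0}(x,T) = \sum_{\lambda_{M_q,n} \le T} \phi_{M_q,n}(x)^2.$$
For the limiting (possibly non-compact) surface $M_\infty$, the Eisenstein contribution to $C_{M_\infty,0}(x,T)$ is an integral over $[0,\sqrt{T-1/4}\,]$, which is empty because $T < 1/4$. Hence $C_{M_\infty,0}(x,T) = \sum_{\lambda_{M_\infty,n} \le T}\phi_{M_\infty,n}(x)^2$, and Theorem \ref{specconvcountingdistribution} delivers the first assertion, with the uniformity on compacta inherited from Theorem \ref{specconvinverselaplace}.

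For the second assertion, suppose $\lambda_{M_\infty,n_0} < 1/4$ has a one-dimensional eigenspace. I would pick $T_1 < \lambda_{M_\infty,n_0} < T_2 < 1/4$ close enough that $(T_1,T_2)$ meets $\mathrm{spec}(M_\infty)$ only at $\lambda_{M_\infty,n_0}$, and such that neither $T_j$ is an eigenvalue of $M_\infty$. Applying the first claim at $T_1$ and $T_2$ and subtracting gives
$$\lim_{q \to \infty} \sum_{T_1 < \lambda_{M_q,n} \le T_2} \phi_{M_q,n}(x)^2 \;=\; \phi_{M_\infty,n_0}(x)^2,$$
uniformly on compact subsets of $M_\infty$ away from the developing cusps. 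A parallel application of Theorem \ref{specconvcountingdistribution} with weight $w = 1$ (via Theorem \ref{specconvinverselaplace} and Lemma \ref{specconvlemma}) controls the $(T - \lambda)$-weighted sum, and combining the two identities for nearby values of $T$ forces the number of $M_q$-eigenvalues in $(T_1,T_2)$ to stabilize at one for all $q$ large, with the unique such eigenvalue $\lambda_{M_q,n(q)}$ converging to $\lambda_{M_\infty,n_0}$.

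Consequently $\phi_{M_q,n(q)}(x)^2 \to \phi_{M_\infty,n_0}(x)^2$ uniformly on compact subsets of $M_\infty$ bounded away from the developing cusps. To promote this to convergence of $\phi_{M_q,n(q)}$ itself, I would fix a sign convention, e.g.\ choose the sign of $\phi_{M_q,n(q)}$ so that $\int_K \phi_{M_q,n(q)}\,\phi_{M_\infty,n_0}\,d\mu_{M_\infty} \ge 0$ for a fixed compact $K \subset M_\infty$ on which $\phi_{M_\infty,n_0}$ is not identically zero; the uniform convergence of the squares together with uniform bounds on the $\phi_{M_q,n(q)}$ (from standard elliptic regularity applied to the heat kernel representation) then ensures $\phi_{M_q,n(q)} \to \phi_{M_\infty,n_0}$ uniformly on compacta. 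The main obstacle I anticipate is this final sign/identification step: converting pointwise convergence of squares into pointwise convergence of the eigenfunctions themselves requires both the simplicity hypothesis on the limiting eigenvalue and a careful normalization that is stable as $q \to \infty$.
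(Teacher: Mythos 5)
Your proposal is correct and follows the same route as the paper: the paper's entire proof of Corollary \ref{smalleigenfunctions} is the single sentence that it ``follows directly from Theorem \ref{specconvcountingdistribution},'' which is exactly your first paragraph (weight $w=0$, Eisenstein term absent since $T<1/4$). Where you go beyond the paper is the second assertion: the paper offers no argument for passing from convergence of $\phi_{M_{q},n}(x)^2$ to convergence of $\phi_{M_{q},n}(x)$ itself, and you correctly identify that this requires localizing the eigenvalue between two non-eigenvalue thresholds $T_1<T_2$ and, crucially, fixing a sign normalization stable in $q$ (the squares determine each eigenfunction only up to sign, and a pointwise square root could flip sign across the nodal set without some equicontinuity or a global normalization such as your integral condition over a fixed compact $K$). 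Your sketch of that step is sound --- uniform convergence of the squares plus local elliptic bounds on the eigenfunctions give precompactness in $C^0_{\mathrm{loc}}$, and the simplicity hypothesis plus the sign convention identify the limit --- and it supplies a detail the paper's one-line proof implicitly assumes.
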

\begin{proof}
This follows directly from Theorem \ref{specconvcountingdistribution}. 
\end{proof}

\section{Uniform long time asymptotics}
\label{Uniform long time asymptotics}

\hskip 0.2in One of the results of the previous section, Corollary \ref{smalltasymptotics}, presents the behavior of the regularized trace for small values of the time variable. In this section, we continue the investigation of the regularized trace for large values of the time variable. For all surfaces under our consideration, the spectrum of the
Laplacian is discrete below 1/4. For such a hyperbolic surface $M$, we
denote the eigenvalues in this range by $\{\lambda_{M,n}\}$ and the
corresponding normalized eigenfunction by $\{\phi_{M,n}\}$. The main
result of this section is Theorem \ref{uniformlongtimeasymptotics}. Before we get to it, we need one definition and some ancillary lemmas.

\begin{defn}\label{alphatruncatedtrace}
Let $M_{q}$ be an elliptically degenerating family of compact
or non-compact hyperbolic Riemann surfaces of finite volume which
converge to the non-compact hyperbolic surface $M_{\infty}$. Let
$0\le \alpha  < 1/4$ be such that $\alpha$ is not an eigenvalue of
$M_{\infty}$. We defined the $\alpha$-truncated hyperbolic and elliptic trace by
\begin{align*}
\text{\rm HTr}K_{M_{q}}^{(\alpha)}(t) + \text{\rm ETr}K_{M_{q}}^{(\alpha)}(t) =
\text{\rm HTr}K_{M_{q}}(t) + \text{\rm ETr}K_{M_{q}}(t) -
\sum_{\lambda_{q,n}\le \alpha} e^{-\lambda_{q,n}t}.
\end{align*}
\end{defn}

\hskip 0.2in As in the course of the proof for the behavior of the trace for small values of the time parameter, the investigation of the long time asymptotic of the regularized trace is based on analyzing the three  three integrals as in Theorem \ref{pointwiseanduniformconvergence}. For integrals
(I) and (III) over $M_{q}$ and $C_{q}$ away from
developing cusps, we need the following lemma.

\begin{lemma}\label{truncatedkernellimitlemma}
Let $R_{q}$ denote either $M_{q}$ or $C_{q}$, that
is, either a degenerating hyperbolic surface of finite volume or a
degenerating hyperbolic cone of infinite volume. For $\alpha < 1/4$
and $c < \alpha$, the limit
\begin{align*}
\lim_{q \to \infty} e^{ct} K_{R_{q}}^{(\alpha)}(t,x,x)
= e^{ct} K_{R_{\infty}}^{(\alpha)}(t,x,x)
\end{align*}
is uniform for $x \in R_{q}\backslash C_{q,\varepsilon}$
and $t > 0$.
\end{lemma}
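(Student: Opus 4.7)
The plan is to prove the lemma by spectral decomposition combined with a three-regime analysis of the time variable. In the compact case, write
\begin{equation*}
e^{ct}K_{R_{q}}^{(\alpha)}(t,x,x) = \sum_{\lambda_{q,n} > \alpha} e^{(c-\lambda_{q,n})t}\phi_{q,n}(x)^2,
\end{equation*}
with the analogous formula involving the parabolic Eisenstein series in the non-compact case; equivalently, $K_{R_{q}}^{(\alpha)}(t,x,x) = K_{R_{q}}(t,x,x) - \sum_{\lambda_{q,n}\le\alpha} e^{-\lambda_{q,n}t}\phi_{q,n}(x)^2$. I would then establish uniform convergence separately on the ranges $t\in(0,t_0]$, $t\in[t_0,T]$, and $t\in[T,\infty)$, with $t_0$ small and $T$ large to be chosen at the end.

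On the compact interval $[t_0,T]$, Theorem \ref{convergenceheatkernels} part (a) delivers uniform convergence of $K_{R_{q}}(t,x,x)$ to $K_{R_{\infty}}(t,x,x)$ for $x\in R_{q}\setminus C_{q,\varepsilon}$, while Corollary \ref{smalleigenfunctions} supplies the uniform convergence of the finitely many eigenfunctions $\phi_{q,n}$ and their eigenvalues $\lambda_{q,n}\le\alpha$; here one uses that $\alpha<1/4$ lies strictly below the continuous spectrum (replacing $\alpha$ by a nearby non-eigenvalue of $M_{\infty}$ if necessary). For $t\ge T$, the assumption $c<\alpha$ yields exponential decay: splitting $(c-\lambda_n)t = (c-\lambda_n)t/2 + (c-\lambda_n)t/2$ and using $\lambda_n>\alpha$ in the first factor gives
\begin{equation*}
e^{ct}K_{R_{q}}^{(\alpha)}(t,x,x) \;\le\; e^{-(\alpha-c)t/2}\, e^{ct/2}\, K_{R_{q}}(t/2,x,x) \;\le\; C\, e^{-(\alpha-c)t/2},
\end{equation*}
uniformly in $q$ and $x$ by Proposition \ref{kerneluniformbounds}; an analogous estimate controls the continuous-spectrum contribution. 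Taking $T$ large thus makes both $e^{ct}K_{R_{q}}^{(\alpha)}$ and $e^{ct}K_{R_{\infty}}^{(\alpha)}$ smaller than any prescribed threshold.

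The main obstacle lies in the short-time regime $t\in(0,t_0]$, because the truncated kernel inherits the $K_{\mathbb H}(t,0)\sim 1/(4\pi t)$ singularity through the periodization (\ref{periodization}). The idea is to extract this singular contribution — which is identical for $R_{q}$ and $R_{\infty}$ and therefore cancels in the difference. Writing
\begin{equation*}
K_{R_{q}}(t,x,x) - K_{\mathbb H}(t,0) = \sum_{\gamma\in\Gamma_{q}\setminus\{\mathrm{id}\}} K_{\mathbb H}(t, d(\tilde{x},\gamma \tilde{x})),
\end{equation*}
the hypothesis $x\in R_{q}\setminus C_{q,\varepsilon}$ forces the injectivity radius at $x$ to be bounded below by some $d_0>0$ uniformly in $q$, so every displacement $d(\tilde{x},\gamma \tilde{x})\ge d_0$ is strictly off-diagonal. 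Combining the Gaussian decay of $K_{\mathbb H}(t,d)$ for $d\ge d_0$ with the lattice-point counting bound (Lemma 4 of \cite{JoLu 95}) shows that this sum is bounded uniformly in $q$, $x$, and $t\in(0,t_0]$, and in fact vanishes as $t\to 0^+$. Theorem \ref{convergenceheatkernels} part (b), which applies for $\mathrm{Re}(z)\ge 0$ away from the diagonal, then gives uniform convergence of the off-diagonal summand to its $R_{\infty}$ counterpart as $q\to\infty$, while the small-eigenfunction correction is again handled by Corollary \ref{smalleigenfunctions}. Combining the three regimes and choosing $t_0$ small, $T$ large, and $q$ large in that order yields the asserted uniform limit.
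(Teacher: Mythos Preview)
Your three-regime decomposition in $t$ is a reasonable alternative to the paper's argument, but the large-time estimate has a gap. You claim
\[
e^{ct}K_{R_{q}}^{(\alpha)}(t,x,x)\;\le\;e^{-(\alpha-c)t/2}\, e^{ct/2}\, K_{R_{q}}(t/2,x,x)\;\le\;C\, e^{-(\alpha-c)t/2},
\]
but the second inequality requires $e^{ct/2}K_{R_q}(t/2,x,x)\le C$ uniformly for $t\ge T$, and this fails for $0<c<\alpha$: the full heat kernel $K_{R_q}(t/2,x,x)$ does not decay (in the compact case it tends to $1/\mathrm{vol}(M_q)>0$), so the growing factor $e^{ct/2}$ is not absorbed. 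The repair is to split at a \emph{fixed} time $t_0$ rather than at $t/2$: for $\lambda_n>\alpha$ and $t\ge t_0$ one has $e^{-\lambda_n t}\le e^{-\lambda_n t_0}e^{-\alpha(t-t_0)}$, whence $e^{ct}K_{R_q}^{(\alpha)}(t,x,x)\le e^{\alpha t_0}K_{R_q}(t_0,x,x)\,e^{(c-\alpha)t}$, and Proposition~\ref{kerneluniformbounds} applied at the single time $t_0$ supplies the uniform constant.

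Two further points about the short-time regime. First, the claim that the injectivity radius at $x\in R_q\setminus C_{q,\varepsilon}$ is bounded below uniformly in $q$ fails when $M_q$ is non-compact: the pre-existing cusp neighbourhoods lie in $M_q\setminus C_{q,\varepsilon}$ and contain points of arbitrarily small injectivity radius. Second, your appeal to Theorem~\ref{convergenceheatkernels}(b) for the ``off-diagonal summand'' conflates the off-diagonal heat kernel $K_{R_q}(z,x_1,x_2)$ with the non-identity terms of the periodization; that theorem does not speak to the latter. The paper's proof avoids both issues by abandoning the time-splitting altogether and working directly with the Stieltjes representation $K_{R_q}(t,x,x)=\int_0^\infty K_{\mathbb H}(t,\rho)\,dN_{\Gamma_q}(x;\rho)$: the convergence of the hyperbolic metrics away from the developing cusps forces the counting functions $N_{\Gamma_q}(x;\cdot)$ to converge, and the uniform bounds on $K_{\mathbb H}$ together with the counting-function bounds of Lemma~4 in \cite{JoLu 95} (which remain valid in cusp neighbourhoods) allow one to pass to the limit for all $t>0$ at once; the $\alpha$-truncation is then handled by the small-eigenvalue convergence just as you propose.
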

\begin{proof}
To prove this lemma, we first realize the heat kernel on $M_{q}$ as a Stieltjes integral of the periodized heat kernel in the upper-plane against geodesic counting functions. By using bounds on the heat kernel in the upper-half plane and on the counting functions together with the uniform convergence of the hyperbolic metrics away from the developing cusps as well as the convergence of the small eigenvalues, the result then follows. For more details, see the proof of Lemma 3.2 of \cite{JoLu 97b}.
\end{proof}

\begin{lemma}\label{truncatedkernelsupbound}
Let $M_{q}$ denote an elliptically degenerating family of
compact or non-compact hyperbolic Riemann surfaces of finite volume
which converge to the non-compact hyperbolic surface $M_{\infty}$.
Let $c < \alpha < 1/4$ and $\varepsilon < 1/2$. Then there is a
constant $C$ such that for all $t > 0$, we have
\begin{align*}
\sup_{\substack{ q \\ x \in \partial C_{q,\varepsilon}
\\ y \in \partial C_{q,\varepsilon} }}
|K_{M_{q}}^{(\alpha)}(t,x,y) -
K_{C_{q}}^{(\alpha)}(t,x,y)| \le C e^{-ct}.
\end{align*}
\begin{proof}
Using two applications of the maximum principle (as in \ref{maxprinciple1} and \ref{maxprinciple2}) gives us the following bound
\begin{align}\label{maxprinciple3}
\sup_{\substack{  x \in \partial C_{q,\varepsilon}
\\ y \in \partial C_{q,\varepsilon} }}
|K_{M_{q}}(t,x,y) -
K_{C_{q}}(t,x,y)| \le C .
\end{align}
\hskip 0.2in We also remark that 1/4 is a lower bound for the bottom of the spectrum for both the infinite volume cylinder $C_{\infty}$ and the infinite volume cone $C_{q}$. Consequently, since $\alpha < 1/4$, it follows that $K_{C_{\infty}}^{(\alpha)}(t,x,y) = K_{C_{\infty}}(t,x,y)$ and $K_{C_{q}}^{(\alpha)}(t,x,y) = K_{C_{q}}(t,x,y)$. The triangle inequality allows us to write
\begin{align*}
e^{ct}|K_{M_{q}}^{(\alpha)}(t,x,y) -
K_{C_{q}}^{(\alpha)}(t,x,y)| \le &
\,\, |e^{ct}K_{M_{q}}^{(\alpha)}(t,x,y) -
e^{ct}K_{M_{\infty}}^{(\alpha)}(t,x,y)|\\
&+|e^{ct}K_{C_{q}}^{(\alpha)}(t,x,y) -
e^{ct}K_{C_{\infty}}^{(\alpha)}(t,x,y)|\\
&+|e^{ct}K_{M_{\infty}}^{(\alpha)}(t,x,y) -
e^{ct}K_{C_{\infty}}^{(\alpha)}(t,x,y)|\\
\le &\,\, \varepsilon + |e^{ct}K_{M_{\infty}}^{(\alpha)}(t,x,y) -
e^{ct}K_{C_{\infty}}(t,x,y)|
\end{align*}
where the last inequality follows from Lemma \ref{truncatedkernellimitlemma} above.
Another application of the triangle inequality gives the further bound
\begin{align*}
\varepsilon + e^{ct}|K_{M_{\infty}}(t,x,y) -
K_{C_{\infty}}(t,x,y)| + e^{ct} \sum_{\lambda_n < \alpha} e^{-\lambda_n t},
\end{align*}
where the collection $\{\lambda_n\}$ is the finite set of eigenvalues of $M_{\infty}$ in the range $[0,\alpha)$
Now apply the sup over $x,y \in \partial C_{q,\varepsilon}$ and use the bound as in (\ref{maxprinciple3}) to complete the proof.
\end{proof}
\end{lemma}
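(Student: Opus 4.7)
The plan is to reduce matters to the fixed limit surface $M_{\infty}$ via Lemma \ref{truncatedkernellimitlemma}, and then control the resulting quantity on $M_{\infty}$ by combining the maximum principle for bounded times with spectral decay for large times. A first observation is that the infinite volume cones $C_{q}$ and the cusp $C_{\infty}$ each have spectrum contained in $[1/4,\infty)$, which lies above $\alpha$; consequently the $\alpha$-truncation removes nothing on these spaces, and $K_{C_{q}}^{(\alpha)}=K_{C_{q}}$, $K_{C_{\infty}}^{(\alpha)}=K_{C_{\infty}}$. What remains is therefore to estimate $\sup_{q,\,x,y\in\partial C_{q,\varepsilon}}e^{ct}|K_{M_{q}}^{(\alpha)}(t,x,y)-K_{C_{q}}(t,x,y)|$.

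Applying the three-way triangle inequality
\begin{align*}
e^{ct}|K_{M_{q}}^{(\alpha)}-K_{C_{q}}|\le e^{ct}|K_{M_{q}}^{(\alpha)}-K_{M_{\infty}}^{(\alpha)}|+e^{ct}|K_{C_{q}}-K_{C_{\infty}}|+e^{ct}|K_{M_{\infty}}^{(\alpha)}-K_{C_{\infty}}|,
\end{align*}
and invoking Lemma \ref{truncatedkernellimitlemma} with $R_{q}$ taken to be $M_{q}$ and then $C_{q}$, I would make the first two summands arbitrarily small uniformly in $t>0$ and in $x,y\in\partial C_{q,\varepsilon}$ once $q$ is large. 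It then suffices to establish the claimed bound with $M_{q},C_{q}$ replaced by $M_{\infty},C_{\infty}$.

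On this fixed limit surface I would split the analysis by time scale. For $0<t\le 1$, two applications of the maximum principle in the style of (\ref{maxprinciple1})--(\ref{maxprinciple2}) supply a uniform bound on $|K_{M_{\infty}}-K_{C_{\infty}}|$ over $\partial C_{q,\varepsilon}\times\partial C_{q,\varepsilon}$, while the $\alpha$-truncation subtracts only the finite sum $\sum_{\lambda_{n}\le\alpha}e^{-\lambda_{n}t}\phi_{M_{\infty},n}(x)\phi_{M_{\infty},n}(y)$, which is bounded on a compact time interval; multiplying by the bounded factor $e^{ct}$ then gives a constant upper bound in this range. For $t\ge 1$, since $\alpha<1/4$ is not an eigenvalue of $M_{\infty}$, there is a strict spectral gap $\alpha<\alpha^{*}$, so the spectral expansion of $K_{M_{\infty}}^{(\alpha)}$ (retaining only eigenvalues strictly above $\alpha$ together with the continuous spectrum on $[1/4,\infty)$) decays at least as $e^{-\alpha^{*}t}$, and $K_{C_{\infty}}$ decays as $e^{-t/4}$. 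Since $c<\alpha<\alpha^{*}$ and $c<1/4$, the prefactor $e^{ct}$ is dominated and the difference stays uniformly bounded for $t\ge 1$.

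The main obstacle is juggling the two kinds of uniformity simultaneously: across the family parameter $q$ and across the full time range $t>0$. The technical device that makes this feasible is Lemma \ref{truncatedkernellimitlemma}, which uniformizes the convergence $K_{R_{q}}^{(\alpha)}\to K_{R_{\infty}}^{(\alpha)}$ for all $t>0$ rather than only on compact intervals; once that is granted, the remaining spectral-gap issue lives entirely on the fixed surface $M_{\infty}$, where the hypothesis that $\alpha$ is not an eigenvalue of $M_{\infty}$ provides the strict inequality $c<\alpha<\alpha^{*}$ on which the large-time decay turns.
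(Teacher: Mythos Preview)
Your proposal is correct and follows essentially the same route as the paper: observe that the cone kernels are already $\alpha$-truncated, apply the three-way triangle inequality, and use Lemma \ref{truncatedkernellimitlemma} to reduce to bounding $e^{ct}|K_{M_{\infty}}^{(\alpha)}-K_{C_{\infty}}|$ on the fixed limit surface. The only difference is in the last step: the paper untruncates once more via the triangle inequality and invokes the maximum-principle bound (\ref{maxprinciple3}), whereas you split into $t\le 1$ and $t\ge 1$ and use the spectral gap $c<\alpha<\alpha^{*}$ directly---your treatment here is in fact more explicit about why the bound remains uniform for large $t$.
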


\begin{lemma}\label{fl2norminequality}
Let $f(t,x)$ be a solution to the Dirichlet heat problem on the
finite cone $C_{q,\varepsilon}$. For fixed $t >0$ let
$\parallel f(t, \cdot)\parallel_{C_{q,\varepsilon},2}$ denote
the $L^2$-norm of $f(t,\cdot)$ as a function on
$C_{q,\varepsilon}$. Then for all $t_0,t > 0$, we have
\begin{align*}
\parallel f(t_0+t, \cdot)\parallel_{2;C_{q,\varepsilon}} \le
\parallel f(t_0, \cdot)\parallel_{2;C_{q,\varepsilon}}
e^{-t/4}.
\end{align*}
\end{lemma}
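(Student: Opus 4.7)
\emph{Proof plan.} The strategy is a standard spectral-gap energy estimate. Since $f$ vanishes on $\partial C_{q,\varepsilon}$ and satisfies the heat equation $\partial_t f = -\Delta_x f$, applying Green's theorem to $\parallel f(t,\cdot)\parallel_{2;C_{q,\varepsilon}}^{2}$ (treating real and imaginary parts of $f$ separately if it is complex-valued) yields the energy identity
\begin{align*}
\frac{d}{dt}\parallel f(t,\cdot)\parallel_{2;C_{q,\varepsilon}}^{2} = -2\int_{C_{q,\varepsilon}}|\nabla f|^{2}\,d\mu,
\end{align*}
with no boundary contribution on account of the Dirichlet condition.

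Next I would invoke the Poincar\'e-type inequality $\int_{C_{q,\varepsilon}}|\nabla f|^{2}\,d\mu \ge \tfrac{1}{4}\int_{C_{q,\varepsilon}}|f|^{2}\,d\mu$ for $f \in H^{1}_{0}(C_{q,\varepsilon})$, which combined with the energy identity gives
\begin{align*}
\frac{d}{dt}\parallel f(t,\cdot)\parallel_{2;C_{q,\varepsilon}}^{2} \le -\tfrac{1}{2}\parallel f(t,\cdot)\parallel_{2;C_{q,\varepsilon}}^{2},
\end{align*}
so that Gr\"onwall's inequality on $[t_{0},t_{0}+t]$ produces $\parallel f(t_{0}+t,\cdot)\parallel_{2;C_{q,\varepsilon}}^{2} \le e^{-t/2}\parallel f(t_{0},\cdot)\parallel_{2;C_{q,\varepsilon}}^{2}$, and taking a square root finishes the argument.

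The one ingredient requiring justification is the spectral lower bound $\lambda_{0}^{D}(C_{q,\varepsilon}) \ge 1/4$. The proof of Lemma \ref{truncatedkernelsupbound} already invokes the fact that the bottom of the $L^{2}$-spectrum of $\Delta_{C_{q}}$ on the infinite hyperbolic cone equals $1/4$, and domain monotonicity of Dirichlet eigenvalues under the inclusion $C_{q,\varepsilon} \subset C_{q}$ supplies the desired inequality: any $g \in H_{0}^{1}(C_{q,\varepsilon})$ extends by zero to an element of the form domain on $C_{q}$ with the same $L^{2}$-norm and Dirichlet energy, so the Rayleigh quotient on $C_{q,\varepsilon}$ is bounded below by that on $C_{q}$, which in turn is at least $1/4$. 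The only minor subtlety here is the conical apex of $C_{q}$, which is handled by interpreting the Laplacian via the Friedrichs extension recalled in Section \ref{Regularized heat traces}; I expect this to pose no real obstacle, and so overall the proof should be quite short.
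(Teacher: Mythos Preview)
Your proposal is correct and follows essentially the same route as the paper: compute $\partial_{t}\|f\|_{2}^{2}=-2\int\langle\mathrm{grad}\,f,\mathrm{grad}\,f\rangle\,d\mu$ via Green's theorem and the Dirichlet condition, bound the Rayleigh quotient below by $1/4$ using that the bottom of the spectrum of $C_{q}$ is $1/4$, and integrate the resulting differential inequality. Your explicit mention of domain monotonicity for the Dirichlet spectrum under $C_{q,\varepsilon}\subset C_{q}$ is a bit more careful than the paper, which simply asserts the $1/4$ bound without that step.
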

\begin{proof}
From the definitions, we have that
\begin{align*}
\partial_t \parallel f(t_0+t,
\cdot)\parallel_{2;C_{q,\varepsilon}}^2
=\int_{C_{q,\varepsilon}} 2ff_t d\mu =\int_{C_{q,\varepsilon}}
2f\Delta f d\mu = -2\int_{C_{q,\varepsilon}} \langle \text{grad} f, \text{grad}f\rangle d\mu
\end{align*}
where the last inequality follows from the Green's theorem as applied to functions that vanish on the boundary of the domain of integration.
Therefore,
\begin{align*}
\partial_t \parallel f(t_0+t,
\cdot)\parallel_{2;C_{q,\varepsilon}}^2
&=\frac{-2\int_{C_{q,\varepsilon}} \langle\text{grad} f, \text{grad}f\rangle d\mu}{\parallel
f(t_0+t, \cdot)\parallel_{2;C_{q,\varepsilon}}^2} \parallel
f(t_0+t, \cdot)\parallel_{2;C_{q,\varepsilon}}^2\\
& = -2 \lambda_{f} \parallel
f(t_0+t, \cdot)\parallel_{2;C_{q,\varepsilon}}^2\\
&\le -\frac{1}{2} \parallel f(t_0+t,
\cdot)\parallel_{2;C_{q,\varepsilon}}^2.
\end{align*}
The last inequality follows from the fact that $\lambda=1/4$ is a
lower bound for the bottom of the spectrum for $C_{q}$. The
result follows by integration.
\end{proof}

\begin{lemma}\label{poissonkernellongtimebound}
For any $\varepsilon < \delta$, there exists a constant $C$ such
that
\begin{align*}
\parallel P_{q,
\delta}(t,\zeta,\cdot)\parallel_{C_{q,\varepsilon},2} \le C
e^{-t/4}.
\end{align*}
\end{lemma}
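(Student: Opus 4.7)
The plan is to bound the $L^2$-norm on $C_{q,\varepsilon}$ by the $L^2$-norm on the larger set $C_{q,\delta}$, and then exploit the fact that the Poisson kernel, viewed as a function of its first spatial variable, is a Dirichlet heat solution on $C_{q,\delta}$; this lets me apply the exponential decay mechanism of Lemma \ref{fl2norminequality}, with the uniform initial data controlled by Proposition \ref{poissonkernelproperties}.

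Concretely, since $\varepsilon<\delta$ implies $C_{q,\varepsilon}\subset C_{q,\delta}$, I start with the trivial inclusion estimate
$$\|P_{q,\delta}(t,\cdot,\zeta)\|_{2;C_{q,\varepsilon}}\le \|P_{q,\delta}(t,\cdot,\zeta)\|_{2;C_{q,\delta}}.$$
Next, as $\partial_{n,\zeta}$ acts on a variable disjoint from $(t,x)$, the function $x\mapsto P_{q,\delta}(t,x,\zeta)$ solves the heat equation on $C_{q,\delta}$, and it inherits the vanishing of $K^{D}_{C_{q,\delta}}$ on $\partial C_{q,\delta}$ in the first spatial argument. Hence Lemma \ref{fl2norminequality}, applied with $C_{q,\delta}$ in place of $C_{q,\varepsilon}$ (the proof uses only Green's identity and the lower bound $1/4$ for the bottom of the spectrum of $C_{q}$, which by domain monotonicity majorizes any Dirichlet spectrum on subdomains of $C_q$), yields for each fixed $t_0>0$ and $t\ge t_0$
$$\|P_{q,\delta}(t,\cdot,\zeta)\|_{2;C_{q,\delta}}\le \|P_{q,\delta}(t_0,\cdot,\zeta)\|_{2;C_{q,\delta}}\,e^{-(t-t_0)/4}.$$

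I then choose $t_0$ to be some fixed positive number, say $t_0=1$, and use Proposition \ref{poissonkernelproperties}(b) to bound the initial datum:
$$\|P_{q,\delta}(t_0,\cdot,\zeta)\|_{2;C_{q,\delta}}\le \Bigl(\sup_{x\in C_{q,\delta}} P_{q,\delta}(t_0,x,\zeta)\Bigr)\sqrt{\mathrm{vol}(C_{q,\delta})}\le M\sqrt{\delta},$$
with $M$ independent of $q$. Combining gives $\|P_{q,\delta}(t,\cdot,\zeta)\|_{2;C_{q,\varepsilon}}\le M\sqrt{\delta}\,e^{t_0/4}\,e^{-t/4}$ for $t\ge t_0$. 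For $0<t\le t_0$, Proposition \ref{poissonkernelproperties}(a) supplies a $q$-independent pointwise bound $P_{q,\delta}(t,x,\zeta)\le M'$ on $C_{q,\varepsilon}$, so
$$\|P_{q,\delta}(t,\cdot,\zeta)\|_{2;C_{q,\varepsilon}}\le M'\sqrt{\varepsilon}\le M'\sqrt{\varepsilon}\,e^{t_0/4}\,e^{-t/4},$$
using $e^{-t/4}\ge e^{-t_0/4}$. Taking $C$ to be the maximum of the two constants completes the proof.

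The only nonroutine point is the justification that the exponential contraction of Lemma \ref{fl2norminequality} transfers from $C_{q,\varepsilon}$ to the slightly larger finite cone $C_{q,\delta}$; this is exactly where the uniformity in $q$ is delicate, but since the only spectral input is the universal bound $1/4$ for the infinite cone $C_{q}$, the Rayleigh quotient step in the proof of Lemma \ref{fl2norminequality} goes through verbatim for any $C_{q,\delta}\subset C_{q}$ with Dirichlet conditions, and the constant in the resulting bound depends only on $M$, $\delta$ and $t_0$, not on $q$.
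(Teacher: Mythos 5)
Your proof is correct and follows essentially the same route as the paper: domain inclusion from $C_{q,\varepsilon}$ to $C_{q,\delta}$, the exponential contraction of Lemma \ref{fl2norminequality} applied to the Poisson kernel as a Dirichlet heat solution on $C_{q,\delta}$, and the $q$-uniform supremum bound of Proposition \ref{poissonkernelproperties} to control the datum at time $t_0$. If anything you are slightly more careful than the paper, which takes $t_0<t$ without fixing it and leaves the small-$t$ regime implicit, whereas you fix $t_0$ and handle $0<t\le t_0$ separately via Proposition \ref{poissonkernelproperties}(a).
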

\begin{proof}
Choose  $t_0$ small enough such that $0<t_0<t$ and set $t' = t-t_0$. Then we can write
\begin{align*}
\parallel P_{q,
\delta}(t,\zeta,\cdot)\parallel_{2;C_{q,\varepsilon}}
\le& \parallel P_{q,
\delta}(t,\zeta,\cdot)\parallel_{2;C_{q,\delta}} =
\parallel P_{q,
\delta}(t'+t_0,\zeta,\cdot)\parallel_{2;C_{q,\delta}}\\
\le& \parallel P_{q,
\delta}(t_0,\zeta,\cdot)\parallel_{2;C_{q,\delta}} e^{-t'/4}
\end{align*}
where in the last inequality we use the Lemma \ref{fl2norminequality}. Now using
Proposition \ref{poissonkernelproperties} we get a supremum bound uniform in
$q$, namely we can bound the preceding inequality above by
\begin{align*}
\sup_{\substack{\zeta \in \partial C_{q,\delta}\\ x \in
C_{q,\varepsilon} }} P_{q, \delta}(t_0,\zeta,x) \cdot \text{vol}(C_{q,\delta})^{1/2}
e^{-(t-t_0)/4} \le c(t_0) e^{-t/4}
\end{align*}
which completes the proof.
\end{proof}

\begin{thm}\label{uniformlongtimeasymptotics}
Let $M_{q}$ be an elliptically degenerating family of compact
or non-compact hyperbolic Riemann surfaces of finite volume which
converge to the non-compact hyperbolic surface $M_{\infty}$. Let $\alpha$ be given according to  the Definition \ref{alphatruncatedtrace} above.
Then for any $c < \alpha$, there exist a constant $C$ such that the
bound
\begin{align*}
|\text{\rm HTr}K_{M_{q}}^{(\alpha)}(t) + \text{\rm ETr}K_{M_{q}}^{(\alpha)}(t) -
\text{\rm DTr}K_{M_{q}}(t)| \le C e^{-ct}
\end{align*}
holds for all $t\ge 0$ and uniformly in $q$.
\end{thm}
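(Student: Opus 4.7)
The plan is to adapt the three-integral decomposition of Lemma \ref{threeintegralslemma} to the $\alpha$-truncated trace. Using the identity $\sum_{\lambda_{q,n}\le\alpha}e^{-\lambda_{q,n}t}=\sum_{\lambda_{q,n}\le\alpha}e^{-\lambda_{q,n}t}\int_{M_{q}}\phi_{q,n}(x)^{2}\,d\mu(x)$ and splitting the integral along $M_{q}=(M_{q}\setminus C_{q,\varepsilon})\cup C_{q,\varepsilon}$, the eigenvalue sum absorbs into the heat-kernel differences in the first two pieces and produces
\begin{align*}
(\text{\rm HTr}K_{M_{q}}^{(\alpha)}+\text{\rm ETr}K_{M_{q}}^{(\alpha)}-\text{\rm DTr}K_{M_{q}})(t)=(I^{(\alpha)})+(II^{(\alpha)})+(III),
\end{align*}
where $(I^{(\alpha)})=\int_{M_{q}\setminus C_{q,\varepsilon}}(K_{M_{q}}^{(\alpha)}-K_{\mathbb H})(t,x,x)\,d\mu$, $(II^{(\alpha)})=\int_{C_{q,\varepsilon}}(K_{M_{q}}^{(\alpha)}-K_{C_{q}})(t,x,x)\,d\mu$ and $(III)=\int_{C_{q}\setminus C_{q,\varepsilon}}(K_{C_{q}}-K_{\mathbb H})(t,x,x)\,d\mu$. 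I have used that $\alpha<1/4$ lies below the bottom of the spectrum of $C_{q}$, so $K_{C_{q}}^{(\alpha)}=K_{C_{q}}$; the statement reduces to bounding each piece by $Ce^{-ct}$ uniformly in $q$.

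Integrals $(I^{(\alpha)})$ and $(III)$ fall out directly from earlier results. Fixing $c'\in(c,\alpha)$, Lemma \ref{truncatedkernellimitlemma} supplies the uniform bound $e^{c't}|K_{M_{q}}^{(\alpha)}(t,x,x)|\le C$ on $M_{q}\setminus C_{q,\varepsilon}$; the explicit formula (\ref{heatkernelonH0}) together with $c'<1/4$ gives $e^{c't}K_{\mathbb H}(t,0)\le C$; and the Euler characteristic bound (\ref{constantboundlemma3}) controls $\mathrm{vol}(M_{q}\setminus C_{q,\varepsilon})$ uniformly in $q$. These combine to give $|(I^{(\alpha)})|\le Ce^{-c't}\le Ce^{-ct}$. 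For $(III)$ I would invoke Proposition \ref{thmintegraloverdifferencecusps} with $z=t$, $\delta=\varepsilon$ and $s=0$: as $t\to\infty$ the quantity $2\eta\gamma=\gamma/(2t)$ tends to $0$, so $\zeta_{\mathbb Q}(1+2\eta\gamma)\sim 2t/\gamma$ and the upper bound is of order $\sqrt{t}\,e^{-t/4}$, which is $O(e^{-ct})$ since $c<1/4$.

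The hard part will be $(II^{(\alpha)})$, where the truncated difference $D^{(\alpha)}:=K_{M_{q}}^{(\alpha)}-K_{C_{q}}$ has nonzero initial data $-\sum_{\lambda_{q,n}\le\alpha}\phi_{q,n}(x)\phi_{q,n}(y)$. I would choose $\varepsilon<\delta<1/2$ small enough that only the elliptic stabilizer contributes to $K_{M_{q}}(t,x,y)$ near $t=0$ on $C_{q,\delta}\times C_{q,\delta}$; then $(K_{M_{q}}-K_{C_{q}})(0,x,y)\equiv 0$ there and the Poisson-kernel identity of Remark \ref{Poissonkernelremark} applies,
\begin{align*}
(K_{M_{q}}-K_{C_{q}})(t,x,y)=\int_{0}^{t}\!\!\int_{\partial C_{q,\delta}}\!P_{q,\delta}(t-\sigma,x,\zeta)(K_{M_{q}}-K_{C_{q}})(\sigma,\zeta,y)\,d\rho(\zeta)\,d\sigma.
\end{align*}
Integrating over $x=y\in C_{q,\varepsilon}$, splitting the $\sigma$-integral at $t/2$, and applying Cauchy--Schwarz together with Lemma \ref{truncatedkernelsupbound} (boundary difference $\le Ce^{-c\sigma}$) and Lemma \ref{poissonkernellongtimebound} ($L^{2}$-norm of $P_{q,\delta}$ bounded by $Ce^{-(t-\sigma)/4}$) produces the convolution estimate $\int_{0}^{t}e^{-c\sigma}e^{-(t-\sigma)/4}\,d\sigma=O(e^{-ct})$, valid because $c<\alpha<1/4$; the boundary length $\sqrt{4\pi\delta/q+\delta^{2}}$ is uniformly bounded in $q$.

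The subtracted eigenfunction contribution, a solution of the heat equation on $C_{q,\delta}$ with smooth initial data $\sum_{\lambda_{q,n}\le\alpha}\phi_{q,n}(\cdot)\phi_{q,n}(\cdot)$, is handled by decomposing it into a Dirichlet-heat part and a Poisson-kernel part: Lemma \ref{fl2norminequality} combined with the spectral gap $\lambda\ge 1/4$ on $C_{q}$ bounds the former by $Ce^{-t/4}$, while the latter is treated exactly as in the preceding paragraph. The genuine delicacy here is uniform-in-$q$ control of this initial-data piece, which is supplied by the convergence of small eigenfunctions from Corollary \ref{smalleigenfunctions}. Summing the bounds from $(I^{(\alpha)})$, $(II^{(\alpha)})$ and $(III)$ gives the claimed exponential decay.
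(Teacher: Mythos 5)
Your overall strategy --- the three-integral decomposition, Lemma \ref{truncatedkernellimitlemma} for (I), Proposition \ref{thmintegraloverdifferencecusps} for (III), and a Poisson-kernel/Dirichlet-heat-kernel splitting for (II) --- is the same as the paper's, and your treatment of (I) and (III) is sound.

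The gap is in (II). You represent the \emph{untruncated} difference $(K_{M_{q}}-K_{C_{q}})(t,x,y)$ as a Poisson integral over $\partial C_{q,\delta}$ and then bound the boundary data by $Ce^{-c\sigma}$ citing Lemma \ref{truncatedkernelsupbound}. That lemma controls only the \emph{truncated} difference $K^{(\alpha)}_{M_{q}}-K^{(\alpha)}_{C_{q}}$; the untruncated boundary data contains the term $e^{-\lambda_{0,q}\sigma}\phi_{0,q}(\zeta)\phi_{0,q}(y)$ with $\lambda_{0,q}=0$, so it is only $O(1)$ in $\sigma$ and the convolution estimate $\int_{0}^{t}e^{-c\sigma}e^{-(t-\sigma)/4}\,d\sigma$ does not apply to it. Your proposed remedy --- treating the Poisson-kernel part of the subtracted eigenfunction sum ``exactly as in the preceding paragraph'' --- has the same defect: its boundary data $\sum_{\lambda_{q,n}\le\alpha}e^{-\lambda_{q,n}\sigma}\phi_{q,n}(\zeta)\phi_{q,n}(y)$ is likewise not $O(e^{-c\sigma})$, again because of the zero eigenvalue. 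Only the \emph{difference} of these two boundary data sets, namely $D^{(\alpha)}_{q}(\sigma,\zeta,y)$, decays exponentially. The paper sidesteps this by decomposing the truncated difference itself as $u+v$, where $v$ has zero initial data and boundary data equal to $D^{(\alpha)}_{q}$ (so Lemma \ref{truncatedkernelsupbound} genuinely applies, and the double Poisson-kernel convolution gives $O(e^{-ct})$), while $u$ vanishes on $\partial C_{q,\delta}$ and carries the eigenfunction sum as initial data. For $u$ the paper uses the spectral expansion of the Dirichlet heat kernel on $C_{q,\delta}$, positivity of the resulting sum of squares, the spectral gap $\lambda_{m}\ge 1/4$, and Buser's theorem to bound the number $N$ of small eigenvalues uniformly in $q$, obtaining $F_{\delta}(t)\le F_{\delta}(0)e^{-t/4}\le Ne^{-t/4}$; this supplies the uniform-in-$q$ control directly, without appealing to Corollary \ref{smalleigenfunctions}. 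All of your ingredients are present, but the decomposition must be reorganized so that the exponential boundary bound is applied to the truncated difference and to nothing else.
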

\begin{proof}
Our proof
consists of analyzing the three integrals labeled (I), (II), and
(III) coming from Theorem \ref{pointwiseanduniformconvergence}. For the
uniformity of integral (I) we use Lemma
\ref{truncatedkernellimitlemma} whereas for the integral (III) we
use Proposition \ref{thmintegraloverdifferencecusps}. In both cases, we obtain $O(e^{-ct})$. It remains to
consider integral (II). In this direction, let
$\{\lambda_{n,q}\}$ be the eigenvalues on $M_{q}$ which
converge to the eigenvalues on $M_{\infty}$ which are less than 1/4.
Let $\{\phi_{n,q}\}$ denote the corresponding eigenfunctions.
Since the small eigenvalues and corresponding eigenfunctions converge through degeneration (see Section \ref{Convergence of small eigenfunctions}), we have
that the sum
\begin{align*}
\sum_{\lambda_{n,q}<1/4} e^{-t\lambda_{n,q}}
\phi_{n,q}(x) \phi_{n,q}(y)
\end{align*}
varies continuously in $q$ as well as on the limiting surface
$M_{\infty}$. Fix $\delta <1/2$ and let $0 < \varepsilon < \delta$.
For $x,y \in C_{q,\delta}$ and $t>0$ consider the decomposition
\begin{align*}
K_{M_{q}}(t,x,y) - K_{C_{q}}(t,x,y) = u(t,x,y) + v(t,x,y)
+ \sum_{n=1}^{N} e^{-t\lambda_{n,q}} \phi_{n,q}(x)
\phi_{n,q}(y)
\end{align*}
where $u$ and $v$ are solutions to the homogeneous heat equation in
both $(t,x)$ and $(t,y)$ such that $u$ vanishes on $\partial
C_{q,\delta}$ and has appropriate initial values, whereas $v$
vanishes at the initial values and has appropriate values on
$\partial C_{q,\delta}$.

\hskip 0.2in Let $\alpha < 1/4$ be such that $M_{\infty}$ has no
eigenvalue in the interval $(\alpha,1/4)$. From above, we can write
\begin{align*}
K_{M_{q}}^{(\alpha)}(t,x,y) - K_{C_{q}}(t,x,y) = u(t,x,y)
+ v(t,x,y)
\end{align*}
and denote the left hand side of the equation above by
$D_{q}^{(\alpha)}(t,x,y)$.

\hskip 0.2in We will analyze first the function $v(t,x,y)$. On the one hand, the function $v$ satisfies the heat equation subject to the Dirichlet condition in the variables $(t,x)$.  For $\zeta \in \partial C_{q,\delta}$, we have that
\begin{align*}
v(t,\zeta,y) = D_{q}^{(\alpha)}(t,\zeta,y) - u(t,\zeta,y) = D_{q}^{(\alpha)}(t,\zeta,y) = f_1(t,\zeta)
\end{align*}
since the $u$ vanishes for $\zeta \in \partial C_{q,\delta}.$ Recalling the Remark \ref{Poissonkernelremark}, we then realize $v$  as a Poisson kernel, namely
\begin{align*}
v(t,x,y) &= \int_{0}^{t}\int_{\partial
C_{q,\delta}}
P_{q,\delta}(t-\sigma,x,\zeta) f_1(\sigma,\zeta) d\rho(\zeta) d\sigma\\
&= \int_{0}^{t}\int_{\partial
C_{q,\delta}}
P_{q,\delta}(t-\sigma,x,\zeta)  D_{q}^{(\alpha)}(\sigma,\zeta,y)d\rho(\zeta) d\sigma.
\end{align*}
On the other hand, $v$ satisfies the heat equation in the variables $(t,y)$. Noting that for $\xi \in \partial C_{q,\delta}$, we have
\begin{align*}
v(t,x,\xi)
= \int_{0}^{t}\int_{\partial
C_{q,\delta}}
P_{q,\delta}(t-\sigma,x,\zeta)  D_{q}^{(\alpha)}(\sigma,\zeta,\xi)d\rho(\zeta) d\sigma = f_2(t,\xi),
\end{align*}
we use the Remark \ref{Poissonkernelremark} again to get
\begin{align*}
v(t,x,y) &= \int_{0}^{t}\int_{\partial
C_{q,\delta}}
P_{q,\delta}(t-\tau,\xi,y) f_2(\tau,\xi) d\rho(\xi) d\tau\\
&= \int_{0}^{t}\int_{\partial
C_{q,\delta}}\int_{0}^{\tau} \int_{\partial C_{q,\delta}}
P_{q,\delta}(t-\tau,\xi,y)P_{q,\delta}(\tau-\sigma,x,\zeta)
D_{q}^{(\alpha)}(\sigma,\zeta,\xi) d\rho(\zeta) d\sigma d\rho(\xi)   d\tau.
\end{align*}
Then we look at the following integral
\begin{align*}
\int_{C_{q,\varepsilon}}v(t,x,x) d\mu(x).
\end{align*}
Using the supremum norm on the $D_{q}^{\alpha}$ as in Lemma
\ref{truncatedkernelsupbound}, the $L^2$-norm of the Poisson kernel
as in Lemma \ref{poissonkernellongtimebound}, and the Cauchy-Schwarz
inequality, we obtain the bound
\begin{align*}
\int_{C_{q,\varepsilon}}v(t,x,y) d\mu(x) \le C
\int_{0}^{t}\int_{o}^{\tau}
\exp{(-(t-\tau)/4)}\exp{(-(\tau-\sigma)/4)}\exp{(-c\sigma)}d\sigma
d\tau
\end{align*}
which is clearly $O(e^{-ct})$.

\hskip 0.2in Next we will look at the function $u(t,x,y)$. We
will express  $u$ in terms of the Dirichlet heat kernel on the domain
$C_{q,\epsilon}$. For fixed $y$, the function $u$
satisfies the heat equation in $(t,x)$ with zero boundary condition
and initial data given by
\begin{align}\label{u initialdata}
g(x,y) =\sum_{\lambda_{n,q}<1/4} \phi_{n,q}(x)
\phi_{n,q}(y).
\end{align}
This gives the following integral representation
\begin{align}\label{u integralrepresentation}
u(t,x,y)=\int_{C_{q,\delta}}
K_{q,\delta}^{D}(t,z,x)g(z,y)d\mu(z).
\end{align}
For fixed $x$, the integral representation in (\ref{u
integralrepresentation}) is a solution to the heat equation on
$C_{q,\delta}$ in $(t,y)$.  Then consider the function $u(\tau+t,x,y)$ which satisfies the heat equation in $(\tau, y)$, vanishes for $y\in \partial C_{q,\delta}$, and has initial value $u(t,x,y)$.
As such, we can write,
\begin{align}\label{u doubleintegralrepresentation}
\notag u(\tau + t,x,y)&= \int_{C_{q,\delta}}
K_{q,\delta}^{D}(\tau,w,y)u(t,x,w)d\mu(w)\\
&= \int_{C_{q,\delta}} K_{q,\delta}^{D}(\tau,w,y)\Bigg(
\int_{C_{q,\delta}}
K_{q,\delta}^{D}(t,z,x)g(z,w)d\mu(z)\Bigg)d\mu(w).
\end{align}
Using (\ref{u initialdata}) and (\ref{u
doubleintegralrepresentation}), we can further write
\begin{align}\label{u sumintegrals}
\notag u(\tau + t,x,y) &= \sum_{\lambda_{n,q}<1/4}
\int_{C_{q,\delta}} \int_{C_{q,\delta}}
K_{q,\delta}^{D}(\tau,w,y) K_{q,\delta}^{D}(t,z,x)
\phi_{n,q}(z) \phi_{n,q}(w) d\mu(z)d\mu(w)\\
&= \sum_{\lambda_{n,q}<1/4} \int_{C_{q,\delta}}
K_{q,\delta}^{D}(\tau,w,y)\phi_{n,q}(w)d\mu(w)
\int_{C_{q,\delta}} K_{q,\delta}^{D}(t,z,x)
\phi_{n,q}(z) d\mu(z).
\end{align}
In (\ref{u sumintegrals}), we set $x=y$ and $t=\tau$, so that we can
write
\begin{align}\label{u sumintegralssquare}
u(2t,x,x) &= \sum_{\lambda_{n,q}<1/4}
\Bigg(\int_{C_{q,\delta}} K_{q,\delta}^{D}(t,z,x)
\phi_{n,q}(z) d\mu(z)\Bigg)^2
\end{align}
where we used one variable of integration as opposed to two.
The next step is to analyze the integral
\begin{align*}
\int_{C_{q,\delta}} u(t,x,x) d\mu(x).
\end{align*}

Consider an  complete orthonormal system of eigenfunctions $\{\psi_m(x)\}$ of the Dirichlet
problem on $C_{q,\delta}$. Then, the heat kernel $K_{q,\delta}^{D}$ has the following expression
\begin{align*}
K_{q,\delta}^{D}(t,z,x) = \sum_{m=0}^{\infty} e^{-\lambda_m t} \psi_m(z) \psi_m(x).
\end{align*}
This allows us to write
\begin{align*}
\int_{C_{q,\delta}} K_{q,\delta}^{D}(t,x,z)
\phi_{n,q}(z) d\mu(z)= \sum_{m=1}^{\infty} a_{n,m}e^{-\lambda_m t}\psi_m(x),
\end{align*}
where the coefficients $a_{n,m}$ are given by
\begin{align*}
a_{n,m} = \int_{C_{q,\delta}} \psi_m(z) \phi_{n,q}(z) d\mu(z).
\end{align*}

By the positivity of (\ref{u sumintegralssquare}), we can write the
inequality
\begin{align*}
0 \le F_{\varepsilon}(t) = \int_{C_{q,\varepsilon}}u(t,x,x)
d\mu(x) \le \int_{C_{q,\delta}}u(t,x,x) d\mu(x) \le
F_{\delta}(t).
\end{align*}
It suffices to show that $F_{\delta}(t) \le C \exp{(-t/4)}$ for some
constant $C$ independent of $q$. Notice that we have the
equality
\begin{align}\label{f delta}
F_{\delta}(t) = \int_{C_{q,\delta}}u(t,x,x) d\mu(x) =
\sum_{\lambda_{n,q}<1/4} \sum_{m=1}^{\infty} a_{n,m}^2
e^{-\lambda_m t}.
\end{align}
Clearly, the above function $F_{\delta}(t)$ is monotone decreasing
in $t$. Let $N$ denote the integer that bounds the number of
eigenvalues on $M_{q}$ which are less than 1/4; such a
universal choice is possible by Buser's theorem (see page 251 of
\cite{Ch 84}). From (\ref{u sumintegrals}), we deduce that
$F_{\delta}(0)\le N$. Since $\lambda_m \ge 1/4$ for all $m$, we have
from (\ref{f delta}) that $F_{\delta}'(t) \le (-1/4) F_{\delta}(t)$.
We then integrate and obtain
\begin{align*}
F_{\delta}(t) \le F_{\delta}(0) e^{-t/4}.
\end{align*}
This in turn gives
\begin{align*}
0 \le F_{\varepsilon}(t) \le F_{\delta}(t) \le F_{\delta}(0)
e^{-t/4} \le N e^{-t/4}.
\end{align*}
Thus we have that
\begin{align*}
\int_{C_{q,\delta}} [ u(t,x,x) + v(t,x,x) ]\,\, d\mu(x) = O(e^{-ct})
\end{align*}
which means that integral (II) has the above bound.
\end{proof}

\textbf{Acknowledgments.}
The first author (DG) acknowledges support from a PSC-CUNY
grant. The second author (JJ) acknowledges support from grants from the
NSF and PSC-CUNY. We would also like to  thank the referee for providing a detailed list of comments. They have helped us improve the presentation of paper as well as
address some of the finer points.

%%%%%%%%%%%%%%%%%%%%%%%%%%%%%%%%%%%%%%%%%%%%%%%%%%%%%%%%%%%%%%%%%%%%%%%%%
\addcontentsline{toc}{section}{Bibliography}
\renewcommand\refname{Bibliography}

\vspace{5mm}\noindent
Daniel Garbin \\
Department of Mathematics and Computer Science \\
Queensborough Community College\\
222-05 56th Avenue \\
Bayside, NY 11364 \\
U.S.A. \\
e-mail: dgarbin@qcc.cuny.edu

\vspace{5mm} \noindent
Jay Jorgenson \\
Department of Mathematics \\
The City College of New York \\
Convent Avenue at 138th Street \\
New York, NY 10031\\
U.S.A. \\
e-mail: jjorgenson@mindspring.com

\end{document}